\newtheorem{theorem}{Theorem}[section]
\newtheorem{proposition}[theorem]{Proposition}
\newtheorem*{example}{Example}
\newtheorem{lemma}[theorem]{Lemma}
\newtheorem{obs}[theorem]{Observation}
\newtheorem{corollary}[theorem]{Corollary}
\newtheorem{definition}[theorem]{Definition}
\newtheorem{remark}[theorem]{Remark}
\newtheorem{thm}{Theorem} 
\newtheorem{lem}[thm]{Lemma} 
\newtheorem{prop}[thm]{Proposition} 
\newcommand{\alf}{{\underline{\alpha}}}
\newcommand{\betf}{{\underline{\beta}}}
\newcommand{\uf}{f}
\newcommand{\ug}{{g}}
\newcommand{\bfb}{{\bf b}}
\newcommand{\p}{{\bf p}}
\newcommand{\q}{{\bf q}}
\newcommand{\y}{{\bf y}}
\newcommand{\x}{{\bf x}}
\newcommand{\rrho}{{\bf \rho}}
\newcommand{\ssigma}{{\bf \sigma}}
\newcommand{\leqnomode}{\tagsleft@true\let\veqno\@@leqno}%
\newcommand{\reqnomode}{\tagsleft@false\let\veqno\@@eqno}%
\newcommand*{\compress}{\@minipagetrue}
\newtheorem*{claim}{Claim}
\newenvironment{indentedclaim}
  {\begin{quote}\begin{claim}}
  {\end{claim}\end{quote}}
\def\namedlabel#1#2{\begingroup
    #2%
    \def\@currentlabel{#2}%
    \phantomsection\label{#1}\endgroup
}
\title{Khintchine-type theorems for weighted uniform inhomogeneous approximations via transference principle}
\author{Vasiliy Neckrasov \\
        { \small Brandeis University,
        Waltham, MA 02453, USA} \\
        {\small \texttt{vneckrasov@brandeis.edu}}}
\date{}
\begin{document}

\maketitle

\begin{abstract}
    In \cite{kwad} Kleinbock and Wadleigh proved a "zero-one law" for uniform inhomogeneous Diophantine approximations. We generalize this statement with arbitrary weight functions and establish a new and simple proof of this statement, based on transference principle. We also give a complete description of the sets of $g$-Dirichlet pairs with a fixed matrix in this setup from Lebesgue measure point of view. As an application, we consider the set of badly approximable matrices and give a characterization of bad approximability in terms of inhomogeneous approximations. All the aforementioned metrical descriptions work (and sometimes can be strengthened) for weighted Diophantine approximations.
\end{abstract}

\vskip+0.5cm

\section{Introduction and main resluts}

{\it Throughout the text we mark our original statements by numbers (Lemma 1.1, Theorem 1.2, Proposition 2.3) and previously known statements by letters (Lemma A, Theorem B, Proposition C).}

\vskip+0.3cm

\subsection{The classical setup: Diophantine approximations of systems of affine forms}

Let $m$ and $n$ be positive integers and $d = m+n$. Let ${\bf M}_{n,m}$ be  the set of  all real $n \times m$ matrices. We will use the notation $| \cdot |$ for the supremum norm of a vector, and $|| \cdot ||$ for the distance to the nearest integer vector function. We will denote the $r$-dimensional Lebesgue measure of a set $E \subseteq \mathbb{R}^r$ by $m(E)$; the choice of dimension $r$ will be clear out of context, so we do not include it in the notation.

\vskip+0.2cm

The theory of uniform Diophantine approximation studies how small the value $|| \Theta \q ||$ can be for a given matrix $\Theta \in {\bf M}_{n,m}$ under certain restrictions on the size of the integer vector $\q$. The starting point of this area is the classical Dirichlet's theorem, which states that for any $\Theta \in {\bf M}_{n,m}$ and any real $T > 1$ the system of inequalities 

    $$
    \begin{cases}
    ||\Theta {\bf q}||^n \leq \frac{1}{T} \\
    |{\bf q}|^m \leq T
\end{cases}
    $$

has an integer solution $\q \in \mathbb{Z}^m$. More generally, one can replace the approximating function $f_1(T) = \frac{1}{T}$ (we will use the convention $f_a(T): = \frac{1}{T^a}$) by another non-increasing function $g: \,\,\, \mathbb{R}_+ \rightarrow \mathbb{R}_+$ and ask the same question about the system of inequalities 

    \begin{equation}\label{Dirichlet_hom}
    \begin{cases}
    ||\Theta {\bf q}||^n \leq g(T), \\
    |{\bf q}|^m \leq T.
\end{cases}
    \end{equation}

We call the matrix $\Theta$ {\it $g$-Dirichlet} if the system of inequalities \eqref{Dirichlet_hom} has a solution $\q \in \mathbb{Z}^m \setminus \{ \bf 0 \}$ for any $T \in \mathbb{R}_+$ {\it large enough}.

One can treat the quantity $\Theta \q$ as the vector of values that the system of linear forms 

$$
y_i = \theta_{i,1} x_1 + \ldots + \theta_{i,m} x_m, \,\,\,\,\,\,\,\, i = 1, \ldots, n
$$

attains on an integral $m$-tuple $\q$. A natural generalization for this setup is to consider affine forms 

$$
y_i = \theta_{i,1} x_1 + \ldots + \theta_{i,m} x_m - \eta_i, \,\,\,\,\,\,\,\, i = 1, \ldots, n
$$

instead. This requires the consideration of pairs $(\Theta, \pmb{\eta}) \in {\bf M}_{n,m} \times \mathbb{R}^n$ instead of matrices and motivates the following definition: we say that the pair $( \Theta, \pmb{\eta})$ is $g$-Dirichlet if the system of inequalities
    \begin{equation}
    \begin{cases}
        || \Theta  \q - \pmb{\eta} ||^n \leq g(T), \\
        |\q|^m \leq T
    \end{cases}
    \end{equation}
    
has a solution $\q \in \mathbb{Z}^m  \setminus \{ \bf 0 \}$ for any $T \in \mathbb{R}_+$ { large enough}.

Unlike the homogeneous case, not all the pairs are $f_1$-Dirichlet: moreover, it is not hard to see that in general the quantity $|| \Theta  q - \pmb{\eta} ||$ can even be bounded from below by a positive number. One can still, however, show that {\it Lebesgue almost all} pairs behave similarly to matrices and are $f_1$-Dirichlet. Moreover, in \cite{kwad}, Kleinbock and Wadleigh completely described the conditions under which the set of $g$-Dirichlet pairs has zero or full Lebesgue measure.

\begin{thm}\label{kwadresult}
    Let $g(T), \,\,\, T \geq 1$, be a non-increasing real valued function. The set of $g$-Dirichlet pairs has zero Lebesgue measure if the series

    $$
    \sum\limits_{l=1}^{\infty} \frac{1}{l^2 g(l)}
    $$

    diverges, and full measure if it converges.
\end{thm}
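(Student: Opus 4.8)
The plan is to deduce this zero–one law from a transference principle relating inhomogeneous $g$-Dirichlet pairs to a homogeneous dual condition, together with a Borel–Cantelli argument in the style of Khintchine's theorem. First I would set up the dual (transference) picture: asking whether $(\Theta,\pmb\eta)$ fails to be $g$-Dirichlet at a scale $T$ is, by a compactness/pigeonhole argument, equivalent to the existence of a nonzero integer solution of a companion system of inequalities for the transposed matrix $\Theta^{t}$ at a related scale, with the weights $m,n$ swapped and $g$ replaced by an associated function. Concretely, non-$g$-Dirichlet at infinitely many scales should translate into the inhomogeneous point $(\Theta,\pmb\eta)$ lying in infinitely many ``dangerous'' neighbourhoods whose total measure is governed by $\sum_l \tfrac{1}{l^2 g(l)}$. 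The key computation is to show that the Lebesgue measure of the set of pairs that are non-$g$-Dirichlet at a single scale $T\approx l$ is comparable (up to absolute constants depending only on $m,n$) to $\tfrac{1}{l^2 g(l)}$, after summing over the relevant integer vectors $\q$ with $|\q|^m\le l$.

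Next I would run the two halves of the zero–one law separately. For the \emph{convergence} direction I would show the set of non-$g$-Dirichlet pairs has measure zero: if $\sum_l \tfrac{1}{l^2 g(l)}<\infty$, then by the (easy, first) Borel–Cantelli lemma, almost every pair lies in only finitely many of the ``bad at scale $l$'' sets, hence is $g$-Dirichlet for all large $T$; here I only need upper bounds on the measures, which follow from covering each bad set by translates indexed by admissible $\q$. For the \emph{divergence} direction I would show the set of $g$-Dirichlet pairs has measure zero, equivalently that almost every pair fails the Dirichlet condition along an infinite sequence of scales. This is where I expect to use a quasi-independence estimate: one needs the bad sets at widely separated scales to be sufficiently independent so that the divergence Borel–Cantelli lemma applies. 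A natural way to get this is to exploit Fubini in the $\pmb\eta$ variable (for fixed $\Theta$ the condition is a shrinking-target statement for a torus translation, where the bad sets are genuine unions of boxes and near-independence at geometrically spaced scales is standard), and then integrate over $\Theta$.

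The main obstacle, I expect, is the divergence half — specifically establishing enough independence (or a second-moment/overlap estimate) among the ``non-$g$-Dirichlet at scale $l$'' events to invoke a divergence Borel–Cantelli lemma, since these events are highly correlated at nearby scales and $g$ is an arbitrary non-increasing function (so one cannot assume regularity). I would handle this by passing to a lacunary subsequence of scales $l_k$ (e.g.\ $l_k\approx 2^k$ or chosen adaptively from the divergence of the series), checking that the tail sum $\sum_k \tfrac{1}{l_k^2 g(l_k)}$ still diverges using monotonicity of $g$ (Cauchy condensation), and proving pairwise quasi-independence for this sparse family. A secondary technical point is taking proper care of the ``for all $T$ large enough'' quantifier and the reduction from a continuous parameter $T$ to integer scales $l$, which is routine given monotonicity of $g$ but must be done to make the Borel–Cantelli framework literally applicable. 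Once the transference reformulation and these measure estimates are in place, both directions follow from the appropriate Borel–Cantelli lemma.
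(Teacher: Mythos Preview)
Your proposal diverges from the paper's argument, and in its present form has a genuine gap.

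The paper does \emph{not} run Borel--Cantelli on ``non-$g$-Dirichlet at scale $l$'' events in $(\Theta,\pmb\eta)$-space. Instead it slices by $\Theta$ via Fubini and determines the \emph{entire} $\pmb\eta$-fibre using two transference statements at the level of asymptotic properties: (i) if $\Theta^{\top}$ is \emph{not} $f$-approximable (with $f$ and $g$ related by $g(T)=1/f^{-1}(1/T)$), then \emph{every} $\pmb\eta$ is $g$-Dirichlet (a Jarn\'{\i}k-type argument through dual lattices and Mahler's theorem on pseudo-compounds); (ii) if $\Theta^{\top}$ \emph{is} $f$-approximable, then along the approximating sequence $\y_\nu$ one gets $\|\pmb\eta\cdot\y_\nu\|\le (m+n)\delta$ for $\pmb\eta$ in the Dirichlet set, and a Sprindzhuk-type quasi-independence lemma on these hyperplane slabs shows this has $\pmb\eta$-measure zero. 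Khintchine--Groshev then decides which case holds for a.e.\ $\Theta$, and a separate lemma shows the Khintchine--Groshev series for $f$ and the series $\sum 1/(l^2 g(l))$ converge or diverge together. The Borel--Cantelli work is entirely outsourced to the homogeneous Khintchine--Groshev theorem.

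Your plan misfires in two places. First, the claim that ``non-$g$-Dirichlet at scale $T$ is equivalent to a companion system for $\Theta^{\top}$ at a related scale'' cannot be right as stated: the dual homogeneous system involves only $\Theta^{\top}$, not $\pmb\eta$, so there is no scale-by-scale equivalence --- transference only matches asymptotic properties (approximability of $\Theta^{\top}$ versus Dirichlet-ness of the pair), which is exactly how the paper uses it. Second, for fixed $\Theta$ the event ``non-$g$-Dirichlet at scale $T$'' is that $\pmb\eta$ lies in the \emph{complement} of a union of $\asymp T$ boxes of volume $\asymp g(T)$; this is a covering problem, not a shrinking-target problem, and its measure depends delicately on the distribution of $\{\Theta\q:|\q|^m\le T\}$ in the torus, i.e.\ on Diophantine properties of $\Theta$. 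It is not ``standard'' to get quasi-independence here, and the asserted measure $\asymp 1/(l^2 g(l))$ is not what one obtains. The paper sidesteps both issues by first sorting $\Theta$ by the approximability of $\Theta^{\top}$ and only then analysing the $\pmb\eta$-fibre.
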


A stronger version of this theorem, providing a zero-one law for Hausdorff measures, was later proven by Kim and Kim in \cite{kimkim}.

\vskip+0.3cm

\subsection{Approximations with general weight functions: definitions}\label{general def section}

We will now introduce a more general setup of Diophantine approximations with respect to general weight functions.

Let 

\begin{equation}\label{alphbetdef}
\alf = (\alpha_1, \ldots, \alpha_n) \,\,\,\,\,\,\,\,\text{and} \,\,\,\,\,\,\,\, \betf = ( \beta_1, \ldots, \beta_m)
\end{equation}

be the $n$-tuple and $m$-tuple of positive, increasing, continuous real valued functions, defined on the interval $[1, +\infty)$ and having infinite limit an infinity. It will be convenient to assume that $\alpha_i(1) = \beta_j(1) = 1$. One can extend these functions to the whole set of nonnegative real arguments by defining
\begin{equation}\label{star_condition}
    \alpha_i(T) = \frac{1}{\alpha_i \left( \frac{1}{T} \right)} \,\,\,\,\,\,\,\,\,\,\,\, \text{and} \,\,\,\,\,\,\,\,\,\,\,\, \beta_j(T) = \frac{1}{\beta_j \left( \frac{1}{T} \right)}
\end{equation}

for $0 < T \leq 1$, and, by continuity, $\alpha_i(0) = \beta_j(0) = 0$. We will refer to these functions as {\it weight functions}; throughout the paper, the functions $\alpha_i$ and $\beta_j$ will always be defined as above, and in particular satisfy the condition \eqref{star_condition}.

For $\y = (y_1, \ldots, y_n)$ and $\x = (x_1, \ldots, x_m)$, we write

$$
|\y|_{\alf} = \max\limits_{1 \leq i \leq n} \alpha_i^{-1}(|y_i|) \,\,\,\,\,\,\,\,\,\,\,\, \text{and} \,\,\,\,\,\,\,\,\,\,\,\, |\x|_{\betf} = \max\limits_{1 \leq j \leq m}\beta_j^{-1}(|x_j|).
$$

One can define $| \cdot |_{\alf}$ and $| \cdot |_{\betf}$ analogously. By $|| \cdot ||_{\alf}$ and $|| \cdot ||_{\betf}$ we will denote the corresponding "distance to the nearest integer" functions, that is, 

$$
||\y||_{\alf} = \min\limits_{\p \in \mathbb{Z}^n} |\y - \p|_{\alf} \,\,\,\,\,\,\,\,\,\,\,\, \text{and} \,\,\,\,\,\,\,\,\,\,\,\, ||\x||_{\betf} = \min\limits_{\q \in \mathbb{Z}^n} |\x - \q|_{\betf}.
$$

We will study homogeneous and inhomogeneous approximation with respect to $| \cdot |_{\alf}$ and $| \cdot |_{\betf}$ instead of supremum norms. Let $f$ and $g$ be two functions $\mathbb{R}_+ \rightarrow \mathbb{R}_+$, decreasing to zero.

\begin{definition}

\begin{enumerate}
    \item We say that the matrix $\Theta^\top$ is $(\uf, \betf, \alf)$-approximable if the inequality
    \begin{equation}
        || \Theta^\top \y ||_{\betf} \leq f(|\y|_{\alf})
    \end{equation}

    has infinitely many solutions $\y \in \mathbb{Z}^n  \setminus \{ \bf 0 \}$. We denote the set of $(\uf, \betf, \alf)$-approximable $m \times n$ matrices by ${\bf W}_{m,n}[\uf; \betf, \alf]$.

    \item We say that the pair $( \Theta, \pmb{\eta})$ is $(\ug, \alf, \betf)$-Dirichlet if the system of inequalities
    \begin{equation}
    \begin{cases}
        || \Theta  \q - \pmb{\eta} ||_{\alf} \leq g(T), \\
        |\q|_{\betf} \leq T
    \end{cases}
    \end{equation}

    has a solution $\q \in \mathbb{Z}^m  \setminus \{ \bf 0 \}$ for any $T \in \mathbb{R}_+$ large enough. We denote the set of $(\ug, \alf, \betf)$-Dirichlet pairs by $\widehat{\bf D}_{n,m}[\ug; \alf, \betf]$. By $\widehat{\bf D}_{n,m}^{\Theta}[\ug; \alf, \betf]$ we denote the set
    $$
    \{ \pmb{\eta}: \,\,\, ( \Theta, \pmb{\eta}) \in \widehat{\bf D}_{n,m}[\ug; \alf, \betf] \},
    $$

    that is, the section of the set $\widehat{\bf D}_{n,m}[\ug; \alf, \betf]$ for a fixed $\Theta$.
\end{enumerate}
\end{definition}

The definitions above directly generalize the classical definitions of $\psi$-approximable matrices (see \cite{KM99}, Section 8.1) and $\psi$-Dirichlet pairs (\cite{kwad}, Section 1.2); the notation $\widehat{\bf D}_{n,m}^{\Theta}$ was used in \cite{MN25}. We note that some authors use the name {\it uniformly approximable} (see \cite{CGGMS20}, Definition 2.3) or {\it uniform} (\cite{KMWW}, Definition 1.1) instead of Dirichlet. 

We can rewrite these definitions explicitly. We use the notation $\Theta_i$ for $i$-th row of the matrix $\Theta$, and $(\Theta^{\top})_j$ for $j$-th row of the matrix $\Theta^{\top}$.

\begin{enumerate}
    \item The matrix $\Theta^{\top}$ is $(\uf, \betf, \alf)$-approximable if the system of inequalities

    $$
    \begin{cases}
        ||\left(\Theta^{\top}\right)_j \y|| \leq \beta_j (f(T)), \,\,\,\,\,\,\,\, j = 1, \ldots, m ; \\
        |y_i| \leq \alpha_i(T), \,\,\,\,\,\,\,\,\,\,\,\,\,\,\,\,\,\,\,\,\,\,\,\,\,\,\,\,\,\,\,\,\,\,\,\,\,\,\,\, i = 1, \ldots, n
    \end{cases}
    $$

    has a solution $\y = (y_1, \ldots, y_n) \in \mathbb{Z}^n$ for an unbounded set of $T \in \mathbb{R}_+$.

    \item The pair $( \Theta, \pmb{\eta})$ is $(\ug, \alf, \betf)$-Dirichlet if the system of inequalities

    $$
    \begin{cases}
        ||\Theta_i \q - \eta_i || \leq \alpha_i (g(T)), \,\,\,\,\,\,\,\, i = 1, \ldots, n ; \\
        |q_j| \leq \beta_j(T), \,\,\,\,\,\,\,\,\,\,\,\,\,\,\,\,\,\,\,\,\,\,\,\,\,\,\,\,\,\,\,\,\,\,\,\,\,\, j = 1, \ldots, m 
    \end{cases}
    $$

    has a solution $\q = (q_1, \ldots, q_m) \in \mathbb{Z}^m$ for any $T \in \mathbb{R}_+$ large enough.
\end{enumerate}

\vskip+0.3cm

The most well-studied special case of this setup is approximations with weights, which can be obtained by assuming $\alpha_i$ and $\beta_j$ are powers of the argument $T$. We will discuss this setup later in Section \ref{weightssection}. A non-trivial example of functions $\alpha_i$ and $\beta_j$ for which our main theorems hold is shown in Section \ref{3examplesection}.

\subsection{Quasimultiplicativity}

Before formulating our results, we will introduce technical conditions we need to put on the functions $\alpha_i$ and $\beta_j$. Call an increasing function $h: \,\,\, \mathbb{R}_+ \rightarrow \mathbb{R}_+$ {\it quasimultiplicative} if there exists an $M > 1$ such that for all $|k| \gg 1$ one has

    \begin{equation}\label{quasidefinition1}
    c_1 h(M^k) \leq h(M^{k+1}) \leq c_2 h(M^k),
    \end{equation}

    where $1 < c_1 \leq c_2$ are some absolute constants.

    We analogously define quasimultiplicativity for decreasing functions, assuming that $c_1 \leq c_2 < 1$.
    
\vskip+0.3cm

Let us note that we can choose the parameters $c_1, c_2$ and $M$ uniformly for any finite collection of quasimultiplicative functions.

We can reformulate the definition as follows: a monotonic function $h: \,\,\, \mathbb{R}_+ \rightarrow \mathbb{R}_+$ is quasimultiplicative if and only if there exist two constants $k_1$ and $k_2$, either both strictly positive or both strictly negative, such that for any $T \in \mathbb{R}_+$ and for any $R$ large enough 

\begin{equation}\label{quasireformulation}
    R^{k_1} h(T) \leq h(RT) \leq R^{k_2} h(T).
\end{equation}

Using \eqref{quasireformulation}, the following can be directly checked:

\begin{obs}\label{quasiproperties}
\begin{itemize}
    \item If $h$ and $r$ are two quasimultiplicative functions, either both increasing or both decreasing, then $h + r$ and $h \cdot r$ are also quasimultiplicative functions;
    \item If $h$ is an increasing (decreasing) quasimultiplicative function, then $\frac{1}{h}$ is a decreasing (increasing) quasimultiplicative function.
    \item If $h$ and $r$ are both increasing or both decreasing quasimultiplicative functions, then $h \circ r$ and $r \circ h$ are increasing quasimultiplicative functions. If one of the two function is increasing and one is decreasing, then $h \circ r$ and $r \circ h$ are decreasing quasimultiplicative functions.
    \item If $h$ is quasimultiplicative, its inverse $h^{-1}$ is quasimultiplicative.
    \item if $h$ is a quasimultiplicative function for which 
    $$
    h(T) = \frac{1}{h \left( \frac{1}{T} \right)} \,\,\,\,\,\,\,\,\, \text{for all} \,\,\, T \in \mathbb{R}_+,
    $$
    then it is enough to check condition \eqref{quasidefinition1} only for positive or only for negative values of $k$. In particular, this holds true for the functions $\alpha_i$ and $\beta_j$ satisfying \eqref{star_condition}.
\end{itemize}
\end{obs}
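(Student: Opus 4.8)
The plan is to deduce every item of the observation from the multiplicative reformulation \eqref{quasireformulation}; the only preparation needed is a two-sided version of it. If a monotonic $h$ satisfies \eqref{quasireformulation} with exponents $k_1\le k_2$ of a common sign and all $R$ beyond some threshold, then substituting $T\mapsto T/R$ gives $R^{-k_2}h(T)\le h(T/R)\le R^{-k_1}h(T)$, i.e.\ a bound of the same type for scaling factors close to $0$. So from now on I may use \eqref{quasireformulation} for every scaling factor bounded away from $1$ on either side, absorbing the resulting loss into the (harmless) absolute constants. Throughout, ``comparable'' means up to positive multiplicative constants independent of the arguments, and I allow myself to shrink the interval of admissible $k$ when convenient.

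Sum, product and reciprocal are immediate. If $h$ has exponents $(k_1,k_2)$ and $r$ has exponents $(l_1,l_2)$ --- necessarily all of the same sign, since $h$ and $r$ are both increasing or both decreasing --- then multiplying the two chains in \eqref{quasireformulation} yields $R^{k_1+l_1}(hr)(T)\le (hr)(RT)\le R^{k_2+l_2}(hr)(T)$ with exponents of that sign, and adding them, together with the elementary bound $R^{\min(a,b)}(x+y)\le R^a x+R^b y\le R^{\max(a,b)}(x+y)$ for $x,y>0$, gives the statement for $h+r$. Taking reciprocals in \eqref{quasireformulation} for $h$ gives $R^{-k_2}\,\tfrac{1}{h(T)}\le \tfrac{1}{h(RT)}\le R^{-k_1}\,\tfrac{1}{h(T)}$, so $1/h$ is quasimultiplicative with exponents of the opposite sign, which is precisely the asserted reversal of monotonicity.

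For the composition $h\circ r$ (and symmetrically $r\circ h$), feed $r$'s estimates into $h$'s monotonicity: from $R^{l_1}r(T)\le r(RT)\le R^{l_2}r(T)$ and, say, $h$ increasing, one gets $h(R^{l_1}r(T))\le h(r(RT))\le h(R^{l_2}r(T))$; since $R^{l_1},R^{l_2}$ are scaling factors bounded away from $1$ (on the side dictated by the sign of the $l_i$), the preliminary two-sided form of \eqref{quasireformulation} for $h$ sandwiches $h(R^{l_i}r(T))$ between two powers of $R^{l_i}$ times $h(r(T))$, hence between two powers of $R$ times $(h\circ r)(T)$. Running through the four sign patterns of ``$h$ increasing/decreasing'' against ``$r$ increasing/decreasing'' one checks that the resulting exponents come out positive precisely when $h$ and $r$ have the same monotonicity and negative otherwise, and that $h\circ r$ is then correspondingly increasing or decreasing --- matching the claim. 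The inverse $h^{-1}$ is handled by putting $T=h^{-1}(S)$ into \eqref{quasireformulation}: $h(R\,h^{-1}(S))$ lies between $R^{k_1}S$ and $R^{k_2}S$, so applying the monotone $h^{-1}$ and renaming $R^{k_i}$ as the new scaling factor shows $h^{-1}$ satisfies \eqref{quasireformulation} with exponents $1/k_2,1/k_1$, of the same sign as $k_1,k_2$.

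Finally, the last bullet --- which is what actually lets one verify \eqref{quasidefinition1} in practice. If $h(T)=1/h(1/T)$ for all $T$ and \eqref{quasidefinition1} holds for all large positive $k$, then applying $x\mapsto 1/x$ to $c_1h(M^k)\le h(M^{k+1})\le c_2h(M^k)$ and using $h(M^{-j})=1/h(M^{j})$ turns it into $c_1 h(M^{\ell})\le h(M^{\ell+1})\le c_2 h(M^{\ell})$ at the exponent $\ell=-k-1$; hence \eqref{quasidefinition1} for large positive and for large negative $k$ are equivalent, and the remark applies verbatim to $\alpha_i,\beta_j$ via \eqref{star_condition}. The only step requiring any care is the composition item: one must make sure that after replacing $R$ by $R^{l_i}$ inside $h$ the scaling factor still falls in the range where $h$'s quasimultiplicativity is available --- precisely the reason for isolating the two-sided restatement at the start --- after which the whole observation is bookkeeping with exponents and their signs.
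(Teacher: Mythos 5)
Your proof is correct and follows exactly the route the paper indicates: the observation is stated to follow by direct verification from the reformulation \eqref{quasireformulation}, and you carry out that verification, including the necessary two-sided extension of \eqref{quasireformulation} for small scaling factors and the sign bookkeeping in the composition case. The paper itself omits the details, so your filled-in argument is consistent with it.
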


\begin{example}
    Let us define a function $h$ for $T \gg 1$ by $h(T) = T^{a_0} \left( \log T \right)^{a_1} \ldots \left( \log \ldots \log T \right)^{a_s}$ where $a_0 > 0$. It is easy to see that any such function can be extended to a quasimultiplicative one, and construct other examples using properties above.

    A nontrivial example is described in Section \ref{3examplesection}.

\end{example}

Quasimultiplicativity is a natural condition to assume for the weight functions; see for example Theorem 2.7 from \cite{KW23}, which is a generalization of the classical Khintchine-Groshev theorem (\cite{G38}). Here we give a version of this statement, formulated in terms of the weight functions $\alf$ and $\betf$ described above:

\begin{thm}\label{general_Khgr}
    Let $f(T), \,\,\, T \geq 1$ be a non-increasing real valued function. Let $\alf, \betf$ be defined as in Section \ref{general def section}, and suppose the functions $\beta_j$ are quasimultiplicative. The set ${\bf W}_{m,n}[\uf; \betf, \alf]$ has zero measure if the series

    \begin{equation}\label{khgr series}
    \sum\limits_{k=1}^{\infty} k^{-1} \prod\limits_{j=1}^m \beta_j(f(k)) \cdot \prod\limits_{i=1}^n \alpha_i(k)
    \end{equation}

    converges, and full measure if it diverges.
\end{thm}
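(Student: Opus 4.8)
The plan is to obtain Theorem~\ref{general_Khgr} from the weighted Khintchine--Groshev theorem, Theorem~2.7 of \cite{KW23}, by matching the two setups. Unwinding the explicit reformulation of $(\uf,\betf,\alf)$-approximability given in Section~\ref{general def section}, a matrix $\Theta^{\top}\in{\bf M}_{m,n}$ is $(\uf,\betf,\alf)$-approximable exactly when there are infinitely many $\y\in\mathbb{Z}^n\setminus\{{\bf 0}\}$ with $\|(\Theta^{\top})_j\y\|\le\Psi_j(|\y|_{\alf})$ for all $j=1,\dots,m$ simultaneously, where $\Psi_j:=\beta_j\circ\uf$ and the height of $\y$ is measured by $|\y|_{\alf}$, equivalently by the box $|y_i|\le\alpha_i(T)$. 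This is precisely the (weighted, vector-valued) approximability condition treated by Theorem~2.7 of \cite{KW23}, with approximating functions $\Psi_1,\dots,\Psi_m$ and weight data $\alf$; so the first task is to set up this identification carefully.

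Granting the translation, two things remain. First, the hypotheses of Theorem~2.7 of \cite{KW23} must be verified: each $\Psi_j=\beta_j\circ\uf$ is non-increasing (since $\uf$ is non-increasing and $\beta_j$ increasing), and the regularity needed on the weight side is supplied by quasimultiplicativity of the $\beta_j$ and of the $\alpha_i$, Observation~\ref{quasiproperties} ensuring that the quasimultiplicative class is stable under the products and compositions that arise. Second, the series \eqref{khgr series} must be shown to have the same convergence behaviour as the series occurring in Theorem~2.7 of \cite{KW23}; I would carry this out by $M$-adic (Cauchy) condensation. The number of $\y\in\mathbb{Z}^n$ with $|\y|_{\alf}\in(M^l,M^{l+1}]$ is, up to absolute constants, $\prod_{i=1}^n\alpha_i(M^l)$, and on such a block $\prod_{j=1}^m\Psi_j(|\y|_{\alf})$ is pinched between $\prod_j\beta_j(\uf(M^{l+1}))$ and $\prod_j\beta_j(\uf(M^l))$; since $\sum_{k\in(M^l,M^{l+1}]}k^{-1}\asymp\log M\asymp1$, the weight $k^{-1}$ in \eqref{khgr series} is exactly what turns the cumulative count $\prod_i\alpha_i(k)$ into the count in a single block. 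Matching the resulting blocked sums $\sum_l\prod_j\beta_j(\uf(M^l))\prod_i\alpha_i(M^l)$, using quasimultiplicativity to absorb the shift $M^l\leftrightarrow M^{l+1}$ and monotonicity of $\uf$ to make the pinching harmless, yields the equivalence.

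For completeness one can also see the convergence (zero-measure) half directly: for $\y\ne{\bf 0}$ the set of $\Theta^{\top}$ in a fundamental domain of ${\bf M}_{m,n}$ with $\|(\Theta^{\top})_j\y\|\le\beta_j(\uf(|\y|_{\alf}))$ for all $j$ has measure $\asymp\prod_{j=1}^m\beta_j(\uf(|\y|_{\alf}))$ — the $m$ rows are independent and each row pushes Haar measure forward to Haar measure on the circle, and the constraint is active because only finitely many $\y$ lie in any $\alf$-box, so $|\y|_{\alf}\to\infty$ along any infinite solution set and $\uf(|\y|_{\alf})\to0$ — whence convergence of \eqref{khgr series}, equivalently of $\sum_{\y}\prod_j\beta_j(\uf(|\y|_{\alf}))$, makes the $\limsup$ set null by the Borel--Cantelli lemma. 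The divergence (full-measure) half is the substantive one: a second Borel--Cantelli fails since the sets $A_{\y}$ are far from independent, and the known proofs proceed through quasi-independence on average / ubiquity and ultimately homogeneous dynamics on the space of unimodular lattices — this is exactly the content of Theorem~2.7 of \cite{KW23}. I therefore expect the main obstacle to lie not in the measure estimates but in the bookkeeping of the series comparison for genuinely non-power weight functions, and in confirming that Theorem~2.7 of \cite{KW23} applies in the full generality of Section~\ref{general def section}, with quasimultiplicativity of the weight functions as the regularity input that makes the condensation legitimate.
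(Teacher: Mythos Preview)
Your approach matches the paper's: Theorem~\ref{general_Khgr} is not given an independent proof in the paper but is presented as a reformulation of Theorem~2.7 of \cite{KW23}, exactly as you propose, and the paper then uses it as a black box in the proof of Theorem~\ref{main for pairs}. One small caution: the hypothesis of Theorem~\ref{general_Khgr} requires quasimultiplicativity only of the $\beta_j$, not of the $\alpha_i$, so in your condensation/bookkeeping step you should check that the series comparison goes through with the $\alpha_i$ merely continuous and increasing to infinity (as in Section~\ref{general def section}), or else confirm that the form of the series in \cite{KW23} already coincides with \eqref{khgr series} without needing that extra regularity.
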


\subsection{Main result for pairs}

The main goal of this paper is to generalize Theorem \ref{kwadresult} to the setup with weight functions and provide a short and simple proof for this generalized 0-1 law. More precisely, we will show that the following statement is equivalent to Theorem \ref{general_Khgr}:

\begin{theorem}\label{main for pairs}
    Let $g(T), \,\,\, T \geq 1$ be a non-increasing real valued function. Let $\alpha_i, \beta_j, \,\, i = 1, \ldots, n; \,\,j = 1, \ldots, m$, defined as in Section \ref{general def section}, be quasimultiplicative functions. The set $\widehat{\bf D}_{n,m}[\ug; \alf, \betf]$ has zero measure if the series

    \begin{equation}\label{kwseries}
    \sum\limits_{l=1}^{\infty} \frac{1}{l \cdot \prod\limits_{j=1}^m \beta_j(l) \cdot \prod\limits_{i=1}^n \alpha_i(g(l))}
    \end{equation}

    diverges, and full measure if it converges.
\end{theorem}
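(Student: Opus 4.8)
The plan is to show that Theorem~\ref{main for pairs} and Theorem~\ref{general_Khgr} are two faces of the same statement, linked by a transference between uniform inhomogeneous approximation of the pair $(\Theta,\pmb{\eta})$ and ordinary (asymptotic) homogeneous approximation of the transposed matrix $\Theta^{\top}$, and then Fubini's theorem. To $\ug$ we attach the \emph{dual function}
\[
\uf^{*}(T)=\frac{1}{g^{-1}(1/T)},
\]
where $g^{-1}$ is a generalized inverse of the non-increasing function $\ug$; then $\uf^{*}$ is non-increasing and tends to $0$, so Theorem~\ref{general_Khgr} applies to it. The first step is the elementary observation that, after the substitution $k\leftrightarrow 1/g(l)$ and using $\alpha_i(1/T)=1/\alpha_i(T)$, $\beta_j(1/T)=1/\beta_j(T)$ from \eqref{star_condition}, the general term $k^{-1}\prod_j\beta_j(\uf^{*}(k))\prod_i\alpha_i(k)$ of the Khintchine--Groshev series \eqref{khgr series} attached to $\uf^{*}$ becomes $g(l)\big(\prod_j\beta_j(l)\prod_i\alpha_i(g(l))\big)^{-1}$; a comparison on dyadic blocks (one integration by parts, exactly as for $m=n=1$) then shows that series \eqref{khgr series} for $\uf^{*}$ converges if and only if series \eqref{kwseries} does. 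Since $\ug\mapsto\uf^{*}$ is an involution up to such equivalences, the same device will let us read Theorem~\ref{general_Khgr} off Theorem~\ref{main for pairs}, so it suffices to prove one implication.

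The core is a transference lemma: for Lebesgue-almost every $\Theta$, the section $\widehat{\bf D}_{n,m}^{\Theta}[\ug;\alf,\betf]$ is co-null if $\Theta^{\top}\notin{\bf W}_{m,n}[\uf^{*};\betf,\alf]$, and null if $\Theta^{\top}\in{\bf W}_{m,n}[\uf^{*};\betf,\alf]$. Fix $T$ and rescale: with $u_{\Theta}=\left(\begin{smallmatrix}I_n&\Theta\\0&I_m\end{smallmatrix}\right)$ and $g_{T}=\mathrm{diag}\big(\alpha_1(g(T))^{-1},\dots,\alpha_n(g(T))^{-1},\beta_1(T)^{-1},\dots,\beta_m(T)^{-1}\big)$, the pair $(\Theta,\pmb{\eta})$ is $\ug$-Dirichlet at time $T$ precisely when the grid $g_{T}u_{\Theta}\mathbb{Z}^{n+m}-g_{T}(\pmb{\eta},\mathbf{0})$ hits the unit cube at a point with non-zero last $m$ coordinates; for $\pmb{\eta}\notin\mathbb{Z}^{n}$ and $T$ large the latter requirement is automatic. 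The dual of $g_{T}u_{\Theta}\mathbb{Z}^{n+m}$ is the lattice of the vectors $\big(\alpha_i(g(T))p_i,\ \beta_j(T)((-\Theta^{\top}\p)_j+q_j)\big)$, $\p\in\mathbb{Z}^{n}$, $\q\in\mathbb{Z}^{m}$, and it has a non-zero vector of sup-norm $\lesssim 1$ exactly when there is $\p\in\mathbb{Z}^{n}\setminus\{\mathbf{0}\}$ with $|p_i|\lesssim\alpha_i(g(T))^{-1}$ and $\|(\Theta^{\top})_j\p\|\lesssim\beta_j(T)^{-1}$ for all $i,j$. Putting $S=1/g(T)$, using \eqref{star_condition}, and absorbing the implied constants into $\uf^{*}$ by quasimultiplicativity of the $\alpha_i$ and $\beta_j$ (this alters $\uf^{*}$ only by a multiplicative constant, hence does not affect convergence of \eqref{khgr series} nor, by Theorem~\ref{general_Khgr}, the measure of the associated approximable set), this last condition reads $|\p|_{\alf}\lesssim S$, $\|\Theta^{\top}\p\|_{\betf}\lesssim\uf^{*}(S)$, i.e.\ it is exactly a witness for $\Theta^{\top}\in{\bf W}_{m,n}[\uf^{*};\betf,\alf]$. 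Therefore, if $\Theta^{\top}\notin{\bf W}_{m,n}[\uf^{*};\betf,\alf]$ then for all large $T$ the dual lattice has no short vector, so Minkowski's theorem together with the classical transference inequality between the covering radius of a lattice and the first minimum of its dual forces the grid to meet the unit cube for \emph{every} $\pmb{\eta}$; consequently $\widehat{\bf D}_{n,m}^{\Theta}[\ug;\alf,\betf]\supseteq\mathbb{R}^{n}\setminus\mathbb{Z}^{n}$ is co-null.

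For the converse inclusion one argues that every witness $\p$ of $\Theta^{\top}\in{\bf W}_{m,n}[\uf^{*};\betf,\alf]$ forces, at a suitably chosen time $T$ inside the range of validity of the corresponding short dual vector, the point set $\{\Theta\q\bmod\mathbb{Z}^{n}:\ |\q|_{\betf}\le T\}$ to concentrate near a family of parallel rational slabs normal to $\p$, leaving a ``hole'' of measure bounded below by a positive constant, and the set of $\pmb{\eta}$ for which $(\Theta,\pmb{\eta})$ is not $\ug$-Dirichlet at that time contains this hole. As the $\p$'s are distinct integer vectors of unbounded size in essentially distinct primitive directions, the corresponding holes are quasi-independent in $\mathbb{T}^{n}$, so the divergence Borel--Cantelli lemma (in the form used for the divergence half of Theorem~\ref{general_Khgr}) shows that almost every $\pmb{\eta}$ lies in infinitely many of them, whence the section is null. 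Finally, transposition preserves Lebesgue measure, so Fubini turns the transference lemma into: $m\big(\widehat{\bf D}_{n,m}[\ug;\alf,\betf]\big)$ is full (resp.\ zero) exactly when $m\big({\bf W}_{m,n}[\uf^{*};\betf,\alf]\big)=0$ (resp.\ full), i.e.\ by Theorem~\ref{general_Khgr} exactly when series \eqref{khgr series} for $\uf^{*}$ --- equivalently series \eqref{kwseries} --- converges (resp.\ diverges). Reading this chain backwards, together with the involutivity of $\ug\mapsto\uf^{*}$, recovers Theorem~\ref{general_Khgr} from Theorem~\ref{main for pairs}.

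\textbf{Where the work is.} The substantial point is the transference lemma, and especially its divergence half. On the convergence side one must invoke the classical transference inequality with its correct dimension-dependent constant and check that this constant can be hidden in $\uf^{*}$ via quasimultiplicativity of $\alf,\betf$ without disturbing \eqref{khgr series}; on the divergence side a single good approximation of $\Theta^{\top}$ only produces a hole of measure bounded below by a constant rather than close to $1$, so one genuinely needs to choose the time inside the validity range with care and then run a quasi-independence (converse Borel--Cantelli) argument for the rational slabs on $\mathbb{T}^{n}$, all while keeping track of the $\q\ne\mathbf{0}$ (resp.\ $\p\ne\mathbf{0}$) constraints, of the generalized inverse when $\ug$ is merely non-increasing, and of the discretization $T=M^{k}$ when $\ug$ fails to be quasimultiplicative.
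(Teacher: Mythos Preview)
Your plan is correct and matches the paper's proof: the series equivalence is the paper's Lemma~4.1 (done by linearizing $\alpha,\beta$ on a geometric grid and one integration by parts), the transference lemma is split in the paper into Theorem~1.3 (the covering-radius/dual-first-minimum inequality you cite is the paper's Lemma~E(2)) and Theorem~1.4 (the transference identity $\pmb{\eta}\cdot\y_\nu=\sum_j q_j(\Theta^\top)_j\y_\nu-\sum_i(\Theta_i\q-\eta_i)y_{\nu,i}$ yields your ``slabs'' $\|\pmb{\eta}\cdot\y_\nu\|\le(m+n)\delta$, and the holes are handled by Sprindzhuk's converse Borel--Cantelli), the constants are absorbed exactly as you say, and the passage from continuous strictly decreasing $g$ to general non-increasing $g$ is done by sandwiching between auxiliary functions. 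One small correction: the quasi-independence of the holes does not come from ``essentially distinct primitive directions'' but from passing to a subsequence with $|\y_{\nu+1}|/|\y_\nu|\to\infty$, which is what actually drives the claim in the paper's Lemma~4.2.
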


\vskip+0.3cm

Theorem \ref{kwadresult} is now a special case of Theorem \ref{main for pairs} with $\alpha_i(T) = T^{\frac{1}{n}}$ and $\beta_j(T) = T^{\frac{1}{m}}$.

\vskip+0.3cm

\subsection{Results for the fixed matrix (twisted) case}

To prove Theorem \ref{main for pairs}, we will need two statements about the sets ${\widehat{\bf D}^{\Theta} [g; \alf, \betf]}$, which we believe are of a separate interest. 

The setup of Diophantine approximations of pairs $(\Theta, \pmb{\eta})$ with a fixed matrix $\Theta$ is also known as {\it twisted} Diophantine approximation (see \cite{Ha12}), and the metric theory of twisted asymptotic Diophantine approximations was studied before; see for example \cite{HW24} for weighted setup and a recent work \cite{BSV25} studying vectors on certain manifolds. Less has been done for the case of uniform twisted approximations; see some recent progress in \cite{MN25}.

It is clear that the structure of the sets ${\widehat{\bf D}^{\Theta} [g; \alf, \betf]}$ depends on some Diophantine properties of the matrix $\Theta$. A more specific relation is given by the transference principle --- a classical technique, introduced almost a century ago by Khintchine (\cite{khint}) and adapted for inhomogeneous problems by Jarnik (\cite{j39, j41, j46}). In our case, the transference principle provides a connection between uniform inhomogeneous approximations of pairs $(\Theta, \pmb{\eta})$ and asymptotic homogeneous approximations of the transposed matrix $\Theta^{\top}$.

Hereafter we will often assume that the functions $f$ and $g$ are strictly decreasing, continuous and related by 

\begin{equation}\label{fgrelation}
    g(T) = \frac{1}{f^{-1} \left( \frac{1}{T} \right)}.
\end{equation}

Our first statement about "slices" can be thought of as an analog of Dirichlet's theorem for the fixed matrix case:

\begin{theorem}\label{Jarnik_general}
    Let $\alpha_i, \beta_j, \,\, i = 1, \ldots, n; \,\,j = 1, \ldots, m$, defined as in Section \ref{general def section}, be some increasing continuous functions. Suppose $f$ is such a continuous, strictly decreasing function that $\Theta^T \notin {\bf W}_{m,n}[\uf; \betf, \alf]$, and let $\uf$ and $\ug$ be related by \eqref{fgrelation}. Let $C = C_d: = d \cdot d!$, where $d = n+m$. Then, 

    $$
    \widehat{\bf D}_{n,m}^{\Theta}\left[\ug; C \alf, C\betf \right]  = \mathbb{R}^n.
    $$             

\end{theorem}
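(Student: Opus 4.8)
The plan is to recast the statement as a lattice covering problem and then apply classical transference (Minkowski together with Mahler's inequality for dual lattices). Fix $\pmb{\eta}\in\mathbb{R}^n$; by the explicit form of the definition it suffices to produce, for every sufficiently large $S$, a pair $\q\in\mathbb{Z}^m\setminus\{\mathbf 0\}$, $\p\in\mathbb{Z}^n$ with $|(\Theta\q-\p)_i-\eta_i|\le C\alpha_i(g(S))$ for $i\le n$ and $|q_j|\le C\beta_j(S)$ for $j\le m$. First I would set $T:=1/g(S)$, which is large since $g\to 0$, and use \eqref{fgrelation} to get $f(T)=1/S$ and $g(S)=1/T$, and then \eqref{star_condition} to rewrite $\beta_j(f(T))=1/\beta_j(S)$ and $\alpha_i(T)=1/\alpha_i(g(S))$. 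I would then introduce the unimodular lattice $\Lambda=\{(\Theta^{\top}\y-\z,\y):\y\in\mathbb Z^n,\z\in\mathbb Z^m\}\subset\mathbb R^{m+n}$ and the symmetric box $\Pi$ with half-sides $\beta_j(f(T))$ on the first $m$ coordinates and $\alpha_i(T)$ on the last $n$; writing the generators of $\Lambda$ as a matrix and noting it is an involution shows that the dual lattice is $\Lambda^*=\{(-\z,\Theta\z+\y):\z\in\mathbb Z^m,\y\in\mathbb Z^n\}$, which — after swapping the two coordinate blocks and a sign change — is exactly the lattice attached to the inhomogeneous problem for $(\Theta,\pmb\eta)$, and under the same identification the reciprocal box $\widetilde\Pi$ of $\Pi$ (half-sides $1/a_k$) becomes precisely the box $\{\,|(\Theta\q-\p)_i|\le\alpha_i(g(S)),\ |q_j|\le\beta_j(S)\,\}$. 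Hence the theorem reduces to the covering statement $\Lambda^*+C\widetilde\Pi=\mathbb R^{m+n}$, together with a check that forces $\q\ne\mathbf 0$.

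Next I would feed in the hypothesis $\Theta^{\top}\notin{\bf W}_{m,n}[\uf;\betf,\alf]$: it says that for all $T$ large no $\y\in\mathbb Z^n\setminus\{\mathbf 0\}$ satisfies $\|(\Theta^{\top})_j\y\|\le\beta_j(f(T))$, $|y_i|\le\alpha_i(T)$, and since $\beta_j(f(T))<1$ for large $T$ the remaining lattice points of $\Lambda$ (those with $\y=\mathbf 0$) also avoid $\Pi$. Thus $\Lambda\cap\Pi=\{\mathbf 0\}$ for all large $T$, in particular for $T=1/g(S)$ with $S$ large; in particular $\lambda_1(\Pi,\Lambda)\ge1$, and by the compactness form of Minkowski's convex body theorem $\mathrm{vol}(\Pi)<2^{d}$, where $d:=m+n$.

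The transference step is where the constant $C=d\cdot d!$ enters. Let $\Pi^{\circ}$ be the polar body of $\Pi$; since $\Pi$ is a box, $\Pi^{\circ}\subseteq\widetilde\Pi$. Mahler's inequality relating the successive minima of a symmetric convex body and of its polar with respect to the dual lattice gives $\lambda_1(\Pi,\Lambda)\cdot\lambda_d(\Pi^{\circ},\Lambda^*)\le d!$, so $\lambda_d(\Pi^{\circ},\Lambda^*)\le d!$. Combining this with the elementary bound $\mu(K,\Lambda^*)\le\tfrac d2\,\lambda_d(K,\Lambda^*)$ for the covering radius of a symmetric convex body $K$ (cover the full-rank sublattice spanned by vectors realizing the successive minima, then pass to the larger lattice $\Lambda^*$), I would obtain $\mu(\widetilde\Pi,\Lambda^*)\le\mu(\Pi^{\circ},\Lambda^*)\le\tfrac12\,d\cdot d!<C$, i.e.\ $C\widetilde\Pi+\Lambda^*=\mathbb R^{d}$; the factor $2$ of slack is more than enough.

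Finally I would apply the covering to $\xi=(\mathbf 0,\pmb\eta)$, obtaining $w=(-\q,\Theta\q-\p)\in\Lambda^*$ with $w-\xi\in C\widetilde\Pi$, i.e.\ $|q_j|\le C\beta_j(S)$ and $|(\Theta\q-\p)_i-\eta_i|\le C\alpha_i(g(S))$. If $\pmb\eta\notin\mathbb Z^n$, say $\|\eta_{i_0}\|=\delta>0$, then for $S$ large $C\alpha_{i_0}(g(S))<\delta$, which is incompatible with the constraint in coordinate $i_0$ when $\q=\mathbf 0$ (as then $|(\Theta\q-\p)_{i_0}-\eta_{i_0}|=|p_{i_0}+\eta_{i_0}|\ge\delta$), so necessarily $\q\ne\mathbf 0$ and we are done. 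If $\pmb\eta\in\mathbb Z^n$ the system is equivalent to the homogeneous one for $\Theta$, and here $\mathrm{vol}(C\widetilde\Pi)=C^{d}2^{2d}/\mathrm{vol}(\Pi)>(2C)^{d}>2^{d}$, so Minkowski's theorem already puts a nonzero point of $\Lambda^*$ in $C\widetilde\Pi$, whose $\q$-part cannot vanish for $S$ large (otherwise its $\p$-part would be a nonzero integer vector with entries less than $1$ in absolute value). Letting $S$ range over all large parameters gives $\pmb\eta\in\widehat{\bf D}_{n,m}^{\Theta}[\ug;C\alf,C\betf]$, and since $\pmb\eta$ was arbitrary, $\widehat{\bf D}_{n,m}^{\Theta}[\ug;C\alf,C\betf]=\mathbb R^n$. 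The main obstacle is the transference step: one must invoke Mahler's successive-minima inequality for polar bodies and dual lattices (or reprove an equivalent estimate by hand) and keep the dimensional constants below $d\cdot d!$; everything else is bookkeeping, above all the use of \eqref{star_condition} and \eqref{fgrelation} to turn the homogeneous box for $\Theta^{\top}$ into the inhomogeneous box for $(\Theta,\pmb\eta)$.
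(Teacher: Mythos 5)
Your proposal is correct and follows essentially the same route as the paper. The paper encapsulates the transference step in Lemma~\ref{lem21}~\ref{mahler}, which uses Mahler's bound on successive minima with pseudo-compound parallelepipeds $\mathcal{P}^*$ (with half-sides $L/\lambda_i$), then covers by observing that $d$ independent lattice points in $\tfrac{d!}{L}\mathcal{P}$ force $d\cdot\tfrac{d!}{L}\mathcal{P}$ to contain a fundamental domain; you instead work directly with the polar cross-polytope $\Pi^\circ$ and Jarn\'{\i}k's covering-radius estimate $\mu\le\tfrac d2\lambda_d$, which lands at the same constant $C=d\cdot d!$ with a factor of two to spare. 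You also work in the unscaled picture (fixed lattice $\Lambda$, $T$-dependent box $\Pi$) while the paper works in the scaled picture (fixed cube $\mathcal{B}_d$, $T$-dependent lattice $a_{\alf,\betf}\Lambda$); these are equivalent via the diagonal operators $a_{\alf,\betf}$. Your explicit treatment of the $\q\neq\mathbf 0$ check, split by whether $\pmb\eta\in\mathbb{Z}^n$, is more spelled out than the paper's appeal to Observation~\ref{geom_sets}, but logically identical.
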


The next statement, closely following Theorem 3 from \cite{MN25}, is a twisted analog of the classical theorem by Davenport and Schmidt (\cite{DS1, DS2}): it says that the set of "$\varepsilon$-Dirichlet improvable" pairs $(\Theta, \pmb{\eta})$ for a fixed $\Theta$ has Lebesgue measure zero. 

We use the notation $\circ$ to denote the composition of functions; in particular, $\ug \circ \frac{1}{\varepsilon} (T) = g(\frac{1}{\varepsilon} T)$.

\begin{theorem}\label{weighted_impr}
    Suppose $f$ is such a continuous, strictly decreasing function that $\Theta^T \in {\bf W}_{m,n}[\uf; \betf, \alf]$, and let $\uf$ and $\ug$ be related by \eqref{fgrelation}. Suppose all the functions $\alpha_i, \beta_j$, defined as in Section \ref{general def section}, are quasimultiplicative. Let $\tilde{g} = \tilde{g}_{\varepsilon}: = \varepsilon \cdot \ug \circ \frac{1}{\varepsilon}$. If $\varepsilon$ is small enough, then the set $\widehat{\bf D}_{n,m}^{\Theta}[\tilde{g}; \alf, \betf]$ has Lebesgue measure zero.
\end{theorem}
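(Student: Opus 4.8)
The plan is to convert the $(f,\betf,\alf)$-approximability of $\Theta^{\top}$ into infinitely many linear inequalities that every $\pmb{\eta}\in\widehat{\bf D}_{n,m}^{\Theta}[\tilde{g};\alf,\betf]$ must satisfy, via a transference pairing, and then to kill the resulting set by a Lebesgue density argument. Since $\Theta^{\top}\in{\bf W}_{m,n}[f;\betf,\alf]$, I fix pairwise distinct $\y_k\in\mathbb{Z}^n\setminus\{{\bf 0}\}$, $k\ge 1$, with $\|\Theta^{\top}\y_k\|_{\betf}\le f(s_k)$, where $s_k:=|\y_k|_{\alf}$; pairwise distinctness forces $|\y_k|\to\infty$, hence $s_k\to\infty$. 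I fix $\p_k\in\mathbb{Z}^m$ realizing $\|\Theta^{\top}\y_k\|_{\betf}$, so that $|(\Theta^{\top}\y_k-\p_k)_j|\le\beta_j(f(s_k))$ for all $j$, while $|y_{k,i}|\le\alpha_i(s_k)$ for all $i$.

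Now take any $(\Theta,\pmb{\eta})\in\widehat{\bf D}_{n,m}^{\Theta}[\tilde{g};\alf,\betf]$. For every large $T$, pick a solution $\q=\q(T)\in\mathbb{Z}^m\setminus\{{\bf 0}\}$ of the $\tilde{g}$-Dirichlet system and ${\bf r}\in\mathbb{Z}^n$ realizing $\|\Theta\q-\pmb{\eta}\|_{\alf}$, so that $|q_j|\le\beta_j(T)$ and $|(\Theta\q-\pmb{\eta}-{\bf r})_i|\le\alpha_i(\tilde{g}(T))$. Expanding
\[
\langle\y_k,\pmb{\eta}\rangle=\langle\Theta^{\top}\y_k,\q\rangle-\langle\y_k,\Theta\q-\pmb{\eta}\rangle,
\]
peeling off the integer $\langle\p_k,\q\rangle-\langle\y_k,{\bf r}\rangle$ and bounding the two remaining bilinear terms coordinatewise, one obtains
\[
\bigl\|\langle\y_k,\pmb{\eta}\rangle\bigr\|\;\le\;\sum_{j=1}^{m}\beta_j(f(s_k))\,\beta_j(T)\;+\;\sum_{i=1}^{n}\alpha_i(s_k)\,\alpha_i(\tilde{g}(T)).
\]
Then I calibrate the scale by taking $T=T_k:=\varepsilon/f(s_k)$: then $T_k\to\infty$, so the Dirichlet property of $(\Theta,\pmb{\eta})$ does apply at $T=T_k$ once $k$ is large (with a threshold depending on $\pmb{\eta}$), and by \eqref{fgrelation} one gets $\tilde{g}(T_k)=\varepsilon\,g(1/f(s_k))=\varepsilon/s_k$ (using $g(1/f(s_k))=1/f^{-1}(f(s_k))=1/s_k$). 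Using the normalization \eqref{star_condition} to rewrite $\beta_j(f(s_k))=1/\beta_j(1/f(s_k))$ and $\alpha_i(\varepsilon/s_k)=1/\alpha_i(s_k/\varepsilon)$, and applying quasimultiplicativity in the form \eqref{quasireformulation} with scaling factor $1/\varepsilon$, each summand $\beta_j(f(s_k))\,\beta_j(T_k)$ and $\alpha_i(s_k)\,\alpha_i(\tilde{g}(T_k))$ is bounded by a power $\varepsilon^{c}$ with $c>0$ depending only on the corresponding $\alpha_i$ resp.\ $\beta_j$. Hence, for all large $k$,
\[
\bigl\|\langle\y_k,\pmb{\eta}\rangle\bigr\|\;\le\;C(\varepsilon),
\]
where $C(\varepsilon)\to 0$ as $\varepsilon\to 0^{+}$; I then fix $\varepsilon$ small enough that $C(\varepsilon)<\tfrac14$ (and small enough for \eqref{quasireformulation} to apply to all the $\alpha_i,\beta_j$).

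It remains to estimate measure. Both $\widehat{\bf D}_{n,m}^{\Theta}[\tilde{g};\alf,\betf]$ and each condition $\|\langle\y_k,\pmb{\eta}\rangle\|\le C(\varepsilon)$ are $\mathbb{Z}^n$-periodic in $\pmb{\eta}$, so it suffices to prove $\widehat{\bf D}_{n,m}^{\Theta}[\tilde{g};\alf,\betf]\cap[0,1)^n$ is Lebesgue-null. By the previous step it is contained in $\bigcup_{k_0\ge1}\bigcap_{k\ge k_0}\{\pmb{\eta}\in[0,1)^n:\|\langle\y_k,\pmb{\eta}\rangle\|\le C(\varepsilon)\}$, a countable union of Borel sets, so it is enough to see each one is null. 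Since $|\y_k|\to\infty$, by pigeonhole some coordinate $i^{*}$ satisfies $|y_{k,i^{*}}|\to\infty$ along an infinite set $K\subseteq\mathbb{N}$. Restricting the intersection to $k\in K$ and applying Fubini in the $\eta_{i^{*}}$-variable, the claim reduces to the one-dimensional fact that, for any fixed values of the remaining coordinates, $\bigcap_{k\in K}\{\eta_{i^{*}}\in[0,1):\|y_{k,i^{*}}\eta_{i^{*}}+c_k\|\le C(\varepsilon)\}$ is null. This is a Lebesgue density argument: each set in the intersection is $1/|y_{k,i^{*}}|$-periodic of measure $2C(\varepsilon)<\tfrac12$, so on any fixed interval $I$ the $k$-th set occupies a fraction at most $2C(\varepsilon)+o(1)<\tfrac34$ as soon as $|y_{k,i^{*}}|$ is large compared with $1/|I|$, whence no point of $I$ can be a density point of the intersection.

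The crux is the calibration step. A single dual vector $\y$ only traps $\pmb{\eta}$ in a set of positive measure — the trade-off between the two sums above has a positive minimum over $T$ — so one genuinely needs the whole sequence $(\y_k)$, and the point is that, after balancing $T_k$ against $s_k$ via \eqref{fgrelation} and invoking quasimultiplicativity, the pairing bound becomes simultaneously \emph{uniform in $k$} and \emph{vanishing as $\varepsilon\to0$}. Once that constant is below $\tfrac14$, the concluding density/Fubini argument is routine.
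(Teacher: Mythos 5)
Your proposal is correct. The transference step — pairing $\pmb{\eta}\cdot\y_k = \langle\Theta^{\top}\y_k,\q\rangle - \langle\y_k,\Theta\q-\pmb{\eta}\rangle$, calibrating $T_k = \varepsilon/f(s_k)$ so that $\tilde g(T_k)=\varepsilon/s_k$, and using \eqref{star_condition} together with quasimultiplicativity to squeeze each summand below $\varepsilon^{k_1}$ — is exactly the argument the paper uses in its Lemma on transference (the paper packages the $\varepsilon$-to-$\delta$ conversion a bit differently via the quantity $\varepsilon(\delta)$ in \eqref{eps(delta)}, but the substance is identical). Where you genuinely diverge is in the measure step. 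The paper replaces $\y_k$ by a subsequence with $|\y_{\nu+1}|/|\y_\nu|\to\infty$, establishes quasi-independence of the complements $\Omega_\nu^c$ (three dyadic-cube observations plus the second–moment divergence lemma of Sprind\v{z}huk), and concludes that $m\bigl(\bigcap_N\bigcup_{\nu\ge N}\Omega_\nu^c\bigr)=1$. You instead pass to a single coordinate $i^*$ along which $|y_{k,i^*}|\to\infty$, slice by Fubini, and run a Lebesgue-density argument on the one-dimensional traps $\{\eta_{i^*}:\|y_{k,i^*}\eta_{i^*}+c_k\|\le C(\varepsilon)\}$: each is a $1/|y_{k,i^*}|$-periodic union of intervals of relative density $2C(\varepsilon)<1$, and once $|y_{k,i^*}|$ dominates $1/|I|$ no point of $I$ can have full density in the intersection. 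This is more elementary — no growth assumption on the sequence, no quasi-independence estimates, no Sprind\v{z}huk lemma — and suffices for the nullity conclusion the theorem asks for. What the Sprind\v{z}huk route buys the paper is a cleaner way to quantify measure (it shows $m(\limsup\Omega_\nu^c)\ge 1/(1+\lambda)$ for every $\lambda$), but for Theorem~\ref{weighted_impr} itself your density approach is a valid and simpler substitute. One minor remark: the inequality $C(\varepsilon)<\tfrac14$ is more than what the density argument needs (any $2C(\varepsilon)<1$ would do), while the paper's $\delta<\tfrac1{2(n+m)}$ is needed so that the $\Omega_\nu^c$ have positive measure; both are consistent with the theorem's "if $\varepsilon$ is small enough'' phrasing.
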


\begin{remark}
    The statement of Theorem \ref{weighted_impr}, as well as Proposition \ref{DI with weights}, holds also for $\pmb{\eta}$ restricted on certain {\it non-exceptional} affine subspaces of $\mathbb{R}^n$; almost all the affine subspaces are non-exceptional. The discussion of exceptional subspaces can be found in \cite{MN25}; the proof is analogous. 
\end{remark}

\vskip+0.3cm

\subsection{A special case: approximations with weights}\label{weightssection}

A straightforward application of our results is the classical setup of Diophantine approximations with weights.

Let $\rho_1, \ldots, \rho_n$ and $\sigma_1, \ldots, \sigma_m$ be an $n$-tuple and $m$-tuple of positive real numbers satisfying 

\begin{equation}\label{weightsdef}
\sum\limits_{i=1}^n \rho_i = \sum\limits_{j=1}^m \sigma_j = 1.
\end{equation}

These tuples define the quasinorms $| \cdot |_{\rrho}$ and $| \cdot |_{\ssigma}$ on $\mathbb{R}^n$ and $\mathbb{R}^m$ by

$$
| {\bf y} |_{\rrho} = \max\limits_{i=1, \ldots, n} |y_i|^{\frac{1}{\rho_i}} \,\,\,\,\,\,\,\,\,\,\,\,\,\,\,\, \text{and} \,\,\,\,\,\,\,\,\,\,\,\,\,\,\,\, | {\bf x} |_{\ssigma} = \max\limits_{j=1, \ldots, m} |x_j|^{\frac{1}{\sigma_j}}.
$$

We will also denote by $|| \cdot ||_{\rrho}$ and $|| \cdot ||_{\ssigma}$ the distance to the nearest integer functions, corresponding to $| \cdot |_{\rrho}$ and $| \cdot |_{\ssigma}$. The subject of inhomogeneous Diophantine approximations with weights is the solvability of systems 

\begin{equation}\label{weighted_system}
    \begin{cases}
        || \Theta  \q - \pmb{\eta} ||_{\rrho} \leq g(T), \\
        |\q|_{\ssigma} \leq T
    \end{cases}
    \end{equation}

of inequalities in integral vectors $\q \in \mathbb{Z}^m$. We say that a pair $(\Theta, \pmb{\eta})$ is { \it $(g; \rrho, \ssigma)$-Dirichlet } if the system \ref{weighted_system} has a solution $\q \in \mathbb{Z}^m$ for any $T$ large enough.
Abusing the notation, we will denote the set of $(g; \rrho, \ssigma)$-Dirichlet pairs by ${\widehat{\bf D} [g; \rrho, \ssigma]}$ when it is clear that $\rrho$ and $\ssigma$ are weights, not functions; we will use the notation ${\widehat{\bf D}^{\Theta} [g; \rrho, \ssigma]}$ for sections with a fixed matrix $\Theta$.

\vskip+0.3cm

    We say that the matrix $\Theta^T$ is $(f, \ssigma, \rrho)$-approximable if the inequality
    \begin{equation}
        || \Theta^T y ||_{\ssigma} \leq f(|y|_{\rrho})
    \end{equation}

    has infinitely many solutions $y \in \mathbb{Z}^n$. Analogously, we denote the set of $(f, \ssigma, \rrho)$-approximable $m \times n$ matriecs by ${\bf W}_{m,n}\left[f, \ssigma, \rrho \right]$, and the sections with a fixed $\Theta$ by ${\bf W}_{m,n}^{\Theta}\left[f, \ssigma, \rrho \right]$.

\vskip+0.3cm

By choosing $\alpha_i(T): = T^{\rho_i}$ and $\beta_j(T): = T^{\sigma_j}$ in Theorem \ref{main for pairs}, we obtain the weighted version of Theorem \ref{kwadresult}:

\begin{proposition}\label{kwad_weighted}
    Let $g(T), \,\,\, T \geq 1$ be a non-increasing real valued function. The set of $(g, \rrho, \ssigma)$-Dirichlet pairs has zero Lebesgue measure if the series

    $$
    \sum\limits_{l=1}^{\infty} \frac{1}{l^2 g(l)}
    $$

    diverges, and full measure if it converges.
\end{proposition}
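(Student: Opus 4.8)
The plan is to obtain Proposition \ref{kwad_weighted} as a direct specialization of Theorem \ref{main for pairs}. First I would set $\alpha_i(T) := T^{\rho_i}$ for $i = 1, \ldots, n$ and $\beta_j(T) := T^{\sigma_j}$ for $j = 1, \ldots, m$, and check that these functions fit the framework of Section \ref{general def section}: each is positive, continuous, strictly increasing on $[1, +\infty)$, equals $1$ at $T = 1$, and tends to infinity; moreover $T^{\rho_i} = 1/(1/T)^{\rho_i}$, so the extension \eqref{star_condition} holds automatically. One also verifies that $| \cdot |_{\rrho}$ and $| \cdot |_{\ssigma}$ coincide with $| \cdot |_{\alf}$ and $| \cdot |_{\betf}$, since $\alpha_i^{-1}(s) = s^{1/\rho_i}$ and $\beta_j^{-1}(s) = s^{1/\sigma_j}$; consequently a pair $(\Theta, \pmb{\eta})$ is $(g, \rrho, \ssigma)$-Dirichlet if and only if it lies in $\widehat{\bf D}_{n,m}[\ug; \alf, \betf]$, which is immediate from the explicit reformulations given right after the Definition.

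Next I would verify quasimultiplicativity. For a power function $h(T) = T^a$ with $a > 0$ one has $h(RT) = R^a h(T)$ for all $R, T \in \mathbb{R}_+$, so \eqref{quasireformulation} holds with $k_1 = k_2 = a > 0$; hence each $\alpha_i$ and each $\beta_j$ is quasimultiplicative. Thus all hypotheses of Theorem \ref{main for pairs} are met (the function $g$ is only assumed non-increasing both there and in Proposition \ref{kwad_weighted}, so no extra regularity is needed).

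Finally I would compute the series \eqref{kwseries} in this case. Using $\sum_{j=1}^m \sigma_j = 1$ and $\sum_{i=1}^n \rho_i = 1$,
$$
\prod_{j=1}^m \beta_j(l) = \prod_{j=1}^m l^{\sigma_j} = l, \qquad \prod_{i=1}^n \alpha_i(g(l)) = \prod_{i=1}^n g(l)^{\rho_i} = g(l),
$$
so that
$$
\sum_{l=1}^{\infty} \frac{1}{l \cdot \prod_{j=1}^m \beta_j(l) \cdot \prod_{i=1}^n \alpha_i(g(l))} = \sum_{l=1}^{\infty} \frac{1}{l^2 g(l)}.
$$
Therefore the convergence/divergence of \eqref{kwseries} coincides with the convergence/divergence of $\sum_{l} 1/(l^2 g(l))$, and the zero/full-measure dichotomy of Proposition \ref{kwad_weighted} follows at once from Theorem \ref{main for pairs}.

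There is essentially no obstacle here: the substantive content is entirely contained in Theorem \ref{main for pairs}, and the only points deserving attention are the bookkeeping identifications above — that the power weight functions satisfy the normalization \eqref{star_condition} and are quasimultiplicative, and that the weighted quasinorms $| \cdot |_{\rrho}, | \cdot |_{\ssigma}$ literally agree with the general ones $| \cdot |_{\alf}, | \cdot |_{\betf}$ under the substitution, so that the two notions of ``Dirichlet pair'' coincide.
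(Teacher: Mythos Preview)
Your proposal is correct and is exactly the approach taken in the paper: the paper obtains Proposition \ref{kwad_weighted} in one line by substituting $\alpha_i(T) = T^{\rho_i}$ and $\beta_j(T) = T^{\sigma_j}$ into Theorem \ref{main for pairs}, and your write-up simply spells out the routine verifications (quasimultiplicativity, the normalization \eqref{star_condition}, the identification of the quasinorms, and the computation reducing \eqref{kwseries} to $\sum_l 1/(l^2 g(l))$) that make this substitution legitimate.
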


We note that for the functions $g$, satisfying additional technical restrictions, the statement of Proposition \ref{kwad_weighted} can also be obtained as a direct corollary of Theorem 1.6 from \cite{He25}.

\vskip+0.3cm

The version of Theorem \ref{Jarnik_general} for weights is not new; it can be found in \cite{CGGMS20} as Lemma 2.4. 

\begin{prop}\label{lem24}
     Suppose $f$ is such a continuous, strictly decreasing function that $\Theta^{\top}$ is not $(f, \ssigma, \rrho)$-approximable, and let $\uf$ and $\ug$ be related by \eqref{fgrelation}. For $C$ large enough, any pair $(\Theta, \pmb{\eta})$ is $(C \cdot \ug \circ \frac{1}{C}, \rrho, \ssigma)$-Dirichlet, that is, 

    $$
    \widehat{\bf D}_{n,m}^{\Theta}[C \cdot \ug \circ \frac{1}{C}; \rrho, \ssigma]  = \mathbb{R}^n.
    $$   
\end{prop}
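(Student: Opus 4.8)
The plan is to deduce Proposition~\ref{lem24} directly from Theorem~\ref{Jarnik_general} by specializing the weight functions. Set $\alpha_i(T) := T^{\rho_i}$ and $\beta_j(T) := T^{\sigma_j}$; these are increasing, continuous, satisfy $\alpha_i(1) = \beta_j(1) = 1$, obey the condition \eqref{star_condition} since $T^{\rho_i} = 1/(1/T)^{\rho_i}$, and (being pure powers) are quasimultiplicative, so Theorem~\ref{Jarnik_general} applies. Under this choice one checks that $| \cdot |_{\alf}$ coincides with $| \cdot |_{\rrho}$ and $| \cdot |_{\betf}$ with $| \cdot |_{\ssigma}$, hence $(\uf, \betf, \alf)$-approximability of $\Theta^\top$ is literally $(\uf, \ssigma, \rrho)$-approximability, and the $(\ug, \alf, \betf)$-Dirichlet property of a pair is the $(\ug; \rrho, \ssigma)$-Dirichlet property. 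So the hypothesis "$\Theta^\top$ not $(f,\ssigma,\rrho)$-approximable" is exactly "$\Theta^\top \notin {\bf W}_{m,n}[\uf;\betf,\alf]$", and Theorem~\ref{Jarnik_general} gives
$$
\widehat{\bf D}_{n,m}^{\Theta}[\ug; C\alf, C\betf] = \mathbb{R}^n
$$
with $C = d \cdot d!$.

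The only remaining point is to translate the scaled weight functions $C\alf = (C\alpha_1, \dots, C\alpha_n)$ and $C\betf$ back into a statement about a rescaled $g$ with the \emph{unscaled} weights $\rrho, \ssigma$. Here I would unwind the explicit form of the $(\ug; C\alf, C\betf)$-Dirichlet condition: it asks for $\q \in \mathbb{Z}^m \setminus \{{\bf 0}\}$ with $\|\Theta_i \q - \eta_i\| \le C\alpha_i(g(T)) = C\,g(T)^{\rho_i}$ and $|q_j| \le C\beta_j(T) = C\,T^{\sigma_j}$ for all $i,j$. Writing $T = S^{1/C'}$... more cleanly: substitute $T \mapsto \tfrac{1}{C}T$ as the running parameter. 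Then $|q_j| \le C \beta_j(\tfrac1C T) = C (\tfrac1C)^{\sigma_j} T^{\sigma_j}$, and since $\sum_j \sigma_j = 1$ a further harmless enlargement of the constant absorbs the factors; the cleanest route is to observe that $C\alpha_i = \alpha_i \circ (C^{1/\rho_i}\,\cdot\,)$ type identities let one rewrite $C\alpha_i(g(T))$ and $C\beta_j(T)$ as $\alpha_i$ and $\beta_j$ evaluated at rescaled arguments, then use that $|q|_{\ssigma} \le T' \iff |q_j| \le (T')^{\sigma_j}$ for all $j$. Carrying this bookkeeping through shows that $(\Theta,\pmb\eta)$ being $(\ug; C\alf, C\betf)$-Dirichlet is equivalent to $(\Theta,\pmb\eta)$ being $(C' \cdot \ug \circ \tfrac{1}{C'}; \rrho, \ssigma)$-Dirichlet for a suitable constant $C'$ depending only on $C$ and the weights (one may simply take $C'$ large enough, since enlarging the Dirichlet constant only makes the system easier to solve). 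Combined with the previous display this yields $\widehat{\bf D}_{n,m}^{\Theta}[C'\cdot \ug\circ\tfrac{1}{C'}; \rrho,\ssigma] = \mathbb{R}^n$, which is the claim.

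The main obstacle — really the only non-cosmetic point — is the constant-chasing in this last translation: one must verify that the pointwise scaling of each weight function by the single constant $C$ corresponds, after the change of variable $T \mapsto T/C'$, to scaling both the target function $g$ and the height budget by a common constant, uniformly in the exponents $\rho_i, \sigma_j$. Because all $\alpha_i, \beta_j$ are honest powers here (unlike the general quasimultiplicative case), these manipulations are exact identities up to choosing $C'$ generously, so no genuine analytic difficulty arises; it is purely a matter of writing the inequalities in both normalizations and matching them. I would state this as a short lemma ("for pure-power weights, $(\ug; C\alf, C\betf)$-Dirichlet $\iff$ $(C'\ug\circ\tfrac1{C'}; \rrho,\ssigma)$-Dirichlet for appropriate $C'$") and then the proposition follows in one line from Theorem~\ref{Jarnik_general}.
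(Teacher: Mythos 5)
Your approach is correct and, I would say, is exactly what the paper has in mind: the paper labels Proposition~\ref{lem24} by a letter (indicating a known statement, cited from~\cite{CGGMS20}) and states in the surrounding text that Theorem~\ref{Jarnik_general} generalizes it, so the intended derivation is precisely to specialize the general theorem to the pure-power weight functions $\alpha_i(T)=T^{\rho_i}$, $\beta_j(T)=T^{\sigma_j}$. Your identifications of $(\uf,\betf,\alf)$-approximability with $(f,\ssigma,\rrho)$-approximability, and of $(\ug,\alf,\betf)$-Dirichlet with $(g;\rrho,\ssigma)$-Dirichlet, are correct.

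The one thing worth tightening is the ``constant-chasing'' you defer. You state it as an equivalence (``$\iff$''), but you neither need nor get an equivalence with the same $C'$; you need only the one implication from the conclusion of Theorem~\ref{Jarnik_general} to the weighted Dirichlet property, and that is a clean computation. Concretely: writing out the $(\ug;C\alf,C\betf)$-Dirichlet condition for $(\Theta,\pmb\eta)$, it requires $\q\ne\mathbf 0$ with $\|\Theta_i\q-\eta_i\|\le C\,g(T)^{\rho_i}$ and $|q_j|\le C\,T^{\sigma_j}$ for all $T$ large. Given $S$ large, put $T=S/C^{1/r_-}$ with $r_-=\min_{i,j}\{\rho_i,\sigma_j\}$. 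Since $\sigma_j\ge r_-$ one gets $C\,T^{\sigma_j}=C^{\,1-\sigma_j/r_-}S^{\sigma_j}\le S^{\sigma_j}$, and since $g$ is decreasing and $\rho_i\ge r_-$ one gets $C\,g(T)^{\rho_i}\le \bigl(C^{1/r_-}g(S/C^{1/r_-})\bigr)^{\rho_i}$. Thus $(\Theta,\pmb\eta)$ is $(C'\cdot\ug\circ\tfrac1{C'};\rrho,\ssigma)$-Dirichlet with $C'=C^{1/r_-}=(d\cdot d!)^{1/r_-}$, and any larger $C'$ also works. With that inserted, the proof is complete and matches the paper's intent.
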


Finally, Theorem \ref{weighted_impr} yields an inhomogeneous version of the Davenport-Schmidt theorem for the fixed matrix case with an explicit bound for the constant $\varepsilon$.

\begin{proposition}\label{DI with weights}
    Suppose $f$ is such a continuous, strictly decreasing function that $\Theta^{\top}$ is $(f, \ssigma, \rrho)$-approximable, and let $\uf$ and $\ug$ be related by \eqref{fgrelation}. If $\varepsilon$ is small enough, then the set $\widehat{\bf D}_{n,m}^{\Theta}[\varepsilon \cdot g \circ \frac{1}{\varepsilon} ; \rrho, \ssigma]$ has Lebesgue measure zero. More specifically, let $r_- = \min\limits_{i,j} \{ \rho_i, \sigma_j \}$; then it is enough to require $\varepsilon < {(2d)^{-\frac{1}{r_-}}}$.
\end{proposition}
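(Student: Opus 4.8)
The plan is to obtain Proposition \ref{DI with weights} as the power-function specialization of Theorem \ref{weighted_impr}, with $\alpha_i(T)=T^{\rho_i}$ and $\beta_j(T)=T^{\sigma_j}$. These functions are quasimultiplicative and satisfy \eqref{star_condition}, so Theorem \ref{weighted_impr} already gives that $\widehat{\bf D}_{n,m}^{\Theta}[\varepsilon\cdot g\circ\frac1\varepsilon;\rrho,\ssigma]$ is Lebesgue-null for all sufficiently small $\varepsilon$. To pin down the explicit threshold $\varepsilon<(2d)^{-1/r_-}$ I would run the argument behind Theorem \ref{weighted_impr} directly, observing that the only inexplicit constants there come from the quasimultiplicativity of the weights, and these are trivial in our case because power functions are \emph{exactly} multiplicative: $\alpha_i(xy)=\alpha_i(x)\alpha_i(y)$ and $\beta_j(xy)=\beta_j(x)\beta_j(y)$.

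In detail: since $\Theta^{\top}\in{\bf W}_{m,n}[f,\ssigma,\rrho]$, there are infinitely many distinct $y^{(k)}\in\mathbb Z^n\setminus\{\mathbf 0\}$ --- hence with $|y^{(k)}|\to\infty$ --- and parameters $T_k\to\infty$ with $|y^{(k)}_i|\le T_k^{\rho_i}$ for all $i$ and $\|(\Theta^{\top})_j y^{(k)}\|\le f(T_k)^{\sigma_j}$ for all $j$. The relation \eqref{fgrelation} is equivalent to $g^{-1}(1/T)=1/f(T)$, so $g\bigl(1/f(T_k)\bigr)=1/T_k$. Now suppose $(\Theta,\pmb\eta)$ lies in $\widehat{\bf D}_{n,m}^{\Theta}[\varepsilon\cdot g\circ\frac1\varepsilon;\rrho,\ssigma]$ and apply the Dirichlet property at the (large) parameter $T=\varepsilon/f(T_k)$: there is $q_k\in\mathbb Z^m\setminus\{\mathbf 0\}$ with $|q_{k,j}|\le\varepsilon^{\sigma_j}/f(T_k)^{\sigma_j}$ and $\|\Theta_i q_k-\eta_i\|\le\bigl(\varepsilon\,g(1/f(T_k))\bigr)^{\rho_i}=\varepsilon^{\rho_i}/T_k^{\rho_i}$. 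Writing $\Theta^{\top}y^{(k)}=p+\theta$ and $\Theta q_k-\pmb\eta=p'+\phi$ with $p\in\mathbb Z^m$, $p'\in\mathbb Z^n$, $|\theta_j|\le f(T_k)^{\sigma_j}$ and $|\phi_i|\le\varepsilon^{\rho_i}/T_k^{\rho_i}$, the identity $\langle y^{(k)},\Theta q_k\rangle=\langle\Theta^{\top}y^{(k)},q_k\rangle$ rearranges to $\langle y^{(k)},\pmb\eta\rangle\equiv\langle\theta,q_k\rangle-\langle y^{(k)},\phi\rangle\pmod{\mathbb Z}$, and therefore
$$
\| \langle y^{(k)},\pmb\eta\rangle \|_{\mathbb R/\mathbb Z}\le\sum_{j=1}^m|\theta_j|\,|q_{k,j}|+\sum_{i=1}^n|y^{(k)}_i|\,|\phi_i|\le\sum_{j=1}^m\varepsilon^{\sigma_j}+\sum_{i=1}^n\varepsilon^{\rho_i}=:w .
$$
The decisive point is the exact cancellation $f(T_k)^{\sigma_j}\cdot\varepsilon^{\sigma_j}/f(T_k)^{\sigma_j}=\varepsilon^{\sigma_j}$ and $T_k^{\rho_i}\cdot\varepsilon^{\rho_i}/T_k^{\rho_i}=\varepsilon^{\rho_i}$, which makes $w$ independent of $k$.

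Since every $\sigma_j,\rho_i\ge r_-$ and $\varepsilon<1$, we get $w\le(m+n)\varepsilon^{r_-}=d\,\varepsilon^{r_-}$, so $\varepsilon<(2d)^{-1/r_-}$ forces $w<1/2$. Hence $\widehat{\bf D}_{n,m}^{\Theta}[\varepsilon\cdot g\circ\frac1\varepsilon;\rrho,\ssigma]$ is contained in $\bigcap_k\{\pmb\eta\in\mathbb R^n:\ \| \langle y^{(k)},\pmb\eta\rangle \|_{\mathbb R/\mathbb Z}\le w\}$; each of these sets is, modulo $\mathbb Z^n$, a finite union of parallel slabs of total measure $2w<1$, and since the $y^{(k)}$ are infinitely many distinct integer vectors with $|y^{(k)}|\to\infty$, a Lebesgue-density (equivalently equidistribution) argument --- the same one appearing in the proof of Theorem \ref{weighted_impr} --- shows that this intersection is null, which is what we want.

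The step I expect to be the main obstacle is confirming that no other constant enters: one must check that in the general proof of Theorem \ref{weighted_impr} the only place where ``$\varepsilon$ small enough'' is needed is to absorb the quasimultiplicativity constants, so that for power functions the requirement reduces exactly to $w<1/2$, i.e. $\varepsilon^{r_-}<1/(2d)$ --- here the $2$ is precisely the slab-width-to-radius factor in ``$\|\cdot\|\le w$'', and the $d=m+n$ counts the $m$ terms $\varepsilon^{\sigma_j}$ together with the $n$ terms $\varepsilon^{\rho_i}$, each bounded by $\varepsilon^{r_-}$. A secondary point to get right is the identity $g^{-1}(1/T)=1/f(T)$ and the exactness of the above cancellations, which rely on $f,g$ being strictly decreasing, continuous and related by \eqref{fgrelation}; with a merely non-strict relation there would be slack that would spoil the clean constant.
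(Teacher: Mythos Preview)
Your proof is correct and follows the same route as the paper: specialize Theorem \ref{weighted_impr} to $\alpha_i(T)=T^{\rho_i}$, $\beta_j(T)=T^{\sigma_j}$, where exact multiplicativity turns the quasimultiplicativity bounds \eqref{eps(delta)} into the identities $\alpha_i(T)\alpha_i(\varepsilon/T)=\varepsilon^{\rho_i}$ and $\beta_j(\varepsilon T)\beta_j(1/T)=\varepsilon^{\sigma_j}$, so that one may take $\varepsilon(\delta)=\delta^{1/r_-}$ and the threshold $\delta<\tfrac{1}{2d}$ of Lemma \ref{measurepart} becomes exactly $\varepsilon<(2d)^{-1/r_-}$. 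Two small corrections: the Dirichlet condition holds only for $T$ large enough depending on $\pmb\eta$, so the containment is in the liminf $\bigcup_N\bigcap_{k\ge N}\Omega_k$ rather than the full intersection $\bigcap_k\Omega_k$; and the measure-zero step in the paper (Lemma \ref{measurepart}) is not a bare density argument but uses a sparse subsequence with $|y^{(k+1)}|/|y^{(k)}|\to\infty$ together with Sprind\v{z}uk's divergence lemma, so you should pass to such a subsequence before invoking it.
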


\vskip+0.3cm

Proposition \ref{kwad_weighted}, Proposition \ref{lem24} and Proposition \ref{DI with weights} provide a complete Lebesgue measure theory for sets of $(\ug, \rrho, \ssigma)$-Dirichlet pairs $(\Theta, \pmb{\eta})$ and the slices with a fixed matrix $\Theta$ for any weights $\rrho$ and $\ssigma$. The remaining case of the sets of $(\ug, \rrho, \ssigma)$-Dirichlet pairs $(\Theta, \pmb{\eta})$ with a fixed vector $\pmb{\eta}$ has also been studied: a zero-one law, similar to Proposition \ref{kwad_weighted}, is obtained for a non-weighted setting in \cite{kimkim} and then in \cite{He25} (Theorem 1.6) for "well-behaved" functions $g$ for Lebesgue measure and, generally, any Hausdorff measure in the weighted case.

\vskip+0.3cm

\subsection{A nontrivial example: approximations with changing weights}\label{3examplesection}

In this section we show a simple but nontrivial example of quasimultiplicative function tuples, showing that the results of our paper apply not only in the setup of approximations with weights or slight generalizations (such as adding logarithmic factors or taking polynomials into consideration), but in a variety of significantly different situations.

Let $\theta_1, \theta_2$ and $\eta$ be real numbers, and consider the system of inequalities 

\begin{equation}\label{3dim example}
\begin{cases}
    || \theta_1 q_1 + \theta_2 - \eta || \leq g(T), \\
    |q_1| \leq \beta_1(T),\\
    |q_2| \leq \beta_2(T),
\end{cases}
\end{equation}

corresponding to approximations of one linear form with two variables.

We will construct a pair of quasimultiplicative functions $\beta_1, \beta_2$ with the following property: there exists an unbounded set of values of $T$, for which the system \eqref{3dim example} corresponds to the setup with weights $(\sigma_1, \sigma_2) = (\frac{1}{3}, \frac{2}{3})$ (that is, $\beta_1(T) =f_{-\frac{1}{3}}(T) =  T^{\frac{1}{3}}$ and $\beta_2(T) =f_{-\frac{2}{3}}(T) =  T^{\frac{2}{3}}$), and an unbounded set of values of $T$, for which it reflects the flipped weights $(\sigma_1, \sigma_2) = (\frac{2}{3}, \frac{1}{3})$ (that is, $\beta_1(T) = T^{\frac{2}{3}}$ and $\beta_2(T) = T^{\frac{1}{3}}$).

It will be easier to describe these functions if we switch to the logarithmic scale. Let us define the functions $\gamma_j(t): = \ln \beta_j(e^t)$, that is, $\beta_j(T) = e^{\gamma_j \left( \ln T \right)}$, and describe the functions $\beta_j$ via $\gamma_j$. 

Note that $T^{\frac{1}{3}} = e^{\varphi_1 \ln T}$ and $T^{\frac{2}{3}} = e^{\varphi_2 \ln T}$, where $\varphi_1(t) = \ln f_{-\frac{1}{3}}(e^t) = \frac{1}{3} t$ and $\varphi_2(t) = \frac{2}{3} t$. We will construct $\gamma_1(t)$ as a piecewise linear functions with slopes $\frac{1}{4}$ or $\frac{3}{4}$, oscillating between $\varphi_1(t)$ and $\varphi_2(t)$, and $\gamma_2(t)$ as the function with the same property defined by $\gamma_1(t) + \gamma_2(t) = t$.

For instance, let us define for $t \geq 1$ 

$$
\gamma_1(t): = 
\begin{cases}
    \frac{3}{4}t -\frac{5}{12} \cdot  5^{2k}, \,\,\,\,\,\,\,\,\,\,\,\, 5^{2k} \leq t < 5^{2k+1};\\
    \frac{1}{4}t + \frac{5}{12} \cdot 5^{2k+1}, \,\,\,\,\,\,\,\,\,\,\,\, 5^{2k+1} \leq t < 5^{2k+2},
\end{cases}
$$

with $\gamma_1(t) = \frac{1}{3}t$ for $0 \leq t \leq 1$, and

$$
\gamma_2(t): = t - \gamma_1(t) =  
\begin{cases}
    \frac{1}{4}t + \frac{5}{12} \cdot  5^{2k}, \,\,\,\,\,\,\,\,\,\,\,\, 5^{2k} \leq t < 5^{2k+1};\\
    \frac{3}{4}t - \frac{5}{12} \cdot 5^{2k+1}, \,\,\,\,\,\,\,\,\,\,\,\, 5^{2k+1} \leq t < 5^{2k+2}.
\end{cases}
$$

The graphs of ${\gamma_1}, {\gamma_2}, {\varphi_1}$ and ${\varphi_2}$ are shown at Figure~\ref{Fig:Data1} below.


\begin{figure}[!ht]
   \begin{minipage}{\textwidth}
     \centering
     \includegraphics[width=.6\linewidth]{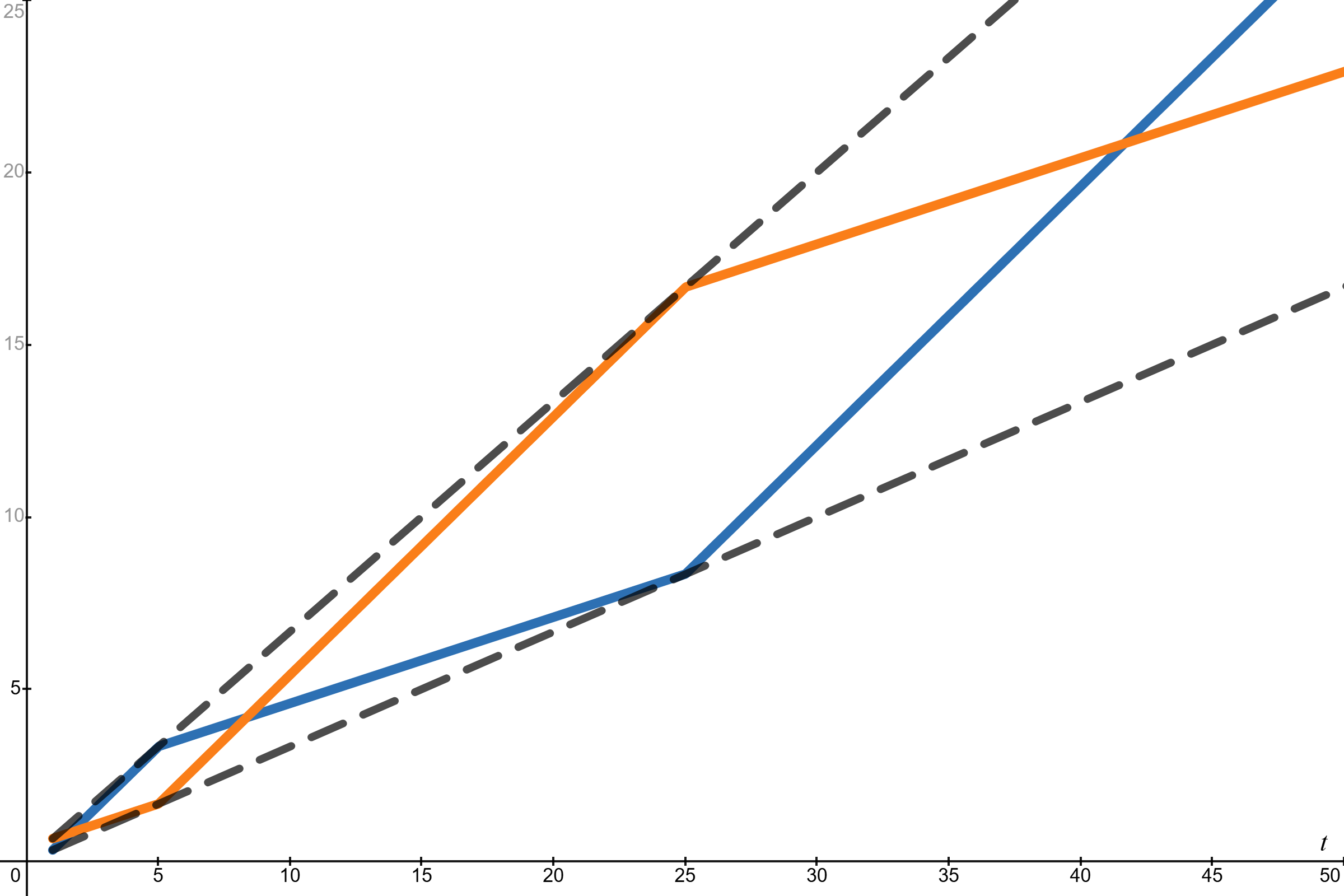}
     \caption{${\gamma_1} \, \text{(blue)}, {\gamma_2} \, \text{(orange)}, {\varphi_1}$ and ${\varphi_2}$ (black)}\label{Fig:Data1}
   \end{minipage}
\end{figure}

This defines the values $\beta_1(T)$ and $\beta_2(T)$ for $T \geq 1$, such that $\beta_1(1) = \beta_2(1) = 1$; we extend these functions to $\mathbb{R}_+$ by \eqref{star_condition}.
It is easy to check quasiultiplicativity of $\beta_1$ and $\beta_2$: indeed

$$
\frac{\beta_j(e^{k+1})}{\beta_j(e^{k})} = e^{\gamma_j(k+1) - \gamma_j(k)} \in [ e^\frac{1}{4}; e^\frac{3}{4}],
$$

so one can take $c_1 = e^\frac{1}{4}, \, c_2 = e^\frac{3}{4}$ and $M = e$.

\subsection{Structure of the paper}

{

In Section \ref{geom section} we introduce a useful geometric setup and reformulate the main definitions in terms of trajectories on the space of lattices. Section \ref{applsection} studies a generalized set of badly approximable matrices in the setup of general weight functions, and provides a criterion of bad approximability in terms of inhomogeneous approximations; in addition, we provide some more specific statements regarding badly and very well approximable matrices for the setup of approximations with weights in Section \ref{weights_bad_section}. We prove our main statements in Section \ref{section proofs}.

}

\section{On geometry and dynamics}\label{geom section}
\subsection{Relation to geometry of numbers}\label{generalgeom}

In this section, we translate our setup to geometrical language and recall both classical and more recent results concerning geometry of numbers which will be used in our proofs.

We denote by

$$
X_d: = GL_d \left( \mathbb{R} \right)/GL_d \left( \mathbb{Z} \right), \,\,\,\,\,\,\,\,\,\,\,\,\,\,\, \widehat{X}_d: = \left( GL_d \left( \mathbb{R} \right) \rtimes \mathbb{R}^d \right)/\left( GL_d \left( \mathbb{Z} \right) \rtimes \mathbb{Z}^d \right)
$$

the spaces of lattices and grids (affine lattices) in $\mathbb{R}^d$, and by

$$
X^1_d: = SL_d \left( \mathbb{R} \right)/SL_d \left( \mathbb{Z} \right), \,\,\,\,\,\,\,\,\,\,\,\,\,\,\, \widehat{X}^1_d: = \left( SL_d \left( \mathbb{R} \right) \rtimes \mathbb{R}^d \right)/\left( SL_d \left( \mathbb{Z} \right) \rtimes \mathbb{Z}^d \right)
$$

the spaces of unimodular lattices and grids. We note that $GL_d \left( \mathbb{R} \right)$ acts on $X_d$ by multiplication and, analogously, $ GL_d \left( \mathbb{R} \right) \rtimes \mathbb{R}^d $ acts on $\widehat{X}_d$. One can obtain an action of $GL_d \left( \mathbb{R} \right)$ on $\widehat{X}_d$, induced from the action of  $ GL_d \left( \mathbb{R} \right) \rtimes \mathbb{R}^d $ by identifying an element $A \in   GL_d \left( \mathbb{R} \right)$ with $\langle A, {\bf 0} \rangle \in  GL_d \left( \mathbb{R} \right) \rtimes \mathbb{R}^d $.

Let

$$
u_{\Theta} = 
\begin{pmatrix}
    I_n & \Theta \\
    0 & I_m
\end{pmatrix}
\,\,\,\,\,\,\,\,\,\,\,\, \text{and} \,\,\,\,\,\,\,\,\,\,\,\,
u_{\Theta, \pmb{\eta}} = \left\langle \begin{pmatrix}
    I_n & \Theta \\
    0 & I_m
\end{pmatrix}, \begin{pmatrix}
    - \pmb{\eta} \\
    0 
\end{pmatrix} \right\rangle,
$$

where $I_n$ is the $n$-dimensional identoty matrix. We will utilize the notation $u_{-\Theta}^{\top}$ for the transposed operator $\begin{pmatrix}
    I_n & 0 \\
    -\Theta^{\top} & I_m
\end{pmatrix}$.

We define 

$$
\Lambda_{\Theta}: = u_{\Theta} \mathbb{Z}^d = \left\{  \begin{pmatrix}
    \Theta \q  + \p \\
    \q
\end{pmatrix}, \,\,\,\p \in \mathbb{Z}^n, \,\, \q \in \mathbb{Z}^m \right\} \in X_d
$$

and 

$$
\Lambda_{\Theta, \pmb{\eta}}: = u_{\Theta, \pmb{\eta}} \mathbb{Z}^d = \left\{  \begin{pmatrix}
    \Theta \q - \pmb{\eta} + \p \\
    \q
\end{pmatrix}, \,\,\,\p \in \mathbb{Z}^n, \,\, \q \in \mathbb{Z}^m \right\} \in \widehat{X}_d,
$$

as well as 

$$
\Lambda_{-\Theta}^{\top}: = u_{-\Theta}^{\top} \mathbb{Z}^d = \left\{  \begin{pmatrix}
    \p \\
    -\Theta^{\top} \p + \q
\end{pmatrix}, \,\,\,\p \in \mathbb{Z}^n, \,\, \q \in \mathbb{Z}^m \right\} \in X_d.
$$

\vskip+0.3cm

Let $\alpha_i, \beta_j$ be as in Section \ref{general def section}. We define the family of diagonal operators

$$
a_{\alf, \betf} \left( U, T \right) = diag \left( \frac{1}{\alpha_1(U)}, \ldots, \frac{1}{\alpha_n(U)}, \frac{1}{\beta_1(T)}, \ldots, \frac{1}{\beta_m(T)} \right) \in GL_d \left( \mathbb{R} \right),
$$

with

$$
a_{\alf, \betf}^{-1} \left( U, T \right) = diag \left( {\alpha_1(U)}, \ldots, {\alpha_n(U)}, {\beta_1(T)}, \ldots, {\beta_m(T)} \right) = a_{\alf, \betf} \left( \frac{1}{U}, \frac{1}{T} \right).
$$

being the family of inverse operators. From now on, we will refer to the cube $[-1, 1]^d$ as $\mathcal{B}_d$. One can directly check the following useful facts:

\begin{obs}\label{geom_sets}{(Dani's correspondence; see \cite{Dani} and Theorem 8.5 from \cite{KM99} for classical analogues.)}

\begin{itemize}
    \item $\Theta \in {\bf W}_{n,m}[\ug; \alf, \betf]$ if any only if

    $$
    a_{\alf, \betf} \left( g(T), T \right) \Lambda_{\Theta} \cap \mathcal{B}_d \neq \{ {\bf 0} \}
    $$

    for an unbounded set of $T \in \mathbb{R}_+$;
    
    \item If $\pmb{\eta} \neq {\bf 0}$, then $(\Theta, \pmb{\eta}) \in \widehat{\bf D}_{n,m}[\ug; \alf, \betf]$ if any only if

    $$
    a_{\alf, \betf} \left( g(T), T \right) \Lambda_{\Theta, \pmb{\eta}} \cap \mathcal{B}_d \neq \emptyset
    $$

    for any $T \in \mathbb{R}_+$ large enough. In the case $\pmb{\eta} = {\bf 0}$, we should require the aforementioned set to contain nonzero points.

    \item $\Theta^{\top} \in {\bf W}_{m,n}[\uf; \betf, \alf]$ if any only if

    $$
    a_{\alf, \betf} \left( U, f(U) \right) \Lambda_{-\Theta}^{\top} \cap \mathcal{B}_d \neq \{ {\bf 0} \}
    $$

    for an unbounded set of $U \in \mathbb{R}_+$.
\end{itemize}

\end{obs}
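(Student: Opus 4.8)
The plan is to verify the three equivalences directly, by writing the diagonal operator $a_{\alf, \betf}$ explicitly on a generating set of the relevant lattice or grid and then reading off membership in $\mathcal{B}_d = [-1,1]^d$ coordinate by coordinate. Recall that each of the three Diophantine conditions has already been rewritten in Section \ref{general def section} as an explicit system of inequalities; the task is only to match that system with a geometric intersection condition.

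For the first bullet, a generic point of $\Lambda_{\Theta}$ has the form $\left( \Theta \q + \p, \q \right)$ with $\p \in \mathbb{Z}^n$, $\q \in \mathbb{Z}^m$, and applying $a_{\alf, \betf}(g(T), T)$ scales the $i$-th top coordinate by $1/\alpha_i(g(T))$ and the $j$-th bottom coordinate by $1/\beta_j(T)$. Hence the image lies in $\mathcal{B}_d$ if and only if $|(\Theta\q + \p)_i| \le \alpha_i(g(T))$ for all $i$ and $|q_j| \le \beta_j(T)$ for all $j$. Minimizing over $\p \in \mathbb{Z}^n$ replaces $|(\Theta\q+\p)_i|$ by $\|(\Theta\q)_i\|$, and since the $\alpha_i, \beta_j$ are increasing one may pass freely between $\|(\Theta\q)_i\| \le \alpha_i(g(T))$ and $\|\Theta\q\|_{\alf} \le g(T)$, and between $|q_j| \le \beta_j(T)$ and $|\q|_{\betf} \le T$; this is exactly the explicit system defining $(\ug, \alf, \betf)$-approximability. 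The only point requiring care is the exclusion of the trivial solution $\q = {\bf 0}$: if $\q = {\bf 0}$ and $\p \neq {\bf 0}$, the image is $(\p_i / \alpha_i(g(T)))_i$, which can lie in $\mathcal{B}_d$ only when $|\p_i| \le \alpha_i(g(T))$, and since $g(T) \to 0$ and $\alpha_i$ is continuous with $\alpha_i(0) = 0$ this forces $\p = {\bf 0}$ once $T$ is large. As restricting to large $T$ does not affect whether the solution set is unbounded, the nonzero lattice points in the intersection are, for large $T$, precisely those with $\q \neq {\bf 0}$.

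The second bullet is the same computation with $\Lambda_{\Theta, \pmb{\eta}}$, whose points are $\left( \Theta\q - \pmb{\eta} + \p, \q \right)$, so membership in $\mathcal{B}_d$ becomes $\|\Theta\q - \pmb{\eta}\|_{\alf} \le g(T)$ together with $|\q|_{\betf} \le T$; here every $\q$ (in particular $\q = {\bf 0}$) is permitted by the definition of a Dirichlet pair, so when $\pmb{\eta} \notin \mathbb{Z}^n$ no restriction is needed, while when $\pmb{\eta} \in \mathbb{Z}^n$ (in particular $\pmb{\eta} = {\bf 0}$) the grid is a genuine lattice containing ${\bf 0} \in \mathcal{B}_d$, so one must there additionally demand that the intersection contain a nonzero point, which is exactly the requirement $\q \neq {\bf 0}$. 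For the third bullet, a point of $\Lambda_{-\Theta}^{\top}$ is $\left( \p, -\Theta^{\top}\p + \q \right)$, and applying $a_{\alf, \betf}(U, f(U))$ turns membership in $\mathcal{B}_d$ into $|p_i| \le \alpha_i(U)$ and $\|(\Theta^{\top}\p)_j\| \le \beta_j(f(U))$, which with $\y := \p$ is precisely the explicit system for $(\uf, \betf, \alf)$-approximability of $\Theta^{\top}$; as before, $\p = {\bf 0}$ forces $\q = {\bf 0}$ for large $U$, so nonzero intersection points correspond to genuine approximation solutions. The computation is essentially bookkeeping; the only places to be attentive are the handling of the degenerate solutions just discussed and keeping the $\alf$-versus-$\betf$ and transpose conventions straight in the third bullet.
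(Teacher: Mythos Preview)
Your direct verification is exactly what the paper has in mind: the observation is stated without proof, with the remark that it ``can be directly checked,'' and your coordinate-by-coordinate unpacking of $a_{\alf,\betf}(g(T),T)\Lambda_\Theta \cap \mathcal{B}_d$ (and its variants) against the explicit systems from Section~\ref{general def section} is precisely that check.

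One small slip in the second bullet: the definition of a $(\ug,\alf,\betf)$-Dirichlet pair does require $\q \in \mathbb{Z}^m \setminus \{\bf 0\}$, contrary to your claim that ``every $\q$ (in particular $\q = {\bf 0}$) is permitted.'' The reason nonempty intersection suffices when $\pmb{\eta}\notin\mathbb{Z}^n$ is the same as the one you used in the first bullet: a point of $a_{\alf,\betf}(g(T),T)\Lambda_{\Theta,\pmb{\eta}}$ with $\q = {\bf 0}$ lies in $\mathcal{B}_d$ only if $\|\eta_i\| \le \alpha_i(g(T))$ for all $i$, which fails for large $T$ since $\alpha_i(g(T))\to 0$. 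Your case split $\pmb{\eta}\in\mathbb{Z}^n$ versus $\pmb{\eta}\notin\mathbb{Z}^n$ is in fact the correct one (and slightly sharper than the paper's $\pmb{\eta}={\bf 0}$ versus $\pmb{\eta}\neq{\bf 0}$, which tacitly identifies $\pmb{\eta}$ with its class mod $\mathbb{Z}^n$); only the stated justification needs this adjustment.
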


We now introduce the notions of dual lattices (also called {\it polar}; see \cite{cas} for details and discussion) and pseudo-compound parallelepipeds (introduced by Schmidt as a simplification of Mahler's compound bodies; see \cite{schmidtbook} for details).

\begin{definition}
    \begin{itemize}







        \item Let $\Lambda \in X_d$ be a lattice in $\mathbb{R}^d$. The set

        $$
        \Lambda^* = \{ \y \in \mathbb{R}^d: \,\,\, \x \cdot \y \in \mathbb{Z} \,\,\,\, \text{for any} \,\,\, \x \in \Lambda \}
        $$

        is a  lattice in $\mathbb{R}^d$, called {\it the dual lattice} for $\Lambda$.

        \item Let 

        \begin{equation}\label{standard}
        \mathcal{P} = \{ \x = (x_1, \ldots, x_d) \in \mathbb{R}^d: \,\,\, |x_i| \leq \lambda_i, \,\,\, i = 1, \ldots, d \},
        \end{equation}

        where $\lambda_1, \ldots, \lambda_d$ are positive real numbers, be a parallelepiped, and $L = \prod\limits_{j=1}^d \lambda_j$. Then, the parallelepiped 

        $$
        \mathcal{P}^* = \{ \y = (y_1, \ldots, y_d) \in \mathbb{R}^d: \,\,\, |y_i| \leq \frac{L}{\lambda_i}, \,\,\, i = 1, \ldots, d \},
        $$

        is called {\it pseudo-compound} for $\mathcal{P}$.
    \end{itemize}
\end{definition}


\vskip+0.3cm

The following useful statement follows Mahler's works. Part \ref{kostya} is a strengthening of Mahler's original theorem which was shown in \cite{GK15} (Theorem 1; we use it in the formulation of Theorem 8 from \cite{G20}). Part \ref{mahler} was implicitly stated by Mahler (see the review \cite{Evertse} for details) and, with an explicit constant, for general dual bodies as Lemma 2.1 in \cite{CGGMS20}. Our statement slightly improves this constant for the case of pseudo-compound parallelepipeds. 

\begin{lem}\label{lem21}
\begin{enumerate}[label=(\arabic*)]

    \item\label{kostya} Let $c = c_d: = d^{\frac{1}{2(d-1)}}$. Let $\Lambda$ be a lattice in $\mathbb{R}^d$, and $\Lambda^*$ its dual. Let $\mathcal{P}$ be a standard parallelepiped of the form \eqref{standard}, and $\mathcal{P}^*$ be its pseudo-compound. If
    $$
    \mathcal{P}^* \cap \Lambda^* \neq \{ {\bf 0} \},
    $$

    then

    $$
    c \mathcal{P} \cap \Lambda \neq \{ {\bf 0} \}.
    $$
    
    \item\label{mahler} Let $C = C_{d} = d \cdot d!$.  Let $\Lambda \in X_d$, $\mathcal{P}$ be a standard parallelepiped of the form \eqref{standard} in $\mathbb{R}^d$, and $\Lambda^*$ and $\mathcal{P}^*$ be the dual lattice and pseudo-compound parallelepiped for $\Lambda$ and $\mathcal{P}$, respectfully. Suppose that $\mathcal{P}^* \cap \Lambda^* = \{ { \bf 0 }\}$. Then for any $\gamma \in \mathbb{R}^d$ one has

    $$
    \left(\frac{C}{L} \mathcal{P} + \gamma \right) \cap \Lambda \neq \emptyset,
    $$

    moreover, 

    $$
    \frac{C}{L} \mathcal{P}\cap \Lambda \neq\{ { \bf  0 } \}.
    $$

\end{enumerate}
\end{lem}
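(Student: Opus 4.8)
The goal is to prove Lemma \ref{lem21}, the transference lemma relating a standard parallelepiped $\mathcal{P}$, its pseudo-compound $\mathcal{P}^*$, a lattice $\Lambda \in X_d$ and its dual $\Lambda^*$. Both parts are classical-in-spirit and rest on Minkowski's successive minima together with Mahler's inequality relating the successive minima of a convex body to those of its polar.

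\textbf{Setup via successive minima.} First I would fix notation for successive minima: for a symmetric convex body $K$ and a lattice $L$, let $\mu_1(K,L) \le \cdots \le \mu_d(K,L)$ denote the successive minima. The key external input is Mahler's inequality: for the standard parallelepiped $\mathcal{P}$ and its pseudo-compound $\mathcal{P}^*$ (which is, up to the scaling factor $L = \prod \lambda_j$, the polar body of $\mathcal{P}$), and for dual lattices $\Lambda, \Lambda^*$, one has
$$
1 \le \mu_i(\mathcal{P}, \Lambda) \cdot \mu_{d+1-i}(\mathcal{P}^*, \Lambda^*) \le d!
$$
for each $i = 1, \dots, d$ (the left inequality is elementary; the right is Mahler's theorem, and the factor $d!$ is the standard bound). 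The observation that makes $\mathcal{P}^*$ the natural companion of $\mathcal{P}$ is that $\mathcal{P}^* = L \cdot \mathcal{P}^{\circ}$ where $\mathcal{P}^{\circ}$ is the polar, so that $\mu_i(\mathcal{P}^*, \Lambda^*) = \tfrac1L \mu_i(\mathcal{P}^\circ, \Lambda^*)$; combining this with the polar-body version of Mahler's inequality gives the displayed bound with the specific constant. I would state this as the computational heart and verify the constant carefully, since the paper claims a slight improvement over the $\mathcal{CGGMS}20$ constant — the improvement presumably comes from using the exact pseudo-compound rather than a general dual body, so the relevant inequality has $d!$ rather than something like $(d!)$-times-a-volume-ratio.

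\textbf{Part \ref{kostya}.} The hypothesis $\mathcal{P}^* \cap \Lambda^* \ne \{\mathbf 0\}$ says $\mu_1(\mathcal{P}^*, \Lambda^*) \le 1$. By the left half of Mahler's inequality applied with $i = d$, $\mu_d(\mathcal{P}, \Lambda) \cdot \mu_1(\mathcal{P}^*, \Lambda^*) \ge 1$, so $\mu_d(\mathcal{P}, \Lambda) \ge 1$ — that is the wrong direction. Instead I use the sharpened transference of $\mathcal{GK}15$ (Theorem 1, as cited): it bounds $\mu_1(\mathcal{P},\Lambda) \cdot \mu_1(\mathcal{P}^*,\Lambda^*)$, or more precisely gives a bound of the form $\mu_1(\mathcal{P},\Lambda) \le c_d / \mu_?(\cdots)$ producing the constant $c = d^{1/(2(d-1))}$. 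Concretely: the German--Komatsu type refinement states $\mu_1(\mathcal P,\Lambda)\,\mu_1(\mathcal P^*,\Lambda^*) \le d^{1/(d-1)}$ (this is the sharp form replacing Mahler's $\mu_1 \mu_d \le d!$ chain for the first minima), whence $\mu_1(\mathcal P,\Lambda) \le d^{1/(d-1)} = c_d^2$. Hmm — I must reconcile the square: the clean route is that $\mathcal{GK}15$ gives $\mu_1(\mathcal P,\Lambda) \le c_d^2 \big/ \mu_1(\mathcal P^*,\Lambda^*) \le c_d^2$ only if we want $c_d^2 \mathcal P$, but the statement wants $c_d \mathcal P$; so the correct reading is that $\mathcal{GK}15$'s bound is $\mu_1\mu_1 \le c_d = d^{1/(2(d-1))}$ in the normalization where $\mathcal P^*$ is the pseudo-compound — I would just cite the precise inequality from \cite{GK15}/\cite{G20} and plug $\mu_1(\mathcal P^*,\Lambda^*)\le 1$ to get $\mu_1(\mathcal P,\Lambda) \le c_d$, which is exactly $c_d \mathcal P \cap \Lambda \ne \{\mathbf 0\}$. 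The honest version of this paragraph will quote Theorem 8 of \cite{G20} verbatim and substitute.

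\textbf{Part \ref{mahler}.} Now $\mathcal{P}^* \cap \Lambda^* = \{\mathbf 0\}$ means $\mu_1(\mathcal P^*, \Lambda^*) > 1$. By Mahler's inequality with $i = 1$: $\mu_1(\mathcal P^*,\Lambda^*)\cdot \mu_d(\mathcal P, \Lambda) \le d!$, so $\mu_d(\mathcal P, \Lambda) < d!$, i.e. $d! \,\mathcal P$ contains $d$ linearly independent lattice points, hence a fundamental domain translate: more precisely $\mathcal P$ scaled by $d!$ has covering radius controlled, and one gets that $(d\cdot d!)\mathcal P + \gamma$ meets $\Lambda$ for every $\gamma$ by the standard fact that if $\mu_d(K,L) \le t$ then $L + \tfrac{td}{2}K = \mathbb R^d$ — actually the clean statement is: $d$ independent points in $tK$ span a sublattice whose fundamental parallelepiped is covered by $\tfrac{d}{2}\cdot t K$ plus lattice, and passing to the full lattice only helps; so the covering radius of $\Lambda$ with respect to $\mathcal P$ is at most $\tfrac{d}{2}\mu_d(\mathcal P,\Lambda) \le \tfrac{d}{2} d! < d\cdot d! = C$. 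Hence $(\tfrac{C}{L}\mathcal P + \gamma)\cap\Lambda \ne\emptyset$ for all $\gamma$ — wait, I need to track the $L$: the issue is that $\mathcal P^*\cap\Lambda^* = \{\mathbf 0\}$ versus $\mathcal P^*$ having the $L$-scaling built in, so Mahler's inequality in the form with pseudo-compounds already absorbs $L$, giving $\mu_d(\mathcal P,\Lambda)\le d!/L$... no — $\mathcal P^*$ as defined has half-lengths $L/\lambda_i$, so $\mathrm{vol}(\mathcal P^*) = 2^d L^{d-1}$, and the polar normalization means Mahler reads $\mu_1(\mathcal P^*,\Lambda^*)\mu_d(\mathcal P,\Lambda) \le d!$ directly (the $L$'s cancel because $\Lambda^*$ is unimodular-dual to $\Lambda$ and $\Lambda\in X_d$). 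I will double-check this normalization bookkeeping — \textbf{that is the main obstacle}: getting the powers of $L$ and the constant $C = d\cdot d!$ to come out exactly, rather than off by a volume factor. Once $\mu_d(\mathcal P,\Lambda) < d!$ is established, the inhomogeneous conclusion $(\tfrac{C}{L}\mathcal P + \gamma)\cap\Lambda\neq\emptyset$ follows from the covering-radius bound $\rho(\mathcal P,\Lambda)\le \tfrac d2\mu_d(\mathcal P,\Lambda)$, and the homogeneous conclusion $\tfrac CL\mathcal P\cap\Lambda\neq\{\mathbf 0\}$ is immediate since $\mu_1 \le \mu_d < d! < C$.

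In summary: the proof is three short steps — (i) record Mahler's transference inequalities for $(\mathcal P,\Lambda)$ versus $(\mathcal P^*,\Lambda^*)$ with the sharp constants from \cite{GK15}/\cite{G20}; (ii) for Part \ref{kostya}, feed $\mu_1(\mathcal P^*,\Lambda^*)\le 1$ into the sharpened inequality to get $\mu_1(\mathcal P,\Lambda)\le c_d$; (iii) for Part \ref{mahler}, feed $\mu_1(\mathcal P^*,\Lambda^*)>1$ into Mahler's classical inequality to get $\mu_d(\mathcal P,\Lambda)<d!$, then invoke the covering-radius estimate to upgrade to the inhomogeneous statement with $C = d\cdot d!$. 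The one genuinely delicate point is the normalization/constant bookkeeping in step (i), which is exactly where the claimed improvement over \cite{CGGMS20} lives.
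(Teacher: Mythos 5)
Your architecture for Part~\ref{mahler} is the same as the paper's --- apply Mahler's second theorem to bound $\mu_d(\mathcal P,\Lambda)$ from above using $\mu_1(\mathcal P^*,\Lambda^*)>1$, then pass from $d$ independent lattice points to a covering statement --- but you have a concrete normalization error in the Mahler step that breaks the constant. You flag the $L$-bookkeeping as ``the main obstacle'' and then resolve it the wrong way: you conclude $\mu_1(\mathcal P^*,\Lambda^*)\,\mu_d(\mathcal P,\Lambda)\le d!$ with ``the $L$'s cancel because $\Lambda^*$ is unimodular-dual to $\Lambda$ and $\Lambda\in X_d$.'' This is false on two counts. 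First, $X_d$ in this paper is $GL_d(\mathbb R)/GL_d(\mathbb Z)$, so elements of $X_d$ need not be unimodular. Second, and more fundamentally, the $L$ has nothing to do with lattice normalization: it comes from the discrepancy between the pseudo-compound $\mathcal P^*$ and the true polar $\mathcal P^{\circ}$. Since $\mathcal P^{\circ}=\{\y:\sum_i\lambda_i|y_i|\le 1\}\subseteq\{\y:|y_i|\le 1/\lambda_i\}=\tfrac1L\mathcal P^*$, one has $\mu_1(\mathcal P^{\circ},\Lambda^*)\ge L\,\mu_1(\mathcal P^*,\Lambda^*)$, and plugging into Mahler's $\mu_1(\mathcal P^{\circ},\Lambda^*)\mu_d(\mathcal P,\Lambda)\le d!$ gives the correct
$$
\mu_1(\mathcal P^*,\Lambda^*)\,\mu_d(\mathcal P,\Lambda)\;\le\;\frac{d!}{L}\;=\;\frac{2^d\,d!}{\operatorname{vol}\mathcal P},
$$
which is what the paper uses. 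Your version $\le d!$ is simply wrong --- take $\mathcal P=t\mathcal B_d$ with $t$ small and $\Lambda=\Lambda^*=\mathbb Z^d$: then $L=t^d$, $\mathcal P^*=t^{d-1}\mathcal B_d$, and the left side is $t^{-(d-1)}\cdot t^{-1}=t^{-d}$, which blows up. With your bound you get $\mu_d(\mathcal P,\Lambda)<d!$ instead of $\mu_d(\mathcal P,\Lambda)<d!/L$, and the covering-radius step then produces a translate of $\tfrac d2 d!\,\mathcal P$, not of $\tfrac CL\mathcal P$; the $1/L$ factor in the conclusion has vanished and the claimed statement does not follow.

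Everything else is sound. Your covering-radius route ($\rho(\mathcal P,\Lambda)\le\tfrac d2\mu_d(\mathcal P,\Lambda)$) is a valid and in fact slightly sharper replacement for the paper's ``contains a closure of a fundamental domain'' argument, and would give the constant $C/(2L)$ once the Mahler step is fixed. For Part~\ref{kostya} your plan to cite \cite{GK15}/\cite{G20} verbatim is exactly what the paper does (it supplies no independent proof of that part), though your preliminary attempts to re-derive the constant $c_d$ from a first-minima product inequality are garbled; that's harmless since you end by saying you'd just quote the precise statement. The fix needed is short: replace your Mahler inequality with the $d!/L$ form by going through the inclusion $\mathcal P^{\circ}\subseteq\tfrac1L\mathcal P^*$.
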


We give a proof for part \ref{mahler} below.

{

\begin{proof}
    Let $\mu_k(\mathcal{P}, \Lambda)$ denote the $k$-th consecutive minimum of $\mathcal{P}$ with respect to the lattice $\Lambda$. By assumption, $ \mu_1(\mathcal{P}^*, \Lambda^*) > 1$. By classical result due to Mahler (we are using it in the form of Theorem D from \cite{GK15}), 

    $$
    \mu_1(\mathcal{P}^*, \Lambda^*)  \mu_d(\mathcal{P}, \Lambda) \leq \frac{2^d d!}{vol \mathcal{P}} = \frac{d!}{L},
    $$

    which implies that the parallelepiped $\frac{d!}{L} \mathcal{P}$ contains $d$ linearly independent points of lattice $\Lambda$. It is easy to see that $d \cdot \frac{d!}{L} \mathcal{P}$ then contains a closure of some fundamental domain of $\Lambda$, which implies the desired statement.
\end{proof}

}

\vskip+0.3cm

 Now one can make the following useful observations:

\begin{obs}\label{obs_cube}
    The cube $\mathcal{B}_d$ is self-pseudo-compound: $\mathcal{B}_d^* = \mathcal{B}_d$. More generally, $\left( k \mathcal{B}_d\right)^* = k^{d-1} \mathcal{B}_d$.
\end{obs}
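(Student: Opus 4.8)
The statement to prove is Observation \ref{obs_cube}: the cube $\mathcal{B}_d = [-1,1]^d$ satisfies $\mathcal{B}_d^* = \mathcal{B}_d$, and more generally $(k\mathcal{B}_d)^* = k^{d-1}\mathcal{B}_d$.

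\textbf{The plan.} This is a direct computation from the definition of pseudo-compound parallelepiped. Recall that for a standard parallelepiped $\mathcal{P} = \{\x : |x_i| \le \lambda_i\}$ with $L = \prod_{j=1}^d \lambda_j$, the pseudo-compound is $\mathcal{P}^* = \{\y : |y_i| \le L/\lambda_i\}$. First I would apply this with $\lambda_i = 1$ for all $i$: then $L = 1$ and $L/\lambda_i = 1$, so $\mathcal{B}_d^* = \{\y : |y_i| \le 1\} = \mathcal{B}_d$. For the general claim, take $\lambda_i = k$ for all $i$, so that $k\mathcal{B}_d$ is the standard parallelepiped with these parameters; then $L = \prod_{j=1}^d k = k^d$, and $L/\lambda_i = k^d/k = k^{d-1}$ for each $i$, whence $(k\mathcal{B}_d)^* = \{\y : |y_i| \le k^{d-1}\} = k^{d-1}\mathcal{B}_d$.

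\textbf{Main obstacle.} There is essentially no obstacle here — the observation is a one-line substitution into the definition. The only thing worth being careful about is the bookkeeping of the product $L$: one must note that scaling all side-lengths by $k$ multiplies $L$ by $k^d$, which is why the exponent $d-1$ (rather than $1$) appears in the pseudo-compound. I would present it as a short remark-style verification, noting that the first statement is the $k=1$ case of the second, and that the identity explains why the cube is the natural test body in Dani's correspondence (Observation \ref{geom_sets}) when combined with Lemma \ref{lem21}: applying part \ref{kostya} or \ref{mahler} to $\mathcal{P} = k\mathcal{B}_d$ keeps us within the family of cubes, up to the explicit constants $c_d$ and $C_d$.
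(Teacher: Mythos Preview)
Your proposal is correct and is exactly the direct substitution into the definition that the paper intends; in fact the paper states Observation \ref{obs_cube} without proof, treating it as an immediate consequence of the definition of pseudo-compound parallelepiped, which is precisely what you have written out.
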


\begin{obs}\label{obs_dual}
    The lattice 
    $$
    \Lambda_{\Theta}^*: = \Lambda_{-\Theta}^{\top} = u_{-\Theta}^{\top} \mathbb{Z}^d
    $$

    is a dual lattice for $\Lambda_{\Theta}$, and the lattice

    $$
    a_{\alf, \betf} \left( U, T \right)\Lambda_{\Theta}^* = a_{\alf, \betf} \left( U, T \right) u_{-\Theta}^{\top} \mathbb{Z}^d
    $$

    is a dual for $ a_{\alf, \betf} \left( \frac{1}{U}, \frac{1}{T} \right) \Lambda_{\Theta}$.
\end{obs}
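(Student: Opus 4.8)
The plan is to reduce both assertions to the single elementary fact that for any $B \in GL_d(\mathbb{R})$ and any lattice $\Lambda \subset \mathbb{R}^d$ one has $(B\Lambda)^* = B^{-\top}\Lambda^*$, where $B^{-\top}$ denotes the inverse transpose. This is immediate from the definition: $\y \in (B\Lambda)^*$ means $(B\x)\cdot\y \in \mathbb{Z}$ for all $\x \in \Lambda$, i.e.\ $\x \cdot (B^{\top}\y) \in \mathbb{Z}$ for all $\x \in \Lambda$, i.e.\ $B^{\top}\y \in \Lambda^*$. In particular, taking $\Lambda = \mathbb{Z}^d$ (which is self-dual), $(B\mathbb{Z}^d)^* = B^{-\top}\mathbb{Z}^d$.

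For the first claim I would apply this with $B = u_{\Theta}$. Since $u_{\Theta}^{-1} = u_{-\Theta} = \begin{pmatrix} I_n & -\Theta \\ 0 & I_m \end{pmatrix}$, transposing the block form gives $u_{\Theta}^{-\top} = \begin{pmatrix} I_n & 0 \\ -\Theta^{\top} & I_m \end{pmatrix} = u_{-\Theta}^{\top}$, so $\Lambda_{\Theta}^* = u_{\Theta}^{-\top}\mathbb{Z}^d = u_{-\Theta}^{\top}\mathbb{Z}^d = \Lambda_{-\Theta}^{\top}$, which is the definition given earlier in the excerpt.

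For the second claim I would apply the fact with $B = a_{\alf,\betf}\!\left(\tfrac{1}{U},\tfrac{1}{T}\right)$ and $\Lambda = \Lambda_{\Theta}$, so that $\bigl(a_{\alf,\betf}\!\left(\tfrac{1}{U},\tfrac{1}{T}\right)\Lambda_{\Theta}\bigr)^* = B^{-\top}\Lambda_{\Theta}^*$. Because $a_{\alf,\betf}$ is a diagonal operator it is symmetric, so $B^{-\top} = B^{-1}$, and the inversion formula recorded in the excerpt, $a_{\alf,\betf}^{-1}(U,T) = a_{\alf,\betf}\!\left(\tfrac{1}{U},\tfrac{1}{T}\right)$, gives $B^{-1} = a_{\alf,\betf}(U,T)$. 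Combining with the first claim, $B^{-\top}\Lambda_{\Theta}^* = a_{\alf,\betf}(U,T)\,\Lambda_{\Theta}^* = a_{\alf,\betf}(U,T)\,u_{-\Theta}^{\top}\mathbb{Z}^d$, as asserted. There is no genuine obstacle here; the only point requiring care is the bookkeeping of transposes — in particular using that $a_{\alf,\betf}$ is diagonal (hence symmetric) and invoking the stated inversion identity rather than recomputing it — and noting that all the matrices involved lie in $GL_d(\mathbb{R})$ so that duals are honest lattices.
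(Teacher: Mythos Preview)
Your proposal is correct and is precisely the direct verification the paper has in mind; the paper states this observation without proof, leaving it as something to be ``directly checked,'' and your reduction to the identity $(B\Lambda)^* = B^{-\top}\Lambda^*$ together with the block computations for $u_\Theta^{-\top}$ and the symmetry of the diagonal operator $a_{\alf,\betf}$ is exactly that check.
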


\vskip+0.3cm

\subsection{A special case: approximations with weights}

For convenience of further use, we will slightly modify the statement of Observation \ref{geom_sets} for the case of approximations with weights (Section \ref{weightssection}); we only formulate it for the functions of the form $b  f_1$, where $b>0$ is a constant.

Let $\rho_i, \sigma_j$ be as in \ref{weightsdef}, and suppose $\alpha_i(T) = T^{\rho_i}; \beta_j(T) = T^{\sigma_j}$. We set

$$
a_{\rho, \sigma}(T): = a_{\alf, \betf}\left(T^{-1}, T \right) = diag(T^{\rho_1}, \ldots, T^{\rho_n}, T^{- \sigma_1}, \ldots, T^{-\sigma_m}) \in SL_d(\mathbb{R}).
$$

We also define the parallelepipeds $P_{\rrho, \sigma}(b)$ by

$$
    P_{\rrho, \sigma}(b): = \{ {\bf v} = (v_1, \ldots, v_d) \in \mathbb{R}^d: \,\,\,\, |v_i| \leq b^{\frac{\rho_i}{2}}, \, |v_{n+j}| \leq b^{\frac{\sigma_j}{2}}, \,\,\, i = 1, \ldots,. n; \,\, j = 1, \ldots, m \}.
    $$

     A direct calculation shows that 

    $$
      P_{\rrho, \sigma}(b)^*  = \{ {\bf v} = (v_1, \ldots, v_d) \in \mathbb{R}^d: \,\,\,\, |v_i| \leq b^{\frac{2-\rho_i}{2}}, \, |v_{n+j}| \leq b^{\frac{2-\sigma_j}{2}}, \,\,\, i = 1, \ldots, n; \,\, j = 1, \ldots, m \}
    $$

    and $\left( \frac{1}{c} P_{\rrho, \sigma}(z)\right)^* = \frac{1}{c^{d-1}} P_{\rrho, \sigma}(z)^*$.

The criterion below follows directly from Observation \ref{geom_sets}.

\begin{obs}\label{logarithmic criterion}
    
    \begin{enumerate}
    \item The matrix $\Theta \in {\bf M}_{n,m}$ is $(bf_1; \rho, \sigma)$-approximable if and only if

    $$
    a_{\rho, \sigma}(T) \Lambda_{\Theta} \cap P_{\rrho, \sigma}(b) \neq \{ {\bf 0} \}
    $$

    for an unbounded from above set of $T$.
    
        \item\label{inhom_unif} The pair $(\Theta, \pmb{\eta})$ is $(bf_1; \rho, \sigma)$-Dirichlet if and only if

    $$
    a_{\rho, \sigma}(T) \Lambda_{\Theta, \pmb{\eta}} \cap P_{\rrho, \sigma}(b) \neq \{ {\bf 0} \}
    $$

    for all $T$ large enough.

    \item The matrix $\Theta^{\top} \in {\bf M}_{m,n}$ is $(bf_1; \ssigma, \rrho)$-approximable if and only if

    $$
    a_{\rho, \sigma}\left(\frac{1}{T}\right) \Lambda_{-\Theta}^{\top} \cap P_{\rrho, \sigma}(b) \neq \{ {\bf 0} \}
    $$

    for an unbounded from above set of $T$.
    \end{enumerate}
\end{obs}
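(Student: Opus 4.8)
The plan is to obtain all three equivalences directly from Observation~\ref{geom_sets}, by substituting the power weights $\alpha_i(T)=T^{\rho_i}$, $\beta_j(T)=T^{\sigma_j}$ together with the approximating function $g=bf_1$ (respectively $f=bf_1$), and then absorbing the constant $b$ into a reparametrization of the diagonal flow $a_{\rho,\sigma}$.

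For part~(1): by the first bullet of Observation~\ref{geom_sets}, $\Theta$ is $(bf_1;\rho,\sigma)$-approximable if and only if $a_{\alf,\betf}(bf_1(T),T)\Lambda_\Theta\cap\mathcal{B}_d\neq\{{\bf 0}\}$ for an unbounded set of $T$. Using \eqref{star_condition}, for $T$ large the operator $a_{\alf,\betf}(bf_1(T),T)$ equals $\mathrm{diag}\big((T/b)^{\rho_1},\ldots,(T/b)^{\rho_n},T^{-\sigma_1},\ldots,T^{-\sigma_m}\big)$. I would then check coordinate by coordinate that for any $\lambda\in\mathbb{R}^d$ one has $a_{\alf,\betf}(bf_1(T),T)\lambda\in\mathcal{B}_d$ exactly when $a_{\rho,\sigma}(Tb^{-1/2})\lambda\in P_{\rho,\sigma}(b)$: the $i$-th condition ($i\le n$) reads $|\lambda_i|\le(b/T)^{\rho_i}$ on both sides, while the $(n+j)$-th condition reads $|\lambda_{n+j}|\le T^{\sigma_j}$ on both sides. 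Since $T\mapsto Tb^{-1/2}$ is an increasing bijection of $\mathbb{R}_+$, it carries unbounded sets to unbounded sets, and part~(1) follows.

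Parts~(2) and~(3) are the same computation with two cosmetic changes. For~(2) one invokes the second bullet of Observation~\ref{geom_sets} with the grid $\Lambda_{\Theta,\pmb{\eta}}$ and the quantifier ``for all $T$ large enough'' in place of ``for an unbounded set'' (and the usual $\pmb{\eta}={\bf 0}$ caveat about nonzero points); the identification of $\mathcal{B}_d$ with $P_{\rho,\sigma}(b)$ under $T\mapsto Tb^{-1/2}$ is literally the same, and a tail of large $T$ is again preserved by this substitution. For~(3) one uses the third bullet with $\Lambda_{-\Theta}^\top$ and $f=bf_1$, so that $a_{\alf,\betf}(U,f(U))=\mathrm{diag}\big(U^{-\rho_1},\ldots,U^{-\rho_n},(U/b)^{\sigma_1},\ldots,(U/b)^{\sigma_m}\big)$; here the substitution $U=Tb^{1/2}$ turns the condition $a_{\alf,\betf}(U,f(U))\lambda\in\mathcal{B}_d$ into $a_{\rho,\sigma}(1/T)\lambda\in P_{\rho,\sigma}(b)$, again preserving unboundedness.

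There is no genuine obstacle here; the only point requiring a moment's thought is the factor $\tfrac12$ in the exponents defining $P_{\rho,\sigma}(b)$. It is there precisely so that writing $b=b^{1/2}\cdot b^{1/2}$ distributes the constant symmetrically between the two coordinate blocks, which is exactly what allows a single rescaling $T\mapsto Tb^{\mp1/2}$ of the $SL_d(\mathbb{R})$-flow $a_{\rho,\sigma}$ to reproduce the asymmetric $b$-dependence of $a_{\alf,\betf}(bf_1(T),T)$; everything else is a direct substitution into Observation~\ref{geom_sets}.
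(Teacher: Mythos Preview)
Your argument is correct and is precisely the ``direct'' derivation from Observation~\ref{geom_sets} that the paper alludes to without spelling out. The coordinate check that $a_{\alf,\betf}(bf_1(T),T)\lambda\in\mathcal{B}_d$ is equivalent to $a_{\rho,\sigma}(Tb^{-1/2})\lambda\in P_{\rho,\sigma}(b)$, together with the monotone reparametrizations $T\mapsto Tb^{\mp1/2}$, is exactly what is needed, and your remark about the $\tfrac12$ in the exponents of $P_{\rho,\sigma}(b)$ is the correct explanation for that normalization.
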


\section{Applications}\label{applsection}

\subsection{Homogeneous approximations: Dirichlet's theorem}\label{Dirichlet}

To add some context, we will start with establishing a simple analog of the homogeneous Dirichlet's theorem for our setup with the weighted functions.

Suppose $\alpha_i, \beta_j$ are as in Section \ref{general def section}. Let $\alpha(T) : = \prod\limits_{i=1}^n \alpha_i(T)$ and $\beta(T): = \prod\limits_{j=1}^m \beta_j(T)$, and define the function $g_{\alf, \betf}(T): = \alpha^{-1} \left( \frac{1}{\beta(T)} \right)$. 

\begin{theorem}\label{Dirichlet homogeneous}
    Any matrix $\Theta \in { \bf M}_{n,m}$ is $(g_{\alf, \betf}, \alf, \betf)$-Dirichlet, and thus is $(g_{\alf, \betf}, \alf, \betf)$-approximable. 
\end{theorem}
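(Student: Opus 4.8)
The plan is to use the geometry-of-numbers dictionary (Observation \ref{geom_sets}) together with Minkowski's convex body theorem to exhibit a nonzero lattice point of $\Lambda_\Theta$ in the appropriate box for every large $T$. Concretely, recall that $(\Theta,\pmb{\eta})\in\widehat{\bf D}_{n,m}[\ug;\alf,\betf]$ (here with $\pmb\eta = \mathbf 0$, so we want nonzero points) if and only if $a_{\alf,\betf}(g(T),T)\Lambda_\Theta\cap\mathcal B_d\neq\{\mathbf 0\}$ for all large $T$. The operator $a_{\alf,\betf}(g(T),T)$ scales the first $n$ coordinates by $1/\alpha_i(g(T))$ and the last $m$ by $1/\beta_j(T)$, so the condition reads: there is a nonzero $(\p,\q)\in\Lambda_\Theta$ with $|\Theta\q+\p|_{\text{coord }i}\le\alpha_i(g(T))$ and $|q_j|\le\beta_j(T)$. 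Since $\Lambda_\Theta$ is unimodular, Minkowski's theorem guarantees such a point as soon as the box has volume $\ge 2^d$, i.e. as soon as $\prod_i\alpha_i(g(T))\cdot\prod_j\beta_j(T)\ge 1$.

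**Next I would** verify that the choice $g = g_{\alf,\betf}$, i.e. $g_{\alf,\betf}(T)=\alpha^{-1}\!\left(\tfrac1{\beta(T)}\right)$ with $\alpha=\prod_i\alpha_i$, $\beta=\prod_j\beta_j$, makes this volume condition hold with equality (or at least $\ge 1$) for all $T\ge 1$. Indeed $\alpha(g_{\alf,\betf}(T)) = \alpha\!\left(\alpha^{-1}(1/\beta(T))\right) = 1/\beta(T)$, so $\prod_i\alpha_i(g_{\alf,\betf}(T))\cdot\prod_j\beta_j(T) = \alpha(g_{\alf,\betf}(T))\cdot\beta(T) = 1$. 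Thus the box $a_{\alf,\betf}(g_{\alf,\betf}(T),T)\mathcal B_d$ has volume exactly $2^d$ and is a symmetric convex body, so Minkowski's first theorem yields a nonzero integer point of $\Lambda_\Theta$ inside it for \emph{every} $T\ge 1$ — not merely for large $T$ — which is in fact stronger than required. One small point to address: Minkowski's theorem in the non-strict form gives a nonzero point in the \emph{closed} box, which is exactly what the definition of $(\ug,\alf,\betf)$-Dirichlet asks (the inequalities there are non-strict), so no boundary issue arises. Alternatively one can replace $T$ by $T+\epsilon$ and pass to a limit using discreteness of $\Lambda_\Theta$.

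**Then** the second assertion, that such a matrix is also $(g_{\alf,\betf},\alf,\betf)$-approximable, follows formally: being $g$-Dirichlet means the system has a nonzero integer solution $\q$ for all large $T$, and letting $T\to\infty$ one obtains infinitely many distinct solutions (the solutions cannot stabilize at a fixed $\q$ because $\|\Theta\q-\pmb{\eta}\|_{\alf}$ would then have to tend to $0$, forcing $\Theta\q\in\mathbb Z^n$, a single point; in the homogeneous case $\pmb\eta=\mathbf 0$ and $\q$ nonzero, if $\q$ were fixed then $\beta_j(T)\ge|q_j|$ is automatic for large $T$ but $\alpha_i(g(T))\to 0$ forces $\Theta\q\in\mathbb Z^n$, and one simply notes that either infinitely many $\q$ occur, giving approximability, or else such a fixed $\q$ with $\Theta\q\in\mathbb Z^n$ exists, in which case every multiple $k\q$ works and we again get infinitely many solutions). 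I would phrase this implication as a short standalone remark, since it is the routine "uniform implies asymptotic" passage.

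**The main obstacle** I anticipate is purely bookkeeping rather than conceptual: one must make sure the extension of $\alpha_i,\beta_j$ to all of $\mathbb R_+$ via \eqref{star_condition}, the monotonicity and continuity assumptions, and the normalization $\alpha_i(1)=\beta_j(1)=1$ are all invoked correctly so that $\alpha^{-1}(1/\beta(T))$ is well-defined and lands in the domain where $a_{\alf,\betf}$ makes sense; in particular $1/\beta(T)\le 1$ for $T\ge 1$ so $g_{\alf,\betf}(T)\le 1$, and one uses \eqref{star_condition} to interpret $\alpha_i$ at arguments in $(0,1]$. Beyond that, the proof is a one-line application of Minkowski's convex body theorem via Dani's correspondence, so I expect it to be quite short.
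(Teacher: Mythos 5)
Your proof takes essentially the same approach as the paper's: translate via Observation~\ref{geom_sets} (Dani's correspondence) to the assertion $a_{\alf,\betf}(g_{\alf,\betf}(T),T)\Lambda_{\Theta}\cap\mathcal{B}_d\neq\{\mathbf 0\}$, observe that this lattice is unimodular because $\alpha(g_{\alf,\betf}(T))\cdot\beta(T)=1$, and apply Minkowski's convex body theorem to the cube $\mathcal{B}_d$ of volume $2^d$. Your added care about the boundary case of Minkowski and about the ``Dirichlet implies approximable'' passage (handling the possibility that only finitely many $\q$ recur, in which case $\Theta\q_0\in\mathbb{Z}^n$ and integer multiples give infinitely many solutions) is more explicit than the paper's one-line proof but is the same argument.
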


\begin{proof}
    Note that $m \left( \mathcal{B}_d \right) = 2^d$, and the lattice $a_{\alf, \betf} \left(g_{\alf, \betf}(T), T \right) \Lambda_{\Theta}$ is unimodular; now, the desired statement follows from Observation \ref{geom_sets} and Minkowski's convex body theorem.
\end{proof}

Let us notice that $g_{\alf, \betf}$ and $g_{\betf, \alf}$ satisfy a relation similar to \eqref{fgrelation}: namely, 

$$
g_{\betf, \alf}(T) = \frac{1}{g_{\alf, \betf} \left( \frac{1}{T} \right)}.
$$

\vskip+0.3cm

In the case of approximations with weights ($\alpha_i(T) = T^{\rho_i}, \, \beta_j(T) = T^{\sigma_j}$), due to the relation \eqref{weightsdef}, one has $g_{\betf, \alf}(T) = f_1(T) = \frac{1}{T}$ and so Theorem \ref{Dirichlet homogeneous} is the classical Dirichlet's theorem.

\subsection{Bad approximability: general setup}\label{bad_general}

The statement of Theorem \ref{Dirichlet homogeneous} motivates the following generalization of the classical definition:

\begin{definition}\label{generalbaddef}
     Matrix $\Theta \in {\bf M}_{n,m}$ is called { \it $(\alf, \betf)$-badly approximable} if it is not $(b \cdot g_{\alf, \betf}, \alf, \betf)$-approximable for some constant $b > 0$.  
\end{definition}

We denote the set of $(\alf, \betf)$-badly approximable $n \times m$ matrices by ${\bf BA}_{n,m}[\alf, \betf]$. We note that if $\alpha_i, \beta_j$ are quasimultiplicative, then by Theorem \ref{general_Khgr} the set ${\bf BA}_{n,m}[ \alf, \betf]$ is a Lebesgue nullset. One more analogy with the classical set of badly approximable matrices can be seen through a dynamical interpretation. 

\vskip+0.3cm

For a map $\Lambda(T): \,\, \mathbb{R}_+ \rightarrow X^1_d$, we call the trajectory ${\Lambda(T)}_{T \in \mathbb{R}_+}$ { \it bounded} if it is contained in a bounded subset of $X^1_d$. We define the sets

$$
X_d^1(\lambda): = \left\{ \Lambda \in X_d^1: \,\,\,\,  \Lambda \cap \lambda \mathcal{B}_d = \{ {\bf 0} \} \right\}.
$$

By Mahler's compactness criterion, the sets $X_d^1(\lambda)$ are compact in $X_d^1$, and any bounded subset of $X_d^1$ is contained in $X_d^1(\lambda)$ for some $\lambda > 0$; therefore, a trajectory is bounded if and only if there exists such a $\lambda>0$ that $\Lambda(T) \in X_d^1(\lambda)$ for all $T$ large enough.

Using Observation \ref{geom_sets} and quasimultiplicativity of functions $\alpha_i$ and $\beta_j$, one can show that $(\alf, \betf)$-badly approximable matrices are in a correspondence with bounded trajectories in $X^1_d$, thus generalizing a similar result for approximations with weights due to Kleinbock (\cite{K98}, Theorem 2.5):

\begin{proposition}\label{bad=bounded}
    Suppose all the functions $\alpha_i$ and $\beta_j$, defined as in Section \ref{general def section}, are quasimultiplicative. Matrix $\Theta \in {\bf M}_{n,m}$ is $(\alf, \betf)$-badly approximable if and only if the trajectory

    $$
    \left\{  a_{\alf, \betf} \left( g_{\alf, \betf}(T), T \right) \Lambda_{\Theta} \right\}_{T \in \mathbb{R}_+}
    $$

    is bounded in $X^1_d$.
\end{proposition}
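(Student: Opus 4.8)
The strategy is to unwind both sides of the claimed equivalence into statements about when the lattice $a_{\alf,\betf}(g_{\alf,\betf}(T),T)\Lambda_\Theta$ is bounded away from $\mathbf 0$, and then to bridge the gap between the continuous parameter $T$ appearing in the definition of approximability and the dynamical trajectory. First I would note that, by construction, $\det a_{\alf,\betf}(g_{\alf,\betf}(T),T) = \frac{1}{\alpha(g_{\alf,\betf}(T))\beta(T)} = 1$ since $g_{\alf,\betf}(T) = \alpha^{-1}(1/\beta(T))$, so the trajectory does live in $X^1_d$ and Mahler's compactness criterion applies: boundedness of the trajectory is equivalent to the existence of $\lambda > 0$ with $a_{\alf,\betf}(g_{\alf,\betf}(T),T)\Lambda_\Theta \cap \lambda\mathcal B_d = \{\mathbf 0\}$ for all large $T$.

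\textbf{The forward direction.} Suppose $\Theta$ is $(\alf,\betf)$-badly approximable, so it is not $(b\cdot g_{\alf,\betf},\alf,\betf)$-approximable for some $b>0$; i.e. the system $\|\Theta_i\q-\eta_i\| \le \alpha_i(b\,g_{\alf,\betf}(T))$ (here with $\pmb\eta=\mathbf 0$), $|q_j|\le\beta_j(T)$ has only finitely many solutions. Via Observation \ref{geom_sets}, this says $a_{\alf,\betf}(b\,g_{\alf,\betf}(T),T)\Lambda_\Theta \cap \mathcal B_d = \{\mathbf 0\}$ for all large $T$. The point now is to convert the scaling by $b$ inside the argument of $g_{\alf,\betf}$ into a uniform scaling of the cube: since the $\alpha_i$ are quasimultiplicative, $\alpha_i(b\,g_{\alf,\betf}(T))$ and $\alpha_i(g_{\alf,\betf}(T))$ differ by a bounded multiplicative factor uniformly in $T$ (using \eqref{quasireformulation} and Observation \ref{quasiproperties} applied to $\alpha_i$, $\alpha^{-1}$ and their compositions), and likewise the $\beta_j$ contribute nothing since the second argument $T$ is untouched; hence $a_{\alf,\betf}(g_{\alf,\betf}(T),T) = D_T\, a_{\alf,\betf}(b\,g_{\alf,\betf}(T),T)$ for a diagonal $D_T$ whose entries lie in a fixed compact subset of $(0,\infty)$ independent of $T$. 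Therefore $a_{\alf,\betf}(g_{\alf,\betf}(T),T)\Lambda_\Theta$ avoids $\lambda\mathcal B_d$ for a fixed $\lambda>0$ and all large $T$, which is exactly boundedness of the trajectory.

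\textbf{The converse.} If the trajectory is bounded, fix $\lambda>0$ with $a_{\alf,\betf}(g_{\alf,\betf}(T),T)\Lambda_\Theta \cap \lambda\mathcal B_d = \{\mathbf 0\}$ for all large $T$. Choose $b>0$ small enough that, by the same quasimultiplicativity estimate run in reverse, $a_{\alf,\betf}(b\,g_{\alf,\betf}(T),T)\Lambda_\Theta \subseteq \lambda^{-1}\!\cdot\! a_{\alf,\betf}(g_{\alf,\betf}(T),T)\Lambda_\Theta$ in the sense of the corresponding diagonal scaling being dominated by $\lambda$ coordinatewise; concretely, picking $b$ so small that each $\alpha_i(b\,g_{\alf,\betf}(T)) \le \lambda\,\alpha_i(g_{\alf,\betf}(T))$ for all large $T$ (possible because quasimultiplicativity gives $\alpha_i(bu)/\alpha_i(u) \to 0$ in a controlled way as $b\to 0$, uniformly in $u$). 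Then $a_{\alf,\betf}(b\,g_{\alf,\betf}(T),T)\Lambda_\Theta \cap \mathcal B_d = \{\mathbf 0\}$ for all large $T$, so by Observation \ref{geom_sets} $\Theta \notin {\bf W}_{n,m}[b\,g_{\alf,\betf};\alf,\betf]$, i.e. $\Theta$ is $(\alf,\betf)$-badly approximable.

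\textbf{Main obstacle.} The one genuinely delicate point is making the quasimultiplicativity estimates uniform in $T$ while the parameter $b$ is being chosen, and handling the discrete-versus-continuous issue implicit in \eqref{quasidefinition1} (which is stated along a geometric progression $M^k$) versus the continuum of $T$'s in the definition of approximability. I would handle this by using the reformulation \eqref{quasireformulation}, which holds for all $T$ and all large $R$, applied to each $\alpha_i$ and to the composite function $u \mapsto \alpha_i(g_{\alf,\betf}(u))$ — the latter being quasimultiplicative by the closure properties in Observation \ref{quasiproperties} (composition, inversion, products). Once one knows $\alpha_i(g_{\alf,\betf}(bT))$ and $\alpha_i(g_{\alf,\betf}(T))$ are comparable up to a factor that depends only on $b$ (and tends to $0$ or $\infty$ appropriately as $b\to 0$ or $b\to\infty$), both directions close cleanly. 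It is also worth remarking that the $\pmb\eta$ appearing in the definition of $(\ug,\alf,\betf)$-approximability plays no role here since bad approximability is a homogeneous condition, so one works throughout with $\Lambda_\Theta$ rather than $\Lambda_{\Theta,\pmb\eta}$.
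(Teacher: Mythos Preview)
Your forward direction is correct and matches the paper's argument: multiplying $g_{\alf,\betf}$ by $b$ only perturbs the first $n$ diagonal entries of $a_{\alf,\betf}$, and quasimultiplicativity of the $\alpha_i$ bounds that perturbation uniformly, so a nonzero lattice point outside $\mathcal B_d$ stays outside a fixed smaller cube $\lambda\mathcal B_d$.

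The converse, however, has a genuine gap. You write $a_{\alf,\betf}(b\,g(T),T) = D_T\cdot a_{\alf,\betf}(g(T),T)$ and propose to choose $b$ so that every diagonal entry of $D_T$ is at least $\lambda^{-1}$. But the last $m$ diagonal entries of $D_T$ are identically~$1$, independently of $b$: the second argument $T$ is untouched. So no choice of $b$ makes those entries exceed $\lambda^{-1}$. Concretely, a nonzero lattice point $w\in a_{\alf,\betf}(g(T),T)\Lambda_\Theta$ might escape $\lambda\mathcal B_d$ only through a $\beta$-coordinate, say $\lambda<|w_{n+j}|\le 1$, while all $\alpha$-coordinates satisfy $|w_i|\le\lambda$. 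Your scaling $D_T$ leaves the $\beta$-coordinates fixed and enlarges the $\alpha$-coordinates, so $D_T w$ can still lie in $\mathcal B_d$; the desired conclusion $a_{\alf,\betf}(bg(T),T)\Lambda_\Theta\cap\mathcal B_d=\{\mathbf 0\}$ does not follow.

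The paper handles this by also rescaling the second argument of $a_{\alf,\betf}$: it passes from $T$ to $U=l^{1/k_1}T$ (with $l=\lambda$), using quasimultiplicativity of the $\beta_j$ to absorb the factor $\lambda$ in the $\beta$-coordinates, and then separately uses quasimultiplicativity of $g$ and of $\alpha_i\circ g$ to convert the resulting shift inside $g$ into a multiplicative factor in front of $g$. Your ``main obstacle'' paragraph gestures toward the composite $u\mapsto\alpha_i(g_{\alf,\betf}(u))$ but never introduces the needed change of variable in $T$, which is the missing idea.
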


\begin{proof}
    Let $g = g_{\alf, \betf}$. First, let us note that by Observation \ref{quasiproperties} the function $g_{\alf, \betf}$ is quasimultiplicative. Fix such $1< k_1 \leq k_2$ that the functions $\alpha_i$ and $\beta_j$ satisfy \eqref{quasireformulation} with these constants, and $g$ together with $\alpha_i \circ g$ satisfy \eqref{quasireformulation} with constants $-k_2$ and $-k_1$.

    \begin{itemize}
        \item Suppose $\Theta$ is $(\alf, \betf)$-badly approximable: by Observation \ref{geom_sets} there exists such a $b$ that 

        $$
    a_{\alf, \betf} \left( b g(T), T \right) \Lambda_{\Theta} \cap \mathcal{B}_d = \{ {\bf 0} \}.
    $$

    By quasimultiplicativity of $\alpha_i$, the diagonal operator

    $$
    a_{\alf, \betf} \left( g(T), T \right) a^{-1}_{\alf, \betf} \left( b g(T), T \right)
    $$

    contracts not more than by $b^{k_2}$ in each direction, which implies

    $$
    a_{\alf, \betf} \left(  g(T), T \right) \Lambda_{\Theta} \cap b^{k_2} \mathcal{B}_d \subseteq b^{k_2} \left( a_{\alf, \betf} \left( b g(T), T \right) \Lambda_{\Theta} \cap \mathcal{B}_d \right) = \{ {\bf 0} \}.
    $$

    \item Suppose the trajectory is bounded, and $0<l<1$ is fixed so that for any $T \gg 1$ one has 

    $$
    a_{\alf, \betf} \left(  g(T), T \right) \Lambda_{\Theta} \cap l \mathcal{B}_d = \{ {\bf 0} \};
    $$

    equivalently, 

    $$
    a_{l\alf, l\betf} \left(  g(T), T \right) \Lambda_{\Theta} \cap \mathcal{B}_d = \{ {\bf 0} \}.
    $$

    By quasimultiplicativity of the functions $\alpha_i \circ g$ and $\beta_j$, the diagonal operator 

    $$
      a_{\alf, \betf} \left(  g\left(l^{-\frac{1}{k_1}} T\right), l^{\frac{1}{k_1}}T \right) \cdot a_{l\alf, l\betf}^{-1} \left(  g(T), T \right)
    $$

    is expanding. Thus, via the change of variable $U = l^{\frac{1}{k_1}}T$ one can see that for $U \gg 1$

    $$
     a_{\alf, \betf} \left(  g\left(l^{-\frac{2}{k_1}} U\right), U \right)\Lambda_{\Theta} \cap \mathcal{B}_d = \{ {\bf 0} \}.
    $$

    Finally, quasimultiplicativativity of $g$ implies that the diagonal operator 

    $$
      a_{\alf, \betf} \left(  l^{\frac{2k_2}{k_1}} g\left( U \right), U \right) \cdot a_{\alf, \betf}^{-1} \left(  g\left(l^{-\frac{2}{k_1}} U\right), U \right)
    $$

    is expanding, by which

     $$
    a_{\alf, \betf} \left(  l^{\frac{2k_2}{k_1}} g\left( U \right), U \right) \Lambda_{\Theta} \cap \mathcal{B}_d = \{ {\bf 0} \}.
    $$

    for $U$ large enough, and thus $\Theta$ is $(\alf, \betf)$-badly approximable.

    \end{itemize}
\end{proof}

Proposition \ref{bad=bounded} immediately implies the analog of the classical fact: the matrices $\Theta$ and $\Theta^{\top}$ are badly approximable or not simultaneously.

\begin{corollary}\label{transpbad}
    Let $\alf$ and $\betf$ be tuples of quasimultiplicative functions. Suppose $\Theta \in {\bf M}_{n,m}$ is $(\alf, \betf)$-badly approximable. Then, $\Theta^{\top}$ is $(\betf, \alf)$-badly approximable.
\end{corollary}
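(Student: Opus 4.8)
The plan is to derive the corollary from Proposition~\ref{bad=bounded} and the transference principle for the cube. By Proposition~\ref{bad=bounded}, $\Theta$ is $(\alf,\betf)$-badly approximable if and only if the trajectory $\Lambda(T):=a_{\alf,\betf}(g_{\alf,\betf}(T),T)\Lambda_{\Theta}$, $T\ge 1$, is bounded in $X^1_d$; each $\Lambda(T)$ is unimodular by the choice of $g_{\alf,\betf}$. First I would pass to dual lattices: since $\mathcal{B}_d$ is self-pseudo-compound (Observation~\ref{obs_cube}), part~\ref{kostya} of Lemma~\ref{lem21} applied to $\Lambda(T)$ and to $\Lambda(T)^{*}$, together with $\Lambda(T)^{**}=\Lambda(T)$, shows that $\{\Lambda(T)\}_{T\ge1}$ is bounded if and only if the dual trajectory $\{\Lambda(T)^{*}\}_{T\ge1}$ is: if $\Lambda(T)\cap\mu\mathcal{B}_d=\{{\bf 0}\}$ for all large $T$ then $\Lambda(T)^{*}\cap(\mu/c_d)^{d-1}\mathcal{B}_d=\{{\bf 0}\}$ for all large $T$, and symmetrically. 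By Observation~\ref{obs_dual}, $\Lambda(T)^{*}=a_{\alf,\betf}(1/g_{\alf,\betf}(T),\,1/T)\,\Lambda_{-\Theta}^{\top}$.

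Next I would reshape this dual trajectory into the one that Proposition~\ref{bad=bounded} attaches to $\Theta^{\top}\in{\bf M}_{m,n}$ with the tuples $(\betf,\alf)$, namely $\{a_{\betf,\alf}(g_{\betf,\alf}(S),S)\Lambda_{\Theta^{\top}}\}_{S\ge1}$. Let $Q$ be the signed permutation of $\mathbb{R}^d$ that interchanges the block of the first $n$ coordinates with that of the last $m$ and then reverses the signs of the (new) last $n$ coordinates. It fixes $\mathcal{B}_d$ and $\mathbb{Z}^d$, one checks directly that $Q\,\Lambda_{-\Theta}^{\top}=\Lambda_{\Theta^{\top}}$, and conjugation by $Q$ turns $a_{\alf,\betf}(V,W)$ into $a_{\betf,\alf}(W,V)$; hence $\{\Lambda(T)^{*}\}$ is bounded if and only if $\{a_{\betf,\alf}(1/T,\,1/g_{\alf,\betf}(T))\,\Lambda_{\Theta^{\top}}\}_{T\ge1}$ is. Finally, put $S:=1/g_{\alf,\betf}(T)$; since $g_{\alf,\betf}$ decreases from $1$ to $0$ on $[1,\infty)$, the map $T\mapsto S$ is an increasing homeomorphism of $[1,\infty)$ onto itself, and using $g_{\betf,\alf}=g_{\alf,\betf}^{-1}$ together with $g_{\betf,\alf}(1/x)=1/g_{\betf,\alf}(x)$ — both immediate consequences of \eqref{star_condition} — one obtains $1/T=g_{\betf,\alf}(S)$, whence $a_{\betf,\alf}(1/T,1/g_{\alf,\betf}(T))=a_{\betf,\alf}(g_{\betf,\alf}(S),S)$. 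Reindexed by $S$ this is exactly the $\Theta^{\top}$-trajectory above, and by Proposition~\ref{bad=bounded} its boundedness is equivalent to $\Theta^{\top}$ being $(\betf,\alf)$-badly approximable. Chaining the equivalences proves the corollary (and, in fact, the converse implication as well).

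The delicate point is this last matching of diagonal flows: the substitution $S=1/g_{\alf,\betf}(T)$ has to go into both arguments of $a_{\betf,\alf}(\cdot,\cdot)$ compatibly, which works only because $g_{\betf,\alf}=g_{\alf,\betf}^{-1}$ and because $g_{\alf,\betf}$ inherits the reflection symmetry $g_{\alf,\betf}(1/T)=1/g_{\alf,\betf}(T)$ from \eqref{star_condition} — without the latter the two flows would agree only in direction, not as subsets of $X^1_d$. Quasimultiplicativity of the $\alpha_i$ and $\beta_j$ is used only through the two applications of Proposition~\ref{bad=bounded}, not in the transference step itself.
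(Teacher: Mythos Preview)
Your proof is correct and follows essentially the route the paper has in mind: the paper gives no explicit argument for Corollary~\ref{transpbad}, merely stating that it ``immediately'' follows from Proposition~\ref{bad=bounded}, and what you have written is exactly the natural unfolding of that implication --- bounded trajectory $\Rightarrow$ bounded dual trajectory via Lemma~\ref{lem21}\ref{kostya} and Observation~\ref{obs_cube}, then reindexing by the block swap $Q$ and the substitution $S=1/g_{\alf,\betf}(T)$ to land on the $(\betf,\alf)$-trajectory of $\Theta^{\top}$. Your verification that $g_{\betf,\alf}=g_{\alf,\betf}^{-1}$ and that $g_{\alf,\betf}$ inherits the reflection symmetry from~\eqref{star_condition} is the only non-formal point, and it is correct; your remark that the argument actually yields the equivalence (not just the one-sided implication stated) is also right.
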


We obtain the following criterion for bad approximability as a corollary of Theorem \ref{Jarnik_general} and Theorem \ref{weighted_impr}:

{

\begin{corollary}\label{general ba criterion}
Let $\alf$ and $\betf$ be tuples of quasimultiplicative functions.

\begin{itemize}
    \item If a matrix $\Theta \in {\bf M}_{n,m}$ is  $(\alf, \betf)$-badly approximable, then there exists $k > 0$ such that any pair $(\Theta, \pmb{\eta})$ is $(k g_{\alf, \betf}, \alf, \betf)$-Dirichlet:

        $$
        {\widehat{\bf D}^{\Theta} [kg_{\alf, \betf},\alf, \betf]} = \mathbb{R}^n.
        $$

       \item If a matrix $\Theta \in {\bf M}_{n,m}$ is not $(\alf, \betf)$-badly approximable, then for any $k > 0$  almost any pair $(\Theta, \pmb{\eta})$ is not $(k g_{\alf, \betf}, \alf, \betf)$-Dirichlet:

        $$
        m \left({\widehat{\bf D}^{\Theta} [kg_{\alf, \betf},\alf, \betf]} \right) = 0.
        $$
\end{itemize}
\end{corollary}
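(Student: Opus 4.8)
The plan is to deduce both items straight from Theorem~\ref{Jarnik_general} and Theorem~\ref{weighted_impr}, using Corollary~\ref{transpbad} and Definition~\ref{generalbaddef} as a bridge. By Corollary~\ref{transpbad}, $\Theta$ is $(\alf,\betf)$-badly approximable if and only if $\Theta^{\top}$ is $(\betf,\alf)$-badly approximable, which by Definition~\ref{generalbaddef} means exactly that $\Theta^{\top}\notin{\bf W}_{m,n}[b\, g_{\betf,\alf};\betf,\alf]$ for some $b>0$; its negation says $\Theta^{\top}\in{\bf W}_{m,n}[b\, g_{\betf,\alf};\betf,\alf]$ for every $b>0$. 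These are precisely the hypotheses of the two theorems once one takes $f=b\, g_{\betf,\alf}$. I would first record the elementary identities $g_{\alf,\betf}^{-1}=g_{\betf,\alf}$ and $g_{\alf,\betf}(1/T)=1/g_{\alf,\betf}(T)$ (immediate from $\alpha(1/T)=1/\alpha(T)$, $\beta(1/T)=1/\beta(T)$ for $\alpha=\prod_i\alpha_i$, $\beta=\prod_j\beta_j$, and already essentially noted after Theorem~\ref{Dirichlet homogeneous}), from which a one-line computation gives that the function attached to $f=b\, g_{\betf,\alf}$ through \eqref{fgrelation} is $g(T)=1/f^{-1}(1/T)=g_{\alf,\betf}(bT)$. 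Both $f$ and this $g$ are continuous and strictly decreasing, so the theorems apply; and $g_{\alf,\betf}$ is quasimultiplicative by Observation~\ref{quasiproperties}, so $g_{\alf,\betf}(bT)$ and $g_{\alf,\betf}(T)$ agree up to a multiplicative constant depending only on $b$ and the quasimultiplicativity data.

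For the first item, suppose $\Theta$ is $(\alf,\betf)$-badly approximable. Then $\Theta^{\top}$ is $(\betf,\alf)$-badly approximable, so fix $b>0$ with $\Theta^{\top}\notin{\bf W}_{m,n}[b\, g_{\betf,\alf};\betf,\alf]$ and apply Theorem~\ref{Jarnik_general} with $f=b\, g_{\betf,\alf}$; this yields $\widehat{\bf D}_{n,m}^{\Theta}[g_{\alf,\betf}(b\,\cdot)\,;\,C\alf,C\betf]=\mathbb{R}^n$ with $C=d\cdot d!$. It then remains to absorb the dilated norms $C\alf,C\betf$ back into $\alf,\betf$ and to replace the composition $g_{\alf,\betf}(b\,\cdot)$ by an honest multiple $k\, g_{\alf,\betf}$. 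I would carry this out exactly as in the proof of Proposition~\ref{bad=bounded}: for a given scale $S$, choose $T$ comparable to (and smaller than) $S$ so that a vector $\q$ solving the $(g_{\alf,\betf}(b\,\cdot);C\alf,C\betf)$-system at scale $T$ still satisfies $|q_j|\le\beta_j(S)$, and then check, using quasimultiplicativity of the $\alpha_i,\beta_j$ and of $g_{\alf,\betf}$, that it solves the $(k\, g_{\alf,\betf};\alf,\betf)$-system at scale $S$ provided $k$ is chosen large enough in terms of $b$, $d$ and the quasimultiplicativity constants. This gives $\widehat{\bf D}^{\Theta}[k\, g_{\alf,\betf};\alf,\betf]=\mathbb{R}^n$.

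For the second item, suppose $\Theta$ is not $(\alf,\betf)$-badly approximable, so $\Theta^{\top}$ is not $(\betf,\alf)$-badly approximable and hence $\Theta^{\top}\in{\bf W}_{m,n}[b\, g_{\betf,\alf};\betf,\alf]$ for every $b>0$. I would fix once and for all an $\varepsilon>0$ small enough for the conclusion of Theorem~\ref{weighted_impr}; as in Proposition~\ref{DI with weights}, this threshold depends only on $d$ and on the quasimultiplicativity constants of the $\alpha_i,\beta_j$, and not on $f$. Given the target $k>0$, I would then pick $b>0$ so small that $\tilde{g}_{\varepsilon}(T)=\varepsilon\, g(T/\varepsilon)=\varepsilon\, g_{\alf,\betf}((b/\varepsilon)\,T)\ge k\, g_{\alf,\betf}(T)$ for all large $T$; this is possible because, by quasimultiplicativity of $g_{\alf,\betf}$, driving $b/\varepsilon$ to zero makes $g_{\alf,\betf}((b/\varepsilon)T)$ exceed any prescribed multiple of $g_{\alf,\betf}(T)$. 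For this $b$ one has $\Theta^{\top}\in{\bf W}_{m,n}[b\, g_{\betf,\alf};\betf,\alf]$, so Theorem~\ref{weighted_impr} applies with $f=b\, g_{\betf,\alf}$ (whence $g(T)=g_{\alf,\betf}(bT)$) and gives $m(\widehat{\bf D}^{\Theta}[\tilde{g}_{\varepsilon};\alf,\betf])=0$. Since $k\, g_{\alf,\betf}\le\tilde{g}_{\varepsilon}$ and a Dirichlet set only grows when the approximating function grows, $\widehat{\bf D}^{\Theta}[k\, g_{\alf,\betf};\alf,\betf]\subseteq\widehat{\bf D}^{\Theta}[\tilde{g}_{\varepsilon};\alf,\betf]$, which is null.

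All the conceptual content sits in Theorems~\ref{Jarnik_general} and~\ref{weighted_impr}; the main obstacle I anticipate is purely technical bookkeeping with quasimultiplicativity --- absorbing the $d\cdot d!$ dilation of the norms and turning the composition $g_{\alf,\betf}(b\,\cdot)$ into a genuine constant multiple of $g_{\alf,\betf}$ --- together with keeping the quantifiers in order in the second item: the $\varepsilon$ furnished by Theorem~\ref{weighted_impr} must be pinned down first, uniformly in $f$, and only afterwards $b=b(\varepsilon,k)$ chosen, since otherwise the dependence $b=b(\varepsilon)$ and $\varepsilon=\varepsilon(f)=\varepsilon(b)$ would be circular.
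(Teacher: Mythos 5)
Your proof is correct and follows the route the paper itself indicates (``We obtain the following criterion\ldots as a corollary of Theorem~\ref{Jarnik_general} and Theorem~\ref{weighted_impr}''); the paper does not write out a proof for the general corollary but does give an explicit one for the weighted analogue, Corollary~\ref{bad_weights}, and yours runs parallel to it. The one genuine difference is the mechanism for transferring bad approximability from $\Theta$ to $\Theta^{\top}$: you invoke Corollary~\ref{transpbad}, which the paper derives dynamically via Proposition~\ref{bad=bounded}, whereas the paper's weighted proof re-derives this transference directly from the Mahler-type Lemma~\ref{lem21}\ref{kostya} (stated there as Proposition~\ref{homogeneous bad transference}) precisely in order to track explicit constants $K(b)$ and $\kappa(b)$. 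In the general quasimultiplicative setting, where no explicit constants are claimed, your choice is the more natural one. Your computation $g=g_{\alf,\betf}(b\,\cdot)$ from $f=b\,g_{\betf,\alf}$ is right, the order of quantifiers in the second item (fix $\varepsilon$ first, then shrink $b$) is correctly handled and is indeed the crux, and the absorption of the $C\alf, C\betf$ dilation and of the argument-scaling $b$ into a single multiplicative constant $k$ via quasimultiplicativity, while only sketched, is a routine estimate of the kind already worked out in the proof of Proposition~\ref{bad=bounded}.
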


}

\subsection{Bad and very good approximability: approximations with weights}\label{weights_bad_section}

At the end of this section we switch to the special case of approximations with weights (see Section \ref{weightssection} for definitions and notation). We will work with the following classical definitions (the first is a special case of Definition \ref{generalbaddef}):

\begin{definition}
\begin{itemize}
  \item Matrix $\Theta \in {\bf M}_{n,m}$ is called { \it $(\rrho, \ssigma)$-badly approximable} with constant $b$ if it is not $(bf_1; \rrho, \ssigma)$-approximable, and  { \it $(\rrho, \ssigma)$-badly approximable} if it is $(\rrho, \ssigma)$-badly approximable with constant $b$ for some $b > 0$.  

  \item Matrix $\Theta \in {\bf M}_{n,m}$ is called { \it $(\rrho, \ssigma)$-very well approximable} if it is $(f_{1+ \varepsilon}; \rrho, \ssigma)$-approximable for some $\varepsilon > 0$. 
\end{itemize}
    
\end{definition}

This definition of bad approximability aligns with the general definition from Section \ref{bad_general}, since in this case one has $g_{\alf, \betf} = f_1$, as noted in Section \ref{Dirichlet}.

{

We start with slightly strengthening the statement of Corollary \ref{transpbad} by providing an explicit constant; this constant is a direct consequence of Lemma \ref{lem21} \ref{kostya} due to German and Evdokimov, however as far as we know it is not explicitly documented in any literature. 

\begin{proposition}\label{homogeneous bad transference}
    Suppose $\Theta \in {\bf M}_{n,m}$ is $(\rrho, \ssigma)$-badly approximable with constant $b$, and let $r_+ = \max\limits_{i,j} \{ \rho_i, \sigma_j \}$. Let $c$ be as in Lemma \ref{lem21} \ref{kostya}. Then, $\Theta^{\top}$ is $(\ssigma, \rrho)$-badly approximable with constant $\frac{b^{\frac{2}{r_+}-1}}{d}$.
\end{proposition}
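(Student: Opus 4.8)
The proposition is a purely homogeneous transference statement, so the natural approach is to pass to the lattice language of Observation~\ref{logarithmic criterion} and apply Lemma~\ref{lem21}\ref{kostya} (the German--Evdokimov strengthening of Mahler). I would argue by contrapositive: suppose $\Theta^{\top}$ is \emph{not} $(\ssigma, \rrho)$-badly approximable with constant $b' := \tfrac{b^{2/r_+-1}}{d}$, and deduce that $\Theta$ is not $(\rrho,\ssigma)$-badly approximable with constant $b$. By Observation~\ref{logarithmic criterion}(3), the failure means there is an unbounded set of $T$ with $a_{\rho,\sigma}(\tfrac1T)\Lambda^{\top}_{-\Theta} \cap P_{\rrho,\ssigma}(b') \ne \{\mathbf 0\}$. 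Since $a_{\rho,\sigma}(\tfrac1T)\Lambda^{\top}_{-\Theta}$ is the dual of $a_{\rho,\sigma}(T)\Lambda_{\Theta}$ (Observation~\ref{obs_dual}, noting $\Lambda_{\Theta}^{*}=\Lambda_{-\Theta}^{\top}$), and $P_{\rrho,\ssigma}(b')$ is (up to scaling) the pseudo-compound of $P_{\rrho,\ssigma}$ of some parameter, Lemma~\ref{lem21}\ref{kostya} will convert a nonzero dual point in the pseudo-compound into a nonzero point of $a_{\rho,\sigma}(T)\Lambda_{\Theta}$ in a $c$-dilate of the original parallelepiped.

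\textbf{Key steps in order.} First I would pin down the pseudo-compound bookkeeping: from the display after the definition of $P_{\rrho,\ssigma}(b)$ in Section~2.2 we have $P_{\rrho,\ssigma}(z)^{*}=\{|v_i|\le z^{(2-\rho_i)/2},\,|v_{n+j}|\le z^{(2-\sigma_j)/2}\}$, and the scaling rule $(\tfrac1c P_{\rrho,\ssigma}(z))^{*}=\tfrac1{c^{d-1}}P_{\rrho,\ssigma}(z)^{*}$. So I need to choose $z$ and $c$ so that $\tfrac1{c^{d-1}} P_{\rrho,\ssigma}(z)^{*}$ is contained in $P_{\rrho,\ssigma}(b')$, which is a coordinatewise inequality $c^{-(d-1)} z^{(2-\rho_i)/2}\le (b')^{\rho_i/2}$ and similarly for $\sigma_j$; this is where the exponent $\tfrac{2}{r_+}-1$ enters, since the worst coordinate is the one with the largest weight $r_+$ (the exponents $(2-\rho_i)/2$ and $\rho_i/2$ are each monotone in $\rho_i$, so the binding constraint is at $\rho_i=r_+$). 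Second, apply Observation~\ref{logarithmic criterion}(3) and Observation~\ref{obs_dual} to reinterpret the hypothesis as: for unboundedly many $T$, the dual lattice $\bigl(a_{\rho,\sigma}(T)\Lambda_{\Theta}\bigr)^{*}$ meets $\tfrac1{c^{d-1}} P_{\rrho,\ssigma}(z)^{*}= (\tfrac1c P_{\rrho,\ssigma}(z))^{*}$ nontrivially. Third, invoke Lemma~\ref{lem21}\ref{kostya} with the standard parallelepiped $\mathcal P = \tfrac1c P_{\rrho,\ssigma}(z)$: since its pseudo-compound meets the dual nontrivially, $c\cdot \tfrac1c P_{\rrho,\ssigma}(z)=P_{\rrho,\ssigma}(z)$ meets $a_{\rho,\sigma}(T)\Lambda_{\Theta}$ nontrivially for unboundedly many $T$. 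Fourth, choose $z$ so that $P_{\rrho,\ssigma}(z)\subseteq P_{\rrho,\ssigma}(b)$, i.e. $z\le b$, and conclude via Observation~\ref{logarithmic criterion}(1) that $\Theta$ fails to be $(\rrho,\ssigma)$-badly approximable with constant $b$ --- the contrapositive we wanted. Finally, solve the resulting system of inequalities on $z,c$: taking $c=c_d$ as fixed by the lemma, the constraint $c^{-(d-1)} z^{(2-r_+)/2}\le (b')^{r_+/2}$ with $z=b$ gives $(b')^{r_+/2}\ge c^{-(d-1)} b^{(2-r_+)/2}$, i.e. $b'\ge c^{-2(d-1)/r_+} b^{(2-r_+)/r_+}=c^{-2(d-1)/r_+} b^{2/r_+-1}$; since $c^{2(d-1)/r_+}=d^{1/r_+}\le d$ (as $r_+\ge 1/d\ge$ gives $1/r_+\le d$, and actually $r_+\le 1$ so $1/r_+\ge 1$ --- one must check $d^{1/r_+}\le d^{d}$, so the stated constant $b'=b^{2/r_+-1}/d$ may need the cruder bound; I would verify $d^{1/r_+}\le d$ fails in general and instead absorb the factor, or note $c_d^{2(d-1)}=d$ so $c_d^{2(d-1)/r_+}=d^{1/r_+}$ and bound $d^{1/r_+}\le d^{\text{something}}$), which forces the clean choice of constant in the statement.

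\textbf{Main obstacle.} The only real subtlety is the exponent arithmetic: matching the pseudo-compound scaling $(2-\rho_i)/2$ against the target scaling $\rho_i/2$ while the lattice has been acted on by the \emph{transposed, time-reversed} flow $a_{\rho,\sigma}(\tfrac1T)$ rather than $a_{\rho,\sigma}(T)$, and tracking how the $c^{d-1}$ from the pseudo-compound of a scaled box combines with the constant $c=c_d=d^{1/(2(d-1))}$ from German--Evdokimov. Getting the worst-coordinate analysis right --- that the binding weight is $r_+=\max_{i,j}\{\rho_i,\sigma_j\}$ and not $r_-$ --- is the crux; since $c_d^{2(d-1)}=d$ exactly, the bound $c_d^{2(d-1)/r_+}=d^{1/r_+}$ and I expect the final constant in the statement to come out as $b^{2/r_+-1}d^{-1/r_+}$, which is at least $b^{2/r_+-1}/d$ precisely when $d^{1/r_+}\le d$, i.e. $r_+\ge 1$; since weights can be small this needs the weaker-looking but safe constant $b^{2/r_+-1}/d$ as stated, and I would present the chain of inequalities so that $d^{-1}$ is visibly an upper bound for $d^{-1/r_+}$ only after checking $1/r_+\le 1$... which is false, so in fact $d^{-1/r_+}\le d^{-1}$ \emph{does} hold since $1/r_+\ge 1$. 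That resolves it: $r_+\le\sum\rho_i=1$ forces $1/r_+\ge 1$, hence $d^{1/r_+}\ge d$ and $d^{-1/r_+}\le d^{-1}$, so the honest bound $b^{2/r_+-1}d^{-1/r_+}$ implies the stated $b^{2/r_+-1}/d$. I would write the proof to make this last monotonicity step explicit.
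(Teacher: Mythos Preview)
Your overall strategy is the same as the paper's: both proofs rest on Observation~\ref{logarithmic criterion}, Observation~\ref{obs_dual}, and Lemma~\ref{lem21}\ref{kostya}. The paper runs the argument \emph{directly} (assume $\Theta$ bad with constant $b$, apply the contrapositive of Lemma~\ref{lem21}\ref{kostya} to get $a_{\rho,\sigma}(1/T)\Lambda_{-\Theta}^{\top}\cap \tfrac{1}{c^{d-1}}P_{\rrho,\ssigma}(b)^{*}=\{\mathbf 0\}$, then inscribe $P_{\rrho,\ssigma}(w)$ inside $\tfrac{1}{c^{d-1}}P_{\rrho,\ssigma}(b)^{*}$), whereas you phrase it as a contrapositive; these are logically equivalent and use the same ingredients.

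There is, however, a genuine gap in your containment bookkeeping that propagates through the rest of the sketch. In your first key step you demand $\tfrac{1}{c^{d-1}}P_{\rrho,\ssigma}(z)^{*}\subseteq P_{\rrho,\ssigma}(b')$, but in your second step you want to pass from ``the dual lattice meets $P_{\rrho,\ssigma}(b')$'' to ``the dual lattice meets $\tfrac{1}{c^{d-1}}P_{\rrho,\ssigma}(z)^{*}$'', which requires the \emph{reverse} containment $P_{\rrho,\ssigma}(b')\subseteq \tfrac{1}{c^{d-1}}P_{\rrho,\ssigma}(z)^{*}$. With the correct direction (and $z=b$) the coordinatewise constraint is $(b')^{\rho_i/2}\le c^{-(d-1)}b^{(2-\rho_i)/2}$; your heuristic ``both exponents are monotone in $\rho_i$, hence the binding weight is $r_+$'' is not a valid argument, since one side decreases and the other increases in $\rho_i$ and you have to check which effect dominates. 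This error then feeds into the last paragraph: even accepting an ``honest bound'' of $d^{-1/r_+}b^{2/r_+-1}$, your monotonicity step goes the wrong way. Since $r_+\le 1$ gives $d^{-1/r_+}\le d^{-1}$, the constant $d^{-1/r_+}b^{2/r_+-1}$ is \emph{smaller} than $d^{-1}b^{2/r_+-1}$, and ``bad with constant $w_1$'' implies ``bad with constant $w_2$'' only when $w_2\le w_1$; so you cannot upgrade to the larger stated constant this way. The fix is to run the direct argument as the paper does (or, equivalently, to rewrite your contrapositive with the containment reversed) and redo the worst-coordinate computation carefully.
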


We will prove Proposition \ref{homogeneous bad transference} later in this section. Now, we are ready to state a stronger version of Corollary \ref{general ba criterion} for approximations with weights. 

\begin{corollary}\label{bad_weights}

There exist such functions $\kappa(b) = \kappa_{\rrho, \ssigma}(b)$ and $K(b) = K_{\rrho, \ssigma}(b)$ that the following holds:
    \begin{itemize}
        \item If a matrix $\Theta \in {\bf M}_{n,m}$ is  $(\rrho, \ssigma)$-badly approximable with constant $b$, then any pair $(\Theta, \pmb{\eta})$ is $K(b) f_1$-Dirichlet:

        $$
         {\widehat{\bf D}^{\Theta} [K(b) f_{1}; \rrho, \ssigma]} = \mathbb{R}^n.
        $$

        \item If a matrix $\Theta \in {\bf M}_{n,m}$ is not $(\rrho, \ssigma)$-badly approximable with constant $b$, then for any $\kappa < \kappa(b)$  almost any pair $(\Theta, \pmb{\eta})$ is not $\kappa f_1$-Dirichlet:

        $$
        m \left({\widehat{\bf D}^{\Theta} [\kappa f_{1}; \rrho, \ssigma]} \right) = 0.
        $$

        \end{itemize}

        More specifically, let $r_- = \min\limits_{i,j} \{ \rho_i, \sigma_j \}$ and $r_+ = \max\limits_{i,j} \{ \rho_i, \sigma_j \}$; then one can take

        $$
        K(b) = \frac{d^3(d!)^2}{b^{\frac{2}{r_+} - 1}} \,\,\,\,\,\,\,\,\,\,\,\, \text{and} \,\,\,\,\,\,\,\,\,\,\,\, \kappa(b) = 2^{- \frac{2}{r_-}} d^{- \frac{2}{r_-} - \frac{r_+}{2 - r_+}} b^{ - \frac{r_+}{2 - r_+}}.
        $$

\end{corollary}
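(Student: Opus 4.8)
The plan is to deduce this corollary from Theorem \ref{Jarnik_general} and Theorem \ref{weighted_impr} (or equivalently their weighted incarnations Proposition \ref{lem24} and Proposition \ref{DI with weights}), using Proposition \ref{homogeneous bad transference} to pass from Diophantine properties of $\Theta$ to those of $\Theta^{\top}$, and then carefully tracking the numerical dependence of the constants $C$ and $\varepsilon$ on the weights and on $b$. Throughout one uses the fact, noted in Section \ref{Dirichlet}, that $g_{\alf, \betf} = f_1$ in the weighted case, so that the hypotheses of the two theorems are about $(f, \ssigma, \rrho)$-approximability of $\Theta^{\top}$, and the relation \eqref{fgrelation} links $f$ and $g$.

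For the first bullet, suppose $\Theta$ is $(\rrho, \ssigma)$-badly approximable with constant $b$. By Proposition \ref{homogeneous bad transference}, $\Theta^{\top}$ is $(\ssigma, \rrho)$-badly approximable with constant $b' := b^{\frac{2}{r_+}-1}/d$; that is, $\Theta^{\top}$ is not $(b' f_1; \ssigma, \rrho)$-approximable. Choosing $f = b' f_1$ (so that the associated $g$ from \eqref{fgrelation} is $g = \frac{1}{b'} f_1$, up to the usual bookkeeping with the inversion conventions on $\alpha_i,\beta_j$), I would invoke Proposition \ref{lem24}: for $C$ large enough $\widehat{\bf D}^{\Theta}[C g \circ \frac1C; \rrho, \ssigma] = \mathbb{R}^n$, and $C g \circ \frac1C$ is again a scalar multiple of $f_1$. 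Feeding in the explicit constant $C = C_d = d \cdot d!$ from the Mahler-type Lemma \ref{lem21}\ref{mahler} (which is what drives the proof of Proposition \ref{lem24}) and composing with the factor $1/b'$, one obtains that $(\Theta, \pmb\eta)$ is $K(b) f_1$-Dirichlet with $K(b)$ of the form (const depending on $d$) $\cdot \frac{1}{b'}$; chasing the constants, the $d\cdot d!$ enters squared (once from forming $\frac{C}{L}\mathcal P$ and once from the rescaling absorbing $C g \circ \frac1C$ back to $f_1$), and together with the extra factor $d$ from $b' = b^{\frac{2}{r_+}-1}/d$ this yields exactly $K(b) = d^3 (d!)^2 / b^{\frac{2}{r_+}-1}$.

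For the second bullet, suppose $\Theta$ is \emph{not} $(\rrho, \ssigma)$-badly approximable with constant $b$, i.e.\ $\Theta$ \emph{is} $(b f_1; \rrho, \ssigma)$-approximable. One first transfers this to $\Theta^{\top}$: using Lemma \ref{lem21}\ref{kostya} (the German--Evdokimov strengthening of Mahler) in the same way as in Proposition \ref{homogeneous bad transference} but in the opposite direction, $(b f_1)$-approximability of $\Theta$ forces $\Theta^{\top}$ to be $(b'' f_1; \ssigma, \rrho)$-approximable for an explicit $b''$ depending on $b$, $d$ and $r_+$ — this is where the exponent $\frac{r_+}{2-r_+}$ appears, since in logarithmic/parallelepiped terms one has $\left(\frac1c P_{\rrho,\sigma}(z)\right)^* = \frac{1}{c^{d-1}} P_{\rrho,\sigma}(z)^*$ and the pseudo-compound scaling turns a multiplicative gain $z$ into $z^{(2-\rho_i)/\rho_i}$-type distortions, the worst of which is governed by $r_+$. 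With $\Theta^{\top}$ now $(f; \ssigma, \rrho)$-approximable for $f = b'' f_1$, Proposition \ref{DI with weights} applies: for $\varepsilon$ small enough, $\widehat{\bf D}^{\Theta}[\varepsilon \cdot g \circ \frac1\varepsilon; \rrho, \ssigma]$ is Lebesgue-null, and the quantitative bound there, $\varepsilon < (2d)^{-1/r_-}$, is what produces the $2^{-2/r_-} d^{-2/r_-}$ part of $\kappa(b)$; combined with the scalar $b''$ (contributing the $d^{-r_+/(2-r_+)} b^{-r_+/(2-r_+)}$ part) one gets the stated $\kappa(b)$. Since $\varepsilon \cdot g \circ \frac1\varepsilon$ is a scalar multiple of $f_1$, and monotonicity of the Dirichlet property in the scaling constant means that being $\kappa f_1$-Dirichlet for $\kappa < \kappa(b)$ is no weaker than for $\kappa(b)$ itself, the nullity passes to all smaller $\kappa$.

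The main obstacle I expect is purely bookkeeping: keeping the three conventions straight simultaneously — the inversion $\alpha_i(T) = 1/\alpha_i(1/T)$ and the $f \leftrightarrow g$ relation \eqref{fgrelation}, the factor-of-$2$ exponents in the definition of $P_{\rrho,\sigma}(b)$ (the $b^{\rho_i/2}$ rather than $b^{\rho_i}$), and the $c^{d-1}$ versus $c$ discrepancy between a parallelepiped and its pseudo-compound — so that the powers of $d$, $d!$, $2$, $r_-$ and $r_+$ land exactly as claimed rather than off by a constant or an exponent. The conceptual content is entirely contained in Lemma \ref{lem21}, Theorem \ref{Jarnik_general} and Theorem \ref{weighted_impr}; the work is in the careful propagation of explicit constants, which is why I would organize the proof as: (i) restate the transference of bad/good approximability with explicit constants (Proposition \ref{homogeneous bad transference} and its ``good'' analogue), (ii) plug into Proposition \ref{lem24} and Proposition \ref{DI with weights}, (iii) collect exponents.
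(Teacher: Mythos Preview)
Your proposal is correct and follows essentially the same route as the paper: transference via Proposition \ref{homogeneous bad transference} (or its contrapositive) combined with Proposition \ref{lem24} for the first bullet and Proposition \ref{DI with weights} for the second, followed by tracking the explicit constants. The paper packages the first-bullet computation slightly differently, working directly with the parallelepipeds $P_{\rrho,\ssigma}(\cdot)$ and Lemma \ref{lem21}\ref{mahler} rather than citing Proposition \ref{lem24} as a black box, but the constants come out the same.
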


A statement of a similar form can be shown for very well approximability. Together with Proposition \ref{homogeneous bad transference}, Proposition \ref{DI with weights} and Proposition \ref{lem24} imply the following criterion :

\begin{corollary}\label{VWA_weights}

\begin{itemize}

        \item If a matrix $\Theta \in {\bf M}_{n,m}$ is $(\rrho, \ssigma)$-very well approximable, then there exists $\varepsilon > 0$ such that almost no pair $(\Theta, \pmb{\eta})$ is $[f_{1 - \varepsilon}; \rrho, \ssigma]$-Dirichlet:

        $$
        m \left( {\widehat{\bf D}^{\Theta} [f_{1 - \varepsilon}; \rrho, \ssigma]} \right) = 0.
        $$

        \item If $\Theta$ is not $(\rrho, \ssigma)$-very well approximable, then for any $\varepsilon > 0$ and any $\pmb{\eta} \in \mathbb{R}^n$ the pair $(\Theta, \pmb{\eta})$ is $[f_{1 - \varepsilon}; \rrho, \ssigma]$-Dirichlet:

        $$
        {\widehat{\bf D}^{\Theta} [f_{1 - \varepsilon}; \rrho, \ssigma]}  = \mathbb{R}^n.
        $$
    \end{itemize}
\end{corollary}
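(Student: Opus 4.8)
The plan is to deduce Corollary \ref{VWA_weights} by unwinding the definition of $(\rrho,\ssigma)$-very well approximability into the asymptotic approximation function $f$, applying the transposition (Proposition \ref{homogeneous bad transference}) only when it is convenient, and then feeding the resulting homogeneous approximation data into the twisted statements Proposition \ref{lem24} and Proposition \ref{DI with weights}. The key observation linking the two pieces is the relation \eqref{fgrelation}: if $f = f_{1-\varepsilon}$, i.e. $f(T) = T^{-(1-\varepsilon)}$, then $f^{-1}(s) = s^{-1/(1-\varepsilon)}$, so $g(T) = 1/f^{-1}(1/T) = T^{-1/(1-\varepsilon)} = f_{1/(1-\varepsilon)}(T)$; since $1/(1-\varepsilon) > 1$, this $g$ is of the form $f_{1+\delta}$ for some $\delta = \delta(\varepsilon) > 0$, and conversely. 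Thus "$(\rrho,\ssigma)$-Dirichlet with respect to $f_{1-\varepsilon}$" on the $\pmb\eta$-side corresponds, via \eqref{fgrelation}, to asymptotic approximation of the transposed matrix with respect to an approximating function $f_{1+\delta}$ with $\delta$ comparable to $\varepsilon$ — which is exactly the very-well-approximability condition for $\Theta^{\top}$.

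For the first bullet, suppose $\Theta$ is $(\rrho,\ssigma)$-very well approximable. By Proposition \ref{homogeneous bad transference} (more precisely, by its consequence Corollary \ref{transpbad}, or just directly by Lemma \ref{lem21}\ref{kostya}), $\Theta^{\top}$ is then $(\ssigma,\rrho)$-very well approximable as well; so $\Theta^{\top} \in {\bf W}_{m,n}[f_{1+\delta_0};\ssigma,\rrho]$ for some $\delta_0 > 0$, hence also $\Theta^{\top} \in {\bf W}_{m,n}[f_{1+\delta};\ssigma,\rrho]$ for all $0 < \delta \le \delta_0$. Pick $\varepsilon > 0$ small enough that $f := f_{1-\varepsilon}$ has $g := f_{1/(1-\varepsilon)}$ related to it by \eqref{fgrelation} with $1/(1-\varepsilon) \le 1+\delta_0$, so that $\Theta^{\top} \in {\bf W}_{m,n}[f;\ssigma,\rrho]$. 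Now apply Proposition \ref{DI with weights}: since $\Theta^{\top}$ is $(f,\ssigma,\rrho)$-approximable and $f,g$ are related by \eqref{fgrelation}, for $\varepsilon'$ small the set $\widehat{\bf D}_{n,m}^{\Theta}[\varepsilon' \cdot g\circ\frac{1}{\varepsilon'};\rrho,\ssigma]$ is Lebesgue null. The final bookkeeping step is to check that $\varepsilon' \cdot g\circ\frac{1}{\varepsilon'}(T) = \varepsilon' \cdot g(T/\varepsilon') = \varepsilon' \cdot (\varepsilon')^{1/(1-\varepsilon)} T^{-1/(1-\varepsilon)}$ is a constant multiple of $f_{1/(1-\varepsilon)}$, hence pointwise dominates $f_{1-\varepsilon''}$ for a suitable $\varepsilon'' > 0$ once $T$ is large; since being $[f_{1-\varepsilon''};\rrho,\ssigma]$-Dirichlet is a monotone condition in the approximating function, $\widehat{\bf D}^{\Theta}[f_{1-\varepsilon''};\rrho,\ssigma] \subseteq \widehat{\bf D}^{\Theta}[\varepsilon' g\circ\frac{1}{\varepsilon'};\rrho,\ssigma]$ up to a null set (indeed up to nothing, if one is careful with "for all $T$ large"), giving measure zero.

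For the second bullet, suppose $\Theta$ is not $(\rrho,\ssigma)$-very well approximable. Then for every $\delta > 0$, $\Theta \notin {\bf W}_{n,m}[f_{1+\delta};\rrho,\ssigma]$, and by the transposition symmetry of very-well-approximability (again Lemma \ref{lem21}\ref{kostya}, or the standard Khintchine transference), $\Theta^{\top} \notin {\bf W}_{m,n}[f_{1+\delta};\ssigma,\rrho]$ for all $\delta > 0$. Fix an arbitrary $\varepsilon > 0$ and set $f := f_{1-\varepsilon}$, $g := f_{1/(1-\varepsilon)}$; since $1/(1-\varepsilon) = 1 + \delta$ with $\delta > 0$, we have $\Theta^{\top} \notin {\bf W}_{m,n}[f;\ssigma,\rrho]$ — wait, here I must be careful about which function plays which role, so let me instead argue on the correct side: the condition for $(\Theta,\pmb\eta)$ to be $[f_{1-\varepsilon};\rrho,\ssigma]$-Dirichlet is governed, via \eqref{fgrelation}, by whether $\Theta^{\top}$ is $(\tilde f;\ssigma,\rrho)$-approximable with $\tilde f$ the function for which $f_{1-\varepsilon}(T) = 1/\tilde f^{-1}(1/T)$, i.e. $\tilde f^{-1}(s) = s^{-1/(1-\varepsilon)}$, so $\tilde f(s) = s^{-(1-\varepsilon)} = f_{1-\varepsilon}(s)$ — in this power case $f$ and $g=\tilde f$ coincide, which is consistent. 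Since $1-\varepsilon < 1$, $\Theta^{\top}$ is \emph{automatically} not $(f_{1-\varepsilon};\ssigma,\rrho)$-approximable (as $f_{1-\varepsilon}$ decays slower than $f_1$ and even badly approximable matrices fail the $f_1$ bound only up to a constant, while genuinely $\Theta^{\top}\notin{\bf W}[f_{1-\varepsilon}]$ would require very-well-approximability of the \emph{transpose direction} — this needs the "not VWA" hypothesis). Then Proposition \ref{lem24} applies: with $f,g$ related by \eqref{fgrelation} and $\Theta^{\top}$ not $(f,\ssigma,\rrho)$-approximable, for $C$ large every pair $(\Theta,\pmb\eta)$ is $(C\cdot g\circ\frac1C;\rrho,\ssigma)$-Dirichlet, i.e. $\widehat{\bf D}_{n,m}^{\Theta}[C g\circ\frac1C;\rrho,\ssigma] = \mathbb{R}^n$. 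Since $C g\circ\frac1C$ is a constant multiple of $f_{1/(1-\varepsilon)} = f_{1-\varepsilon'}$ for some $0 < \varepsilon' < \varepsilon$... — here the monotonicity goes the wrong way for a fixed $\varepsilon$, so the honest statement is that one proves it for $f_{1-\varepsilon'}$ with $\varepsilon'$ slightly smaller, and since $\varepsilon>0$ was arbitrary this yields the claim for all $\varepsilon>0$; alternatively absorb the constant $C$ into the large-$T$ regime exactly as in the proof of Proposition \ref{lem24}.

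\medskip

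The main obstacle I anticipate is purely bookkeeping: matching the power $1-\varepsilon$ on the inhomogeneous side with the power $1+\delta$ on the homogeneous/transposed side through \eqref{fgrelation}, and then correctly handling the multiplicative constants produced by Proposition \ref{lem24} and Proposition \ref{DI with weights} (the $C\cdot g\circ\frac1C$ and $\varepsilon'\cdot g\circ\frac1{\varepsilon'}$ rescalings). For power functions $f_a$, a constant multiple of $f_a$ lies between $f_{a-\epsilon}$ and $f_{a+\epsilon}$ only eventually and only after shrinking/enlarging $a$, so one must phrase both bullets as "there exists $\varepsilon>0$ / for all $\varepsilon>0$" in a way compatible with these constant losses — which works precisely because the two quantifiers are arranged as they are in the statement. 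No genuinely new idea beyond the three cited propositions and the homogeneous transference inequality is needed; the content is entirely in the reduction.
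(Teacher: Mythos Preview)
Your overall plan is correct and matches the paper's: transfer very-well-approximability between $\Theta$ and $\Theta^\top$ (via Lemma~\ref{lem21}\ref{kostya} / weighted Khintchine transference), then feed the homogeneous data into Proposition~\ref{lem24} or Proposition~\ref{DI with weights}. The execution, however, systematically swaps the roles of $f$ and $g$ relative to the paper's conventions, and this swap is not merely notational---it makes several of your intermediate claims false.

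In Propositions~\ref{lem24} and~\ref{DI with weights}, $f$ is the \emph{homogeneous approximation} function (for $\Theta^\top$) and $g$ is the \emph{inhomogeneous Dirichlet} function. Since the goal concerns $\widehat{\bf D}^\Theta[f_{1-\varepsilon};\rrho,\ssigma]$, you must set $g=f_{1-\varepsilon}$, whence \eqref{fgrelation} yields $f=f_{1/(1-\varepsilon)}=f_{1+\delta}$ with $\delta>0$. You instead set $f=f_{1-\varepsilon}$, $g=f_{1+\delta}$, and this produces three concrete errors. (i) In the first bullet, the hypothesis ``$\Theta^\top\in{\bf W}_{m,n}[f_{1-\varepsilon};\ssigma,\rrho]$'' holds for \emph{every} matrix by Dirichlet's theorem, so your argument never uses the VWA assumption. (ii) The null set you actually obtain is $\widehat{\bf D}^\Theta[c\cdot f_{1/(1-\varepsilon)}]$; your claim that $c\cdot f_{1/(1-\varepsilon)}$ eventually dominates $f_{1-\varepsilon''}$ is false, since $1/(1-\varepsilon)>1>1-\varepsilon''$ and the former decays faster. (iii) In the second bullet, ``$\Theta^\top$ is automatically not $(f_{1-\varepsilon};\ssigma,\rrho)$-approximable'' is again false (every matrix is), and the computation ``$\tilde f=f_{1-\varepsilon}$, so $f$ and $g$ coincide'' contains an algebra slip: from $\tilde f^{-1}(1/T)=T^{1-\varepsilon}$ one gets $\tilde f^{-1}(s)=s^{-(1-\varepsilon)}$, hence $\tilde f=f_{1/(1-\varepsilon)}$, coinciding with $g$ only when $\varepsilon=0$.

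With the correct assignment $g=f_{1-\varepsilon}$, $f=f_{1+\delta}$, your sketch works: VWA of $\Theta$ transfers to VWA of $\Theta^\top$, giving $\Theta^\top\in{\bf W}_{m,n}[f_{1+\delta_0};\ssigma,\rrho]$, and Proposition~\ref{DI with weights} yields $m(\widehat{\bf D}^\Theta[c\cdot f_{1-\varepsilon_0}])=0$; the constant $c$ is absorbed by shrinking $\varepsilon_0$ (here the monotonicity does go the right way). For the second bullet, not-VWA transfers to $\Theta^\top\notin{\bf W}_{m,n}[f_{1+\delta};\ssigma,\rrho]$ for all $\delta>0$, and Proposition~\ref{lem24} gives $\widehat{\bf D}^\Theta[C\cdot f_{1-\varepsilon_0}\circ\tfrac1C]=\mathbb{R}^n$, with the constant again absorbed since $\varepsilon>0$ is arbitrary. (Minor: Corollary~\ref{transpbad} concerns badly approximable matrices, not VWA; the transference of VWA you need comes from Lemma~\ref{lem21}\ref{kostya} directly, as you also indicate.)
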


We note that statements similar to Corollary \ref{bad_weights} and Corollary \ref{VWA_weights} in terms of Schmidt's games rather than Lebesgue measure in the non-weighted case were recently shown in \cite{keith}: see Theorem 2.1 and Theorem 2.3.

At the end of this section, we provide an argument proving both Proposition \ref{homogeneous bad transference} and Corollary \ref{bad_weights}. The tools and ideas of this proof are the same as the ones used in Proposition \ref{bad=bounded} and Theorem \ref{Jarnik_general}; however, it is more convenient to trace the particular constants using Observation \ref{logarithmic criterion} instead of Observation \ref{geom_sets}.

\begin{proof}
    By Observation \ref{logarithmic criterion}, $\Theta$ is $(\rrho, \ssigma)$-badly approximable with constant $b$ if and only if $a_{\rho, \sigma}(T) \Lambda_{\Theta} \cap P_{\rrho, \sigma}(b) = \{ {\bf 0} \}$ for any $T$ large enough.

    \vskip+0.3cm

    { \bf We start with the assumption that $\Theta$ is $(\rrho, \ssigma)$-badly approximable with constant $b$}, which implies 

    $$
    a_{\rho, \sigma}(t) \Lambda_{\Theta} \cap c \cdot \frac{1}{c} P_{\rrho, \sigma}(z) = \{ {\bf 0} \}
    $$ 

    and therefore by Lemma \ref{lem21} \ref{kostya} and Observation \ref{obs_dual}

    \begin{equation}\label{intermediate transference}
    a_{\rho, \sigma}\left(\frac{1}{T}\right) \Lambda_{-\Theta}^{\top} \cap \left( \frac{1}{c} P_{\rrho, \sigma}(b)\right)^* = a_{\rho, \sigma}\left(\frac{1}{T} \right) \Lambda_{-\Theta}^{\top} \cap \frac{1}{c^{d-1}} P_{\rrho, \sigma}(b)^* = \{ {\bf 0} \},
    \end{equation}

    where $c$ is as in Lemma \ref{lem21} \ref{kostya}. A direct calculation shows that 

    $$
    w = \frac{b^{\frac{2}{r_+}-1}}{c^{2(d-1)}}
    $$

    is the smallest number with the condition $P_{\rrho, \sigma}(w) \subseteq  \frac{1}{c^{d-1}} P_{\rrho, \sigma}(b)^*$, guaranteeing that 

    $$
    a_{\rho, \sigma}\left( \frac{1}{T} \right) \Lambda_{-\Theta}^{\top} \cap P_{\rrho, \sigma}(w) = \{ {\bf 0} \}
    $$

    and thus that $\Theta^{\top}$ is $(\ssigma, \rrho)$-badly approximable with constant $w = \frac{b^{\frac{2}{r_+}-1}}{c^{2(d-1)}} = \frac{b^{\frac{2}{r_+}-1}}{d}$. { \bf This concludes the proof of Proposition \ref{homogeneous bad transference}. }

    \vskip+0.3cm

    By Lemma \ref{lem21} \ref{mahler},  \eqref{intermediate transference} implies that for any $T$ large enough, for any $\pmb{\gamma} \in \mathbb{R}^d$

    $$
    a_{\rho, \sigma}(T) \Lambda_{\Theta} \cap \left( \frac{C \cdot c^{d-1}}{b}P_{\rrho, \sigma}(b) + \pmb{\gamma} \right) = \emptyset
    $$
    and in addition contains nonzero points if $\pmb{\gamma} = {\bf 0}$. In turn, this implies that for any $T$ large enough and for any $\pmb{\eta} \in \mathbb{R}^n$ the set $a_{\rho, \sigma}(T) \Lambda_{\Theta, \pmb{\eta}} \cap \frac{C \cdot c^{d-1}}{b}P_{\rrho, \sigma}(b)$ is nonempty, and contains nonzero points if $\pmb{\eta} = {\bf 0}$. It remains to find the minimal number $u$ for which $\frac{C \cdot c^{d-1}}{b}P_{\rrho, \sigma}(b) \subseteq P_{\rrho, \sigma}(u)$ and use Observation \ref{logarithmic criterion} to conclude that any pair $(\Theta, \pmb{\eta})$ is $(Kf_1, \rrho, \ssigma)$-Dirichlet with $K \geq K(b) : = u = \frac{dC^2}{b^{\frac{2}{r_+} - 1}}$, { \bf which concludes the proof of the first part of Corollary \ref{bad_weights}.}

    \vskip+0.3cm

    Finally, to show the second part of Corollary \ref{bad_weights} we assume that $\Theta$ is not  $(\rrho, \ssigma)$-badly approximable with constant $b$. By Proposition \ref{homogeneous bad transference}, it means that $\Theta^{\top}$ is not $(\ssigma, \rrho)$-Badly approximable with constant $k = (bd)^{\frac{r_+}{2-r_+}}$. By Proposition \ref{DI with weights}, for any $\varepsilon < (2d)^{-\frac{1}{r_-}}$ the set 

    $$
    \widehat{\bf D}_{n,m}^{\Theta} \left[ \varepsilon^2 k f_1; \rrho, \ssigma  \right]
    $$

has Lebesgue measure zero, which finishes the proof as $\kappa = \varepsilon^2 k \leq \kappa(b)$.

\end{proof}

\color{red}

}

\section{Proofs}\label{section proofs}

\subsection{Proof of Theorem \ref{Jarnik_general}}\label{sect2}
    
    The proof is similar to the proof of Lemma 2.4 from \cite{CGGMS20}. By assumption, the matrix $\Theta^{\top}$ is not $(\uf; \betf, \alf)$-approximable; by Observation \ref{geom_sets}, it means that 

    \begin{equation}\label{largeenough}
    a_{\alf, \betf} \left( U, f(U) \right) \Lambda_{-\Theta}^{\top} \cap \mathcal{B}_d = \{ {\bf 0} \}
    \end{equation}

for any $U$ large enough. Recall that $a_{\alf, \betf} \left( \frac{1}{U}, \frac{1}{f(U)} \right)  \Lambda_{\Theta}$ is dual to $ a_{\alf, \betf} \left( U, f(U) \right) \Lambda_{-\Theta}^{\top}$ (Observation \ref{obs_dual}) and the cube $\mathcal{B}_d$ is self-pseudo-compound (Observation \ref{obs_cube}), so by Lemma \ref{lem21} \ref{mahler} for any $\pmb{\gamma} \in \mathbb{R}^d$ there exists such a $U_0$ that for $U > U_0$ the set 

$$
    \left(C \mathcal{B}_d + \pmb{\gamma} \right) \cap a_{\alf, \betf} \left( \frac{1}{U}, \frac{1}{f(U)} \right) \Lambda_{\Theta}
    $$

    is nonempty, and contains nonzero points if $\pmb{\gamma} = 0$. Next, let $T = \frac{1}{f(U)}$ and thus $U = \frac{1}{g(T)}$; then, there exists such a $T_0$ that for $T > T_0$ the set 

    \begin{equation}\label{nonempty set}
        \left(C \mathcal{B}_d + \pmb{\gamma} \right) \cap a_{\alf, \betf} \left( g(T), T \right) \Lambda_{\Theta} = 
    \end{equation}

    is nonempty, and contains nonzero points if $\pmb{\gamma} = 0$. Now, let $\alf' = C \alf$ and $\betf' = C \betf$. Fix $\pmb{\eta} \in \mathbb{R}^n$ and consider $\pmb{\gamma} = C \cdot a_{\alf', \betf'} \left( g(T), T \right) \begin{pmatrix}
        \pmb{\eta} \\ {\bf 0}
    \end{pmatrix} $. A direct calculation shows that if the set \eqref{nonempty set} is nonempty, then the set

    \begin{equation}\label{nonempty set 2}
    \left(  a_{\alf', \betf'} \left( g(T), T \right) \Lambda_{\Theta, \pmb{\eta}} \right) \cap \mathcal{B}_d
    \end{equation}

    is also nonempty, which by Observation \ref{geom_sets} means for nonzero $\pmb{\eta}$ that $(\Theta, \pmb{\eta}) \in \widehat{\bf D}_{n,m}\left[\ug; C \alf, C \betf\right]$. In the case $\pmb{\eta} = {\bf 0}$ it remains to notice that the set \eqref{nonempty set 2} contains a nonzero point.

\subsection{Proof of Theorem \ref{weighted_impr} and Proposition \ref{DI with weights}}

The proof of Theorem \ref{weighted_impr} will be based on two lemmata.

Suppose the assumptions of Theorem \ref{weighted_impr} are satisfied: $\Theta^T \in {\bf W}_{m,n}[\uf; \betf, \alf]$. Let ${ \bf y}_{\nu}$ be a sequence of vectors in $\mathbb{Z}^n$ for which 
\begin{equation}\label{ynudef}
    || \Theta^T { \bf y}_\nu ||_{\betf} \leq f(|{ \bf y}_\nu|_{\alf});
\end{equation}

we will use the notation $Y_\nu = |{ \bf y}_\nu|_{\alpha}$.

By quasimuiltiplicativity, for any $\delta > 0$ there exists $\varepsilon = \varepsilon(\delta) > 0$ such that 
\begin{equation}\label{eps(delta)} 
\alpha_i(T) \alpha_i\left( \frac{\varepsilon}{T} \right) \leq \delta \,\,\,\,\,\,\,\,\,\,\,\, \text{and} \,\,\,\,\,\,\,\,\,\,\,\, \beta_j(\varepsilon T) \beta_j\left( \frac{1}{T} \right) \leq \delta  \,\,\,\,\,\,\,\,\,\,\,\, \text{for any} \,\, i ,j  \,\, \text{and any} \,\, T \in \mathbb{R}_+.
\end{equation}

Let $\tilde{g} = \tilde{g}_{\varepsilon(\delta)} = \varepsilon \cdot \ug \circ \frac{1}{\varepsilon}$.

\vskip+0.2cm

    We define 

    $$
    \Omega_{\nu} = \Omega_{\nu}\left((m+n)\delta\right): = \{ \pmb{\eta} \in [0,1)^n: \,\,\, ||\pmb{\eta} \cdot { \bf y}_{\nu}|| \leq (m + n) \delta \}.
    $$

\begin{lemma}\label{transferencepart}

Let $\delta > 0$ be a parameter, and $\y_{\nu}, \tilde{g}$ and $\Omega_{\nu}$ be chosen as above. Then,

    $$
    \widehat{\bf D}_{n,m}^{\Theta}[\tilde{g}; \alf, \betf] \cap [0, 1)^n \subseteq \bigcup\limits_{N=1}^{\infty} \bigcap\limits_{\nu = N}^{\infty} \Omega_{\nu}.
    $$
\end{lemma}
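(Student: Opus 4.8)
The plan is to unwind the Dirichlet condition geometrically and feed it into a transference step. Suppose $\pmb{\eta} \in \widehat{\bf D}_{n,m}^{\Theta}[\tilde g;\alf,\betf] \cap [0,1)^n$. By the explicit reformulation of the Dirichlet condition (or via Observation \ref{geom_sets}), for every large $T$ there is a nonzero $\q = \q(T) \in \mathbb{Z}^m$ with $|q_j| \le \beta_j(T)$ and $\|\Theta_i \q - \eta_i\| \le \alpha_i(\tilde g(T))$ for all $i,j$. The key idea is to pair such a $\q(T)$ with the approximating vector $\y_\nu$ coming from $\Theta^\top \in {\bf W}_{m,n}[\uf;\betf,\alf]$, exactly as in the classical Khintchine–Jarník transference: consider the integer $\langle \y_\nu, \Theta\q - \p\rangle$ for the appropriate integer vector $\p$ realizing $\|\Theta\q - \pmb\eta\|$, and rewrite it as $\langle \Theta^\top \y_\nu, \q\rangle - \langle \y_\nu, \pmb\eta\rangle + (\text{integer})$. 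This shows $\|\langle \y_\nu,\pmb\eta\rangle\|$ is controlled by $\|\langle \Theta^\top\y_\nu,\q\rangle\| + \langle \y_\nu, \Theta\q - \pmb\eta\rangle$ modulo $\mathbb{Z}$, and each of those two terms I will bound by $\sum_j \beta_j^{-1}(\cdots)\cdot |q_j|$-type and $\sum_i \alpha_i^{-1}(\cdots)\cdot|y_{\nu,i}|$-type sums respectively.

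The heart of the matter is the choice of $T$ as a function of $\nu$. Given $\y_\nu$ with $Y_\nu = |\y_\nu|_{\alf}$, I choose $T = T_\nu$ so that $\beta_j(T_\nu) \approx 1/\big(\text{something involving } \|(\Theta^\top)_j \y_\nu\|\big)$ — concretely, matching the scale at which the homogeneous approximation $\|\Theta^\top \y_\nu\|_{\betf} \le f(Y_\nu)$ becomes usable. Then $|q_j| \le \beta_j(T_\nu)$ combined with $\|(\Theta^\top)_j \y_\nu\| \le \beta_j(f(Y_\nu))$ gives $\sum_j \|(\Theta^\top)_j \y_\nu\|\,|q_j| \le \sum_j \beta_j(f(Y_\nu))\beta_j(T_\nu)$, and the relation \eqref{fgrelation} between $f$ and $g$ together with $\tilde g = \varepsilon\cdot g\circ\frac1\varepsilon$ is designed precisely so that $\beta_j(f(Y_\nu))\beta_j(T_\nu)$ is of the form $\beta_j(\varepsilon T)\beta_j(1/T)$ for a suitable $T$, hence $\le \delta$ by the second inequality in \eqref{eps(delta)}. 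Symmetrically, $\sum_i |y_{\nu,i}|\cdot\|\Theta_i\q - \eta_i\| \le \sum_i \alpha_i(Y_\nu)\alpha_i(\tilde g(T_\nu))$, which the first inequality in \eqref{eps(delta)} bounds by $\delta$ per coordinate. Summing the $m$ contributions of one type and the $n$ of the other yields $\|\langle \y_\nu, \pmb\eta\rangle\| \le (m+n)\delta$, i.e. $\pmb\eta \in \Omega_\nu$.

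Finally, this bound holds for all $\nu$ large enough — the "large enough" threshold $N$ depends on $\pmb\eta$ (and on how large $T$ must be for the Dirichlet property to kick in, and for \eqref{ynudef} to be the relevant regime), so $\pmb\eta \in \bigcap_{\nu=N}^\infty \Omega_\nu \subseteq \bigcup_{N=1}^\infty \bigcap_{\nu=N}^\infty \Omega_\nu$, which is the claimed inclusion. The main obstacle I anticipate is bookkeeping: making the substitution $T = T_\nu$ precise so that both the $\beta$-sum and the $\alpha$-sum land in the exact form $\beta_j(\varepsilon T)\beta_j(1/T)$ and $\alpha_i(T)\alpha_i(\varepsilon/T)$ appearing in \eqref{eps(delta)}, and checking that $\q(T_\nu) \ne \mathbf 0$ does not cause the integer identity to degenerate (one may need $\y_\nu$ large enough that the product $\langle \y_\nu, \pmb\eta\rangle$ is not already an integer for trivial reasons, or simply absorb that case). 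The use of quasimultiplicativity is confined to producing $\varepsilon(\delta)$ in \eqref{eps(delta)} and to the scale-matching, both of which are already granted by the hypotheses.
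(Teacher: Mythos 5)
Your proposal is correct and follows essentially the same route as the paper: the classical transference identity $\pmb{\eta}\cdot\y_\nu = \sum_j q_j (\Theta^\top)_j(\y_\nu) - \sum_i (\Theta_i(\q)-\eta_i)y_{\nu,i}$, the choice $T_\nu$ tuned to $Y_\nu$ via \eqref{fgrelation} (the paper takes $T_\nu = \varepsilon/f(Y_\nu)$, which is exactly the value your scale-matching constraints force), and the $\varepsilon(\delta)$ condition from \eqref{eps(delta)} to bound each of the $m+n$ terms by $\delta$. The worry about $\q=\mathbf{0}$ degenerating the identity is moot since it is a purely algebraic identity and $\q\ne\mathbf{0}$ anyway by the Dirichlet property.
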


\begin{proof}
    Suppose $\pmb{\eta} \in \widehat{\bf D}_{n,m}^{\Theta}[\tilde{g}; \alf, \betf] \cap [0, 1)^n$, and let $T_{\nu} = \frac{\varepsilon}{f(Y_\nu)}$; for $\nu$ large enough there exists ${\bf q}_{\nu}$ with $|{ \bf q}_{\nu}|_{\betf} \leq T_{\nu}$ and $ || \Theta  {\bf q}_{\nu} - \pmb{\eta} ||_{\alf} \leq \tilde{g}(T_{\nu})$.

    We will base our proof on a classical transference observation: one has

    \begin{equation}\label{transference}
     \pmb{\eta} \cdot { \bf y}_{\nu} = \sum\limits_{j=1}^m q_j \Theta_j^T({ \bf y}_{\nu}) - \sum\limits_{i=1}^n \left(  \Theta_i({\bf q}) - \eta_i \right) y_{\nu, i},
    \end{equation}

    and since

    $$
    || q_j \Theta_j^T({ \bf y}_{\nu})|| \leq |q_i|\cdot||\Theta_j^T({ \bf y}_{\nu})|| \leq \beta_j(T_{\nu}) \beta_j(f(Y_\nu)) = \beta_j \left(\frac{\varepsilon}{f(Y_\nu)} \right) \beta_j(f(Y_\nu))
    $$

    
    and 

    $$
    || \left(  \Theta_i({\bf q}) - \eta_i \right) y_{\nu, i}|| \leq | y_{\nu, i}|\cdot||\left(  \Theta_i({\bf q}) - \eta_i \right)|| \leq \alpha_i(Y_\nu) \alpha_i(\tilde{g}(T_\nu)) = \alpha_i(Y_\nu) \alpha_i \left( \frac{\varepsilon}{Y_{\nu}} \right),
    $$

    by the choice of $\varepsilon$ we get that 

    \begin{equation}\label{prime}
    ||\pmb{\eta} \cdot { \bf y}_{\nu}|| \leq \sum\limits_{j=1}^m || q_j \Theta_j^T({ \bf y}_{\nu})|| + \sum\limits_{i=1}^n || \left(  \Theta_i({\bf q}) - \eta_i \right) y_{\nu, i}|| \leq (m + n) \delta.
    \end{equation}

    The condition \eqref{prime} means that $\pmb{\eta} \in \Omega_{\nu}$ for $\nu$ large enough, that is, $\widehat{\bf D}_{n,m}^{\Theta}[\tilde{g}; \alf, \betf] \cap [0, 1)^n \subseteq \bigcup\limits_{N=1}^{\infty} \bigcap\limits_{\nu = N}^{\infty} \Omega_{\nu}.$
\end{proof}

\vskip+0.3cm

We want to show that for the correct choice of parameters the liminf set from Lemma \ref{transferencepart} is a nullset:

\begin{lemma}\label{measurepart}
    Let $0 < \delta < \frac{1}{2(n+m)}$, and $\y_{\nu}$ be a sequence of integer vectors satisfying \ref{ynudef} with an additional condition $\lim\limits_{\nu \rightarrow \infty} \frac{|\y_{\nu + 1}|}{|\y_{\nu}|} = \infty$. Then, 

    $$
    m \left( \bigcup\limits_{N=1}^{\infty} \bigcap\limits_{\nu = N}^{\infty} \Omega_{\nu} \right) = 0.
    $$
\end{lemma}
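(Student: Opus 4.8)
\textbf{Plan for the proof of Lemma \ref{measurepart}.}
The set $\Omega_\nu = \{\pmb\eta\in[0,1)^n : \|\pmb\eta\cdot\y_\nu\|\le(m+n)\delta\}$ is a union of finitely many parallel affine slabs (the region between consecutive hyperplanes $\pmb\eta\cdot\y_\nu\in\mathbb{Z}$), and since $0<\delta<\tfrac{1}{2(n+m)}$ we have $(m+n)\delta<\tfrac12$, so each slab has relative thickness $2(m+n)\delta$ in the $\y_\nu$-direction. This gives the uniform bound
$$
m(\Omega_\nu) = 2(m+n)\delta =: \theta < 1
$$
for every $\nu$. So the individual measures do not shrink, and a naive Borel--Cantelli estimate fails; the content of the lemma is that, under the lacunarity hypothesis $|\y_{\nu+1}|/|\y_\nu|\to\infty$, the events $\Omega_\nu$ become \emph{quasi-independent} in a strong enough sense that $m\big(\bigcap_{\nu\ge N}\Omega_\nu\big)=0$ for each $N$, whence the countable union is null.

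The approach I would take is to show $m\big(\bigcap_{\nu=N}^{M}\Omega_\nu\big)\to 0$ as $M\to\infty$ for each fixed $N$. The key step is a conditional-measure estimate: I claim there is an absolute constant $\theta'<1$ (one may take $\theta'$ slightly larger than $\theta=2(m+n)\delta$, still $<1$) and, for each $k$, an index $\nu_{k+1}$ much larger than $\nu_k$ such that
$$
m\Big(\Omega_{\nu_{k+1}} \,\Big|\, \bigcap_{i\le k}\Omega_{\nu_i}\Big) \le \theta'.
$$
Iterating along a subsequence $\nu_1<\nu_2<\cdots$ chosen sparsely enough then gives $m(\bigcap_i\Omega_{\nu_i})\le(\theta')^{k}\to 0$, and since $\bigcap_{\nu\ge N}\Omega_\nu\subseteq\bigcap_i\Omega_{\nu_i}$ this finishes the proof (the union over $N$ is a countable union of null sets). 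To prove the conditional estimate, note that $E_k:=\bigcap_{i\le k}\Omega_{\nu_i}$ is a bounded set of positive measure, so by the Lebesgue density theorem almost every point of $E_k$ has density $1$; cover a density point by a tiny cube $Q$ on which $m(E_k\cap Q)\ge(1-\epsilon)m(Q)$. Now intersect with $\Omega_{\nu_{k+1}}$: on the cube $Q$, the slab system defining $\Omega_{\nu_{k+1}}$ has spacing $1/|\y_{\nu_{k+1}}|$ in the $\y_{\nu_{k+1}}$-direction, which by lacunarity is far smaller than the side of $Q$ (choose $\nu_{k+1}$ large enough), so $\Omega_{\nu_{k+1}}$ equidistributes in $Q$ and $m(\Omega_{\nu_{k+1}}\cap Q)\le(\theta+\epsilon)m(Q)$. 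Combining, $m(E_k\cap\Omega_{\nu_{k+1}}\cap Q)\le(\theta+\epsilon)m(Q)$ and summing such cubes over an almost-cover of $E_k$ yields $m(E_{k+1})\le(\theta+2\epsilon)m(E_k)$, which is the desired estimate with $\theta'=\theta+2\epsilon<1$ for $\epsilon$ small.

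An alternative, perhaps cleaner route is to invoke a ready-made Borel--Cantelli lemma for \emph{lacunary} (or more generally quasi-independent) sequences of "resonant" sets — e.g. the Cassels/Gallagher-type zero-one law, or the divergence Borel--Cantelli lemma of Erd\H os--R\'enyi, or the fact (essentially Weyl equidistribution) that for a lacunary integer sequence the fractional parts $\{\pmb\eta\cdot\y_\nu\}$ are, for a.e.\ $\pmb\eta$, dense (indeed equidistributed) in $[0,1)^{?}$, so that $\|\pmb\eta\cdot\y_\nu\|>(m+n)\delta$ for infinitely many $\nu$ for a.e.\ $\pmb\eta$; this last statement is exactly the complement of $\bigcup_N\bigcap_{\nu\ge N}\Omega_\nu$ having full measure. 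I would phrase the final write-up using whichever equidistribution statement the paper is willing to cite; the hypothesis $\delta<\tfrac1{2(n+m)}$ is used precisely to make $\theta<1$, i.e.\ to keep the slabs from filling the torus.

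\textbf{Main obstacle.} The delicate point is turning the lacunarity hypothesis into a genuine quasi-independence / equidistribution statement in dimension $n>1$: the vectors $\y_\nu$ need not have any one distinguished large coordinate, and $|\y_\nu|$ is the $\alpha$-quasinorm rather than the Euclidean norm, so one must be a little careful that $|\y_{\nu+1}|/|\y_\nu|\to\infty$ still forces the hyperplane system of $\Omega_{\nu+1}$ to be fine on the scale of any fixed small cube. Once that geometric fact is nailed down, the density-point argument (or the cited Borel--Cantelli lemma) goes through routinely.
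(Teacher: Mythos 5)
Your proposal is correct in its core mechanism --- the uniform bound $m(\Omega_\nu)\approx 2(n+m)\delta<1$, lacunarity-driven quasi-independence, and a Borel--Cantelli-type conclusion --- but the execution differs from the paper's. The paper works with the complements $\Omega_\nu^c = E_{\y_\nu,\frac12}(\Delta)\cap[0,1)^n$ with $\Delta=\tfrac12-(n+m)\delta$: via a dyadic-cube decomposition (its three auxiliary observations on slab sets $E_{\y,a}(\Delta)$) it proves the pairwise quasi-independence estimate $m(\Omega_\nu^c\cap\Omega_\mu^c)\le(1+\lambda)\,m(\Omega_\nu^c)\,m(\Omega_\mu^c)$ for all sufficiently large $\nu\neq\mu$, notes that $\sum_\nu m(\Omega_\nu^c)=\infty$, and invokes Sprindzhuk's divergence Borel--Cantelli lemma to get $m\bigl(\limsup_\nu\Omega_\nu^c\bigr)\ge 1/(1+\lambda)$, hence $=1$ since $\lambda>0$ is arbitrary. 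Your primary route instead works with $\Omega_\nu$ directly and iterates a conditional bound $m\bigl(\Omega_{\nu_{k+1}}\mid\bigcap_{i\le k}\Omega_{\nu_i}\bigr)\le\theta'<1$ along a thinned subsequence, established by a Lebesgue-density plus Vitali-covering argument; this is sound provided, as you indicate, the covering cubes for $E_k=\bigcap_{i\le k}\Omega_{\nu_i}$ are fixed first and $\nu_{k+1}$ is chosen afterwards. It is in effect the from-scratch version of what the paper packages into its slab-measure observations plus Sprindzhuk's lemma: the paper needs only \emph{pairwise} quasi-independence but for all large pairs, while you need a stronger conditional estimate but only along a subsequence you get to choose. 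Your sketched alternative --- citing a ready-made divergence Borel--Cantelli lemma for quasi-independent events --- is essentially the paper's argument. One small correction to your ``main obstacle'': the lacunarity hypothesis in the lemma is stated in the sup norm $|\cdot|$, and the paper's slab-measure estimates are all phrased in sup norm as well, so the $\alpha$-quasinorm issue you flag does not actually arise; the slab spacing $\asymp 1/|\y_{\nu+1}|$ becomes fine relative to any fixed cube directly from $|\y_{\nu+1}|/|\y_\nu|\to\infty$.
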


\begin{proof}
    In this proof, we will work with sets of the form

    $$
    E_{ \y, a } (\Delta): = \{ \x \in \mathbb{R}^n: \,\,\, || \x \cdot \y + a || < \Delta \}, 
    $$

    where $\y \in \mathbb{R}^n, a \in \mathbb{R}$ are parameters, and $0 < \Delta < \frac{1}{2}$. The following three observations can be directly checked:

    \begin{obs}\label{O1.}
       $m \left( E_{\y, a} (\Delta) \right) \cap [0, 1)^n = 2 \Delta \left( 1 + \xi(|\y|) \right)$, where $\xi(|\y|)$ is infinitely small as $|\y| \rightarrow \infty$; 
    \end{obs}
        
        \begin{obs}\label{O2.}
         Let $B: = \prod\limits_{i=1}^n [b_i \cdot 2^{-k}; (b_i+1) \cdot 2^{-k})$ be a cube, and $\bfb = (b_1, \ldots, b_n)$. We note that under a transformation $\x  \mapsto 2^k \x + \bfb$ one has

        $$
        B \mapsto [0, 1)^n \,\,\,\,\,\,\,\,\,\,\,\, \text{and} \,\,\,\,\,\,\,\,\,\,\,\, E_{ \y, a } (\Delta) \mapsto E_{ \frac{\y}{2^k}, a' } (\Delta)
        $$

        where $a' = a - \frac{\bfb \cdot \y}{2^k}$. Therefore, by {Observation } \ref{O1.}, 

        $$
        m \left( E_{\y, a} (\Delta) \right) \cap B = m(B) \cdot 2 \Delta \left( 1 + \xi \left(\frac{|\y|}{2^k} \right) \right).
        $$
    \end{obs}
   
        \begin{obs}\label{O3.}
         Let $Q_k$ be the union of all cubes of the form 
        $$\prod\limits_{i=1}^n [b_i \cdot 2^{-k}; (b_i+1) \cdot 2^{-k}), \,\,\, b_i \in \mathbb{Z},
        $$ intersecting $E_{\y, a} (\Delta)$. Then,

        $$
        \frac{m \left( Q_k \right)}{m \left( E_{ \y, a } (\Delta) \right)} \leq 1 + \zeta_{\Delta}\left(\frac{2^k}{|\y|}\right), 
        $$

        where $\zeta_{\Delta}\left( z \right)$ is an infinitely small function as $z \rightarrow \infty$.
    \end{obs}

    Let us note that

    $$
    \Omega_{\nu}^c: = [0,1)^n \setminus \Omega_{\nu} = \{ \pmb{\eta} \in [0,1)^n: \,\,\, |\pmb{\eta} \cdot { \bf y}_{\nu} - p + \frac{1}{2}| < \frac{1}{2} -  (m + n) \delta \,\,\, \text{for some} \,\, p \in \mathbb{Z} \} = E_{ \y_{\nu}, \frac{1}{2} } (\Delta) \cap [0, 1)^n,
    $$

  where $\Delta = \frac{1}{2} -  (m + n) \delta > 0$.
    
   Fix $\lambda > 0$. Our goal is to show the following

    \begin{indentedclaim}
    For $\nu \neq \mu$ large enough, 
    $$
    m \left( \Omega_{\nu}^c \cap \Omega_{\mu}^c \right) \leq (1 + \lambda)  m \left( \Omega_{\nu}^c \right) m \left(\Omega_{\mu}^c \right) .
    $$

    \vskip+0.3cm

    Indeed: let $Q_k$ be as in Observation \ref{O3.} for $\Omega_{\nu}^c = E_{ \y_{\nu}, \frac{1}{2} } (\Delta) \cap [0, 1)^n$. Then,

    $$
    m \left( \Omega_{\nu}^c \cap \Omega_{\mu}^c \right) \leq m \left( Q_k \cap \Omega_{\mu}^c \right) \stackunder[1pt]{{}={}}{\scriptstyle \text{Observation } \ref{O2.}} m(Q_k)  \cdot 2 \Delta \left( 1 + \xi \left(\frac{|\y_\mu|}{2^k}  \right)\right) \stackunder[1pt]{{}={}}{\scriptstyle  \text{Observation } \ref{O1.}}
    $$
    $$
    m(Q_k) m(\Omega_{\mu}^c) \cdot \left( 1 + \xi \left({|\y_\mu|}  \right) \right)^{-1} \left( 1 + \xi \left(\frac{|\y_\mu|}{2^k}  \right)\right) \stackunder[1pt]{{}\leq{}}{\scriptstyle  \text{Observation } \ref{O3.}} 
    $$
    $$
    m(\Omega_{\nu}^c) m(\Omega_{\mu}^c) \cdot \left( 1 + \zeta_{\Delta}\left(\frac{2^k}{|\y_{\nu}|}\right) \right) \left( 1 + \xi \left({|\y_\mu|}  \right) \right)^{-1} \left( 1 + \xi \left(\frac{|\y_\mu|}{2^k}  \right)\right);
    $$

    if $\frac{|\y_{\mu}|}{|\y_{\nu}|}$ is large enough, we can choose $k$ in such a way that 

    $$
    \left( 1 + \zeta_{\Delta}\left(\frac{2^k}{|\y_{\nu}|}\right) \right) \left( 1 + \xi \left({|\y_\mu|}  \right) \right)^{-1} \left( 1 + \xi \left(\frac{|\y_\mu|}{2^k}  \right)\right) \leq (1 + \lambda).
    $$
\end{indentedclaim}

\vskip+0.3cm

 By Observation \ref{O1.} again, we see that

    $$
    \sum\limits_{\nu = 1}^{\infty} m \left( \Omega_{\nu}^c \right) = \infty,
    $$

    and we can apply the classical metrical lemma due to Sprindzhuk (Lemma 5, §3, Ch.1 in \cite{Spr}):

    $$
        m \left(  \bigcap\limits_{N=1}^{\infty} \bigcup\limits_{\nu=N}^{\infty} {\Omega}_{\nu}^c \right) \geq \limsup\limits_{N \rightarrow \infty} \frac{\left( \sum\limits_{\nu=1}^{N} m({\Omega}_{\nu}^c) \right)^2}{\sum\limits_{\nu,\mu=1}^{N} m({\Omega}_{\nu}^c \cap {\Omega}_{\mu}^c)} \geq \frac{1}{1 + \lambda}.
        $$

    Note that $\lambda$ can be taken arbitrarily small; thus, 
    $$m \left(  \bigcap\limits_{N=1}^{\infty} \bigcup\limits_{\nu=N}^{\infty} {\Omega}_{\nu}^c \right) = 1$$
    and therefore 
    $$m \left(  \bigcup\limits_{N=1}^{\infty} \bigcap\limits_{\nu=N}^{\infty} {\Omega}_{\nu} \right) = 0.$$
\end{proof}

\vskip+0.3cm

{ \bf Proof of Theorem \ref{weighted_impr}).} Let $0 < \delta < \frac{1}{2(n+m)}$, and $\tilde{g} = \tilde{g}_{\varepsilon(\delta)}$. Let $\{ \y_{\nu} \}$ be a sequence satisfying the conditions of Lemma \ref{measurepart}. Now Theorem \ref{weighted_impr} directly follows from Lemma \ref{transferencepart} and Lemma \ref{measurepart}.

{ \bf Proof of Theorem \ref{DI with weights}).} The existence of some $\varepsilon > 0$ satisfying Proposition \ref{DI with weights} directly follows from Theorem \ref{weighted_impr} applied to the case $\alpha_i(T) = T^{\rho_i}, \, \beta_j(T) = T^{\sigma_j}$. The particular bound on $\varepsilon$ follows if we notice that for our special case one can take $\varepsilon(\delta) = \delta^{\frac{1}{r_-}}$ in \eqref{eps(delta)}, and that it is enough to require the condition $\delta < \frac{1}{2d}$ in order for Lemma \ref{measurepart} to hold.

\subsection{Proof of Theorem \ref{main for pairs}}\label{sect3}

For convenience, we will again use the notation

$$\alpha(T): = \prod\limits_{i=1}^{n} \alpha_i(T) \,\,\,\,\,\,\, \text{and} \,\,\,\,\,\,\,\,\beta(T): = \prod\limits_{j=1}^{m} \beta_j(T).$$ 

Let us first assume that $g$ is a continuous and strictly decreasing function, and prove that the theorem holds with this additional assumption. We need to prove a technical lemma. Note that for the case of approximation with weights, and thus for Proposition \ref{kwad_weighted}, the statement of this lemma becomes a trivial observation.

\begin{lemma}\label{series equivalent}
    Suppose $f$ and $g$ are connected by \eqref{fgrelation}, and all the functions $\alpha_i, \beta_j$ are quasimultiplicative. Then,
    
    \begin{equation}\label{equivalence}
    \sum\limits_{k=1}^{\infty} k^{-1} \beta(f(k)) \cdot \alpha(k) < \infty \,\,\,\,\,\,\,\, \text{if and only if} \,\,\,\,\,\,\,\, \sum\limits_{l=1}^{\infty} \frac{1}{l \cdot \beta(l) \cdot \alpha(g(l))} < \infty.
    \end{equation}
\end{lemma}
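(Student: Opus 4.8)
The plan is to reduce the equivalence of the two series to a change of summation variable dictated by the relation \eqref{fgrelation}, using quasimultiplicativity to pass freely between sums and integrals and to absorb multiplicative constants. First I would rewrite \eqref{fgrelation} as $f^{-1}(1/T) = 1/g(T)$, i.e. $f(1/u) = 1/v \iff g(v) = 1/u$ for the appropriate correspondence of variables; concretely, setting $l = 1/f(k)$ (equivalently $k = f^{-1}(1/l) = 1/g(l)$) turns the "$k$-series" into a sum indexed by $l$. Since all the $\alpha_i,\beta_j$ are monotone and quasimultiplicative, each of $\alpha,\beta$ and their compositions with $f$ or $g$ is again monotone and quasimultiplicative (Observation \ref{quasiproperties}), so by the Cauchy condensation principle both series in \eqref{equivalence} are equiconvergent with the corresponding integrals $\int^\infty \frac{\beta(f(t))\alpha(t)}{t}\,dt$ and $\int^\infty \frac{dt}{t\,\beta(t)\,\alpha(g(t))}$.

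The key computational step is then the substitution $t = 1/g(s)$ (so $s = 1/f(t)$) in the first integral. Under this substitution $f(t) = 1/s$, hence $\beta(f(t)) = \beta(1/s) = 1/\beta(s)$ by \eqref{star_condition} applied to the product $\beta$; likewise $\alpha(t) = \alpha(1/g(s))$. It remains to compare $\alpha(1/g(s))$ with $1/\alpha(g(s))$: again by \eqref{star_condition}, $\alpha(1/g(s)) = 1/\alpha(g(s))$ exactly when each $\alpha_i$ satisfies \eqref{star_condition}, which it does. Finally the Jacobian $\frac{dt}{t}$ transforms, after writing everything in the logarithmic variable, into $\frac{ds}{s}$ up to a factor controlled by the quasimultiplicativity exponents $k_1,k_2$ of $g$ (i.e. $d(\log t) \asymp d(\log s)$ with implied constants depending only on the fixed exponents), so the two integrals agree up to multiplicative constants. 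Since convergence of an integral with a monotone quasimultiplicative integrand is unaffected by multiplicative constants, the two integrals — and hence the two series — converge or diverge together.

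I would organize the write-up as: (i) record that $\alpha$, $\beta$, $\alpha\circ g$, $\beta\circ f$ are quasimultiplicative and monotone; (ii) condense each series to its integral form; (iii) perform the substitution $t=1/g(s)$, using \eqref{star_condition} for $\alpha$ and $\beta$ to rewrite $\beta(f(t))$ and $\alpha(t)$; (iv) bound the change of the logarithmic measure $d(\log t)/d(\log s)$ above and below using the quasimultiplicative exponents of $g$; (v) conclude. The main obstacle I expect is step (iv): one must be careful that $t = 1/g(s)$ is a genuine monotone bijection of neighborhoods of infinity (which needs $g$ continuous and strictly decreasing with $g(s)\to 0$, exactly the standing assumption in this part of the proof) and that the comparison $\log t \asymp \log s$ holds with constants independent of the range — this is where \eqref{quasireformulation} for $g$ (equivalently, the two-sided power bounds $R^{k_1} g(T) \le g(RT) \le R^{k_2} g(T)$) is used, translating into $k_1 \log s \lesssim \log t \lesssim k_2 \log s$ up to additive constants. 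Everything else is bookkeeping with \eqref{star_condition} and monotone condensation.
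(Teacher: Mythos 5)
Your reduction to integrals and the algebraic identities $\beta(f(t))=\beta(1/s)=1/\beta(s)$ and $\alpha(t)=\alpha(1/g(s))=1/\alpha(g(s))$ are fine (these follow exactly as you say from \eqref{star_condition}). The gap is in step (iv), the Jacobian, and it is a real one. Under $t=1/g(s)$ the change of logarithmic measure is
$$
\frac{d(\log t)}{d(\log s)} = -\frac{s\,g'(s)}{g(s)},
$$
and you need this quantity to be bounded above and below by positive constants. Two things go wrong. First, the lemma does not assume $g$ (or $f$) to be quasimultiplicative; only the weight functions $\alpha_i,\beta_j$ are. Second, and more fundamentally, even if $g$ were quasimultiplicative, \eqref{quasireformulation} controls \emph{ratios} $g(RT)/g(T)$, i.e.\ it gives the pointwise comparison $\log t\asymp\log s$, but it says nothing about the pointwise derivative $g'$. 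One can easily arrange a continuous, strictly decreasing $g$ with $g(RT)/g(T)$ pinned between two powers of $R$ while $-sg'(s)/g(s)$ oscillates between $0$ and something huge (e.g.\ let $g$ be nearly flat on most of $[M^k,M^{k+1}]$ and drop at the end). In that case $d(\log t)$ is not comparable to $d(\log s)$ as a measure, and the substituted integral can over- or under-count badly depending on where the integrand is large. So the sentence ``translating into $k_1\log s\lesssim\log t\lesssim k_2\log s$'' does not give what you need: a pointwise two-sided bound on the \emph{values} does not yield a two-sided bound on the \emph{differential}.

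The paper avoids taking a derivative of $g$ or $f$ entirely, and this is the essential extra idea. After first regularizing $\alpha$ and $\beta$ by piecewise-linear interpolation at powers of $M$ (Step~1), so that one actually has pointwise derivative bounds $\lambda_1\alpha(t)/t\le\alpha'(t)\le\lambda_2\alpha(t)/t$ (Step~2) --- note that quasimultiplicativity alone does \emph{not} control $\alpha'$ or $\beta'$ either, so this regularization is necessary --- the integral is handled by integrating by parts with $A(t)=\int_0^t\alpha(s)/s\,ds$. This moves the differentiation off $f$ and onto the regularized $\beta$: the resulting integral involves $\beta'(f(t))\,df(t)$, and the change of variables $z=1/f(t)$ is then a genuine Stieltjes substitution $df(t)=d(1/z)=-z^{-2}\,dz$, valid with no differentiability assumption on $f$. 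The derivative that does appear, $\beta'(1/z)$, is the one that has been regularized, and there the estimate $\beta'(1/z)\asymp z\beta(1/z)=z/\beta(z)$ is available. If you want to keep your cleaner substitution outline, you would need to replace (iv) by this integration-by-parts / Stieltjes device (and add the linearization step), since controlling the Jacobian directly is not possible under the stated hypotheses.
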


\begin{proof}

Note that the functions $\alpha(T)$ and $\beta(T)$ are also quasimultiplicative (Observation \ref{quasiproperties}), and let $c_1, c_2$ and $M$ be the constants from the definition of quasimultiplicativity which work for both $\alpha$ and $\beta$.

{\bf Step 1: Linearizing the functions.} We will introduce the functions $\widehat{\alpha}$ and $\widehat{\beta}$, defined by 
    \begin{equation}\label{star definition}
    \widehat{\alpha}(T) = \frac{M^{k+1} - T}{M^{k+1} - M^k} \alpha(M^k) + \frac{T - M^k}{M^{k+1} - M^k} \alpha (M^{k+1}) \,\,\,\,\,\, \text{for} \,\, M^k \leq T \leq M^{k+1},
    \end{equation}
    $$
    \widehat{\beta}(T) = \frac{M^{k+1} - T}{M^{k+1} - M^k} \beta(M^k) + \frac{T - M^k}{M^{k+1} - M^k} \beta(M^{k+1}) \,\,\,\,\,\, \text{for} \,\, M^k \leq T \leq M^{k+1},
    $$

    that is, piecewise linear functions which are equal to $\alpha$ or $\beta$ at powers of $M$. By quasimultiplicativity of $\alpha$ and $\beta$, for large enough or small enough values of $T$ one has

    $$
    \frac{1}{c_2} \widehat{\alpha}(T) \leq \alpha(T) \leq c_2 \widehat{\alpha}(T) \,\,\,\,\,\,\,\, \text{and} \,\,\,\,\,\,\,\, \frac{1}{c_2} \widehat{\beta}(T) \leq \beta(T) \leq c_2 \widehat{\beta}(T),
    $$

    which immediately implies that the first series in \eqref{equivalence} converges without stars if and only if it converges with stars, and same holds for the second series. Thus it is enough to prove Lemma \ref{series equivalent} for $\alpha = \widehat{\alpha}$ and $\beta = \widehat{\beta}$.

    \vskip+0.3cm

    {\bf Step 2: One property of hat functions.}

    For the remainder of the proof, let us assume that $\alpha = \widehat{\alpha}$ and $\beta = \widehat{\beta}$. We will prove that there exist such positive absolute constants $\lambda_1, \lambda_2$ (which depend on $c_1, c_2$ and $M$ only) that 

    \begin{equation}\label{derivative behavior}
        \lambda_1 \frac{\alpha(t)}{t} \leq \alpha'(t) \leq \lambda_2 \frac{\alpha(t)}{t} \,\,\,\,\,\,\,\,\,\,\,\, \text{and} \,\,\,\,\,\,\,\,\,\,\,\, \lambda_1 \frac{\beta(t)}{t} \leq \beta'(t) \leq \lambda_2 \frac{\beta(t)}{t}
    \end{equation}

for any $t$ large enough and small enough (except the powers of $M$, where $\alpha'$ and $\beta'$ are not defined).

We will show \eqref{derivative behavior} for $\alpha$ and large enough $t$; the proof for small values of $t$ and for the function $\beta$ is similar.

By quasimultiplicativity, for any $k$ large enough one has 

$$
c_1 \alpha(M^k) \leq \alpha(M^{k+1}) \leq c_2 \alpha(M^k);
$$

let us consider $t \in (M^k; M^{k+1})$, and let $\xi: = \frac{\alpha(M^{k+1})}{\alpha(M^k)} \in [c_1; c_2]$. Then, by \eqref{star definition}, 

$$
\alpha'(t) = \frac{\alpha(M^k)}{(M-1)M^k} (\xi - 1)
$$

and 

$$
\frac{\alpha(t)}{t} = \frac{\alpha(M^k)}{(M-1)M^k} \left( (\xi - 1)  + \frac{1}{t} (M^{k+1} - \xi M^k ) \right) = \alpha'(t) \cdot \left( 1 + \frac{M^k}{t} \frac{M - \xi}{\xi - 1} \right),
$$

where for $t > M^k$ one has $\frac{M-1}{c_2 - 1} < 1 + \frac{M^k}{t} \frac{M - \xi}{\xi - 1} < \frac{M-1}{c_1 - 1}$. Taking $\lambda_1 = \frac{c_1-1}{M-1}$ and $\lambda_2 = \frac{c_2 - 1}{M-1}$ provides the desired statement.

\vskip+0.3cm

We will also use one simple corollary of these inequalities: if we set $A(t) = \int\limits_0^{t} \frac{\alpha(s)}{s} ds$, then

\begin{equation}
    \frac{1}{2} \lambda_1 A(t) \leq \alpha(t) \leq 2 \lambda_2 A(t).
\end{equation}

for $t$ large enough.

    \vskip+0.3cm

    {\bf Step 3: Integral equivalence.}

    Let us use the variable change $t = \frac{1}{g(z)} = f^{-1} \left( \frac{1}{z} \right)$, equivalently, $z = \frac{1}{f(t)}$, to show that the integrals

    {\centering\compress
\begin{tabularx}{\linewidth}{>{\leqnomode}XX}
\begin{equation}\label{int1}
       \int\limits_1^{+\infty} \frac{dz}{z \cdot \beta(z) \cdot  \alpha(g(z))} \,\,\,\,\,\,\,\,\,\,\,\,\,\,\,\,\,\,\,\,\,\,\,\,\,\,\text{and}
\end{equation}
 &
 \begin{equation}\label{int2}
\int\limits_1^{+\infty} t^{-1} \beta(f(t)) \cdot \alpha(t) dt
\end{equation}
\end{tabularx} \vspace{-\baselineskip}}

    either both converge or both diverge; together with Step 1, this will complete the proof of the lemma. It will be convenient for us to use the symbol "$\asymp$" for equivalence up to an additive constant (we will use it to simply write integrals "from 1 to infinity" instead of "from $t_0$ to infinity") or up to a multiplucative absolute constant (depending on $\lambda_1$ or $\lambda_2$ only).

    One has:

    $$
    \int\limits_1^{+\infty} t^{-1}  \beta(f(t)) \cdot \alpha(t) dt = \sum\limits_{k=0}^{\infty} \int\limits_{M^k}^{M^{k+1}} \beta(f(t)) \cdot \frac{ \alpha(t)}{t} dt = 
    $$
    $$
    \sum\limits_{k=0}^{\infty} \left( A(t) \beta(f(t))\rvert_{t = M^k}^{t = M^{k+1}} - \int\limits_{t = M^k}^{t = M^{k+1}} A(t) \cdot \beta'(f(t)) d f(t)
 \right) \asymp
    $$
    $$
    \sum\limits_{k=0}^{\infty} \left( A(t) \beta(f(t))\rvert_{t = M^k}^{t = M^{k+1}} - \int\limits_{z = \frac{1}{f(M^{k})}}^{z = \frac{1}{f(M^{k+1})}} A\left(\frac{1}{g(z)}\right) \cdot \beta'\left(\frac{1}{z} \right) d \left( \frac{1}{z} \right)
 \right) =
    $$
    $$
    A(t) \beta(f(t))\rvert_{t = 1}^{t = + \infty} + \sum\limits_{k=0}^{\infty} \int\limits_{z = \frac{1}{f(M^{k})}}^{z = \frac{1}{f(M^{k+1})}} A\left(\frac{1}{g(z)}\right) \cdot \beta'\left(\frac{1}{z} \right) \frac{1}{z^2} d z \asymp
    $$
    $$
    \lim\limits_{t \rightarrow \infty } \beta(f(t)) \cdot \alpha(t) + \sum\limits_{k=0}^{\infty} \int\limits_{z = \frac{1}{f(M^{k})}}^{z = \frac{1}{f(M^{k+1})}} \alpha \left( \frac{1}{g(z)} \right) \cdot \beta \left(\frac{1}{z} \right)  \cdot \frac{z}{z^2} dz \asymp
    $$
    $$
    \lim\limits_{t \rightarrow \infty } \beta(f(t)) \cdot \alpha(t) + \int\limits_{1}^{\infty} \frac{dz}{z \cdot \beta(z) \cdot  \alpha(g(z))},
    $$

 where both terms are nonnegative.

    Thus, if the integral \eqref{int2} converges, then \eqref{int1} also converges.   Conversely: suppose \eqref{int1} converges. Let us notice that 
    
    $$
    \frac{1}{\beta(z) \cdot  \alpha(g(z))} = \beta(f(t)) \cdot \alpha(t),
    $$

    where $t$ and $z$ are related as above. Therefore, the convergence of \eqref{int1} implies that $\lim\limits_{t \rightarrow \infty} \beta(f(t)) \alpha(t) = 0$, and thus \eqref{int2} also has to converge.

    \vskip+0.3cm
    
This observation concludes the proof of Step 3 and therefore of lemma.

\end{proof}

    Let us also notice that multiplication of $f$ or $g$ by any constant and linear variable change (due to quasimultiplicativity) do not change the convergence properties of the corresponding series. Fix $\varepsilon$ small enough, so that Theorem \ref{weighted_impr} holds. We will now prove Theorem \ref{main for pairs} for continuous and strictly decreasing functions $g$.

    \begin{itemize}
        \item Suppose \eqref{kwseries} diverges; by Lemma \ref{series equivalent} it implies that \eqref{khgr series} also diverges. Then, by Theorem \ref{general_Khgr}, for almost every $\Theta \in M_{n,m}$, its transposed matrix $\Theta^{\top} \in M_{m,n}$ belongs to $\bigcap\limits_{\delta > 0 }\bigcap\limits_{C > 0 }{ \bf W}_{m, n}\left[\delta f (C T); \betf, \alf \right]$. Therefore, by Theorem \ref{weighted_impr}, for almost every $\Theta \in M_{n,m}$ one has 
        $$
        m \left( \bigcup\limits_{\delta > 0 }\bigcup\limits_{C > 0 } \widehat{ \bf D}_{n,m}^{\Theta} \left[ C\tilde{g}(\delta T); \alf, \betf \right] \right) = 0,
        $$

        where $C \tilde{g}(\delta T)  = C \varepsilon \cdot g \left( \delta \cdot \frac{1}{\varepsilon}  T \right)$. Taking 

        $$
        \delta = \varepsilon \,\,\,\,\,\,\,\,\,\,\,\, \text{and} \,\,\,\,\,\,\,\,\,\,\,\, C =\frac{1}{\varepsilon},
        $$

        we see that $C \tilde{g}(\delta T) = g(T)$ and deduce that for almost every $\Theta \in M_{n,m}$, 
        $$
        m \left( \widehat{ \bf D}_{n,m}^{\Theta} \left[ g; \alf, \betf \right]  \right) = 0.
        $$

        The set $\widehat{ \bf D}_{n,m} \left[ g; \alf, \betf \right]$ is Borel and thus measurable. Fubini's theorem applied to the characteristic function of this set shows that

        $$
        m \left( \widehat{ \bf D}_{n,m} \left[ g; \alf, \betf  \right]  \right) = 0.
        $$

        \item Suppose \eqref{kwseries} converges; by Lemma \ref{series equivalent} then \eqref{khgr series} also converges. By Theorem \ref{general_Khgr}, we have 
        $$
        m \left( { \bf W}_{m, n}\left[f; \frac{1}{C} \betf, \frac{1}{C}\alf \right] \right) = 0, 
        $$

        where $C$ is as in Theorem \ref{Jarnik_general}.
        By Theorem \ref{Jarnik_general}, for almost every $\Theta \in M_{n,m}$

        $$
        \widehat{\bf D}_{n,m}^{\Theta} \left[ g; \alf, \betf \right] = \mathbb{R}^n.
        $$

        Applying Fubini's Theorem again completes the proof.
    \end{itemize}

    \vskip+0.3cm

    It remains to show that the statement of theorem holds for all non-increasing functions $g$, without continuity or strict monotonicity assumptions. We will use two observations to achieve that.

    \begin{obs}\label{remove_continuity}
        Let $g: \,\,\, \mathbb{R}_+ \rightarrow \mathbb{R}_+$ be a non-increasing function. There exists such a continuous, non-increasing function $h: \,\,\, \mathbb{R}_+ \rightarrow \mathbb{R}_+$ that

        \begin{equation}\label{converge_together}
        \sum\limits_{l=1}^{\infty} \frac{1}{l \cdot \beta(l) \cdot \alpha(g(l))} < \infty \,\,\,\,\,\,\,\, \text{if and only if} \,\,\,\,\,\,\,\, \sum\limits_{l=1}^{\infty} \frac{1}{l \cdot \beta(l) \cdot \alpha(h(l))} < \infty
        \end{equation}

        and 

        \begin{equation}\label{approx_together}
        (\Theta, \pmb{\eta}) \in \widehat{\bf D}_{n,m}[\ug; \alf, \betf]  \,\,\,\,\,\,\,\, \text{if and only if} \,\,\,\,\,\,\,\,  (\Theta, \pmb{\eta}) \in \widehat{\bf D}_{n,m}[h; \alf, \betf].
        \end{equation}
    \end{obs}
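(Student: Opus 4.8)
The plan is to reduce $g$ first to its right-continuous regularization and then to smooth out the remaining jumps over short, well-separated intervals. If $g$ is already continuous we simply take $h=g$, so assume $g$ has (at most countably many) jump discontinuities.

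\emph{Passing to the right-continuous version.} For a fixed pair $(\Theta,\pmb\eta)$ consider
$$\varphi_{\Theta,\pmb\eta}(T):=\min\bigl\{\,||\Theta\q-\pmb\eta||_{\alf}\ :\ \q\in\mathbb Z^{m}\setminus\{0\},\ |\q|_{\betf}\le T\,\bigr\},$$
a non-increasing function of $T\ge 1$. I would first check that $\varphi_{\Theta,\pmb\eta}$ is right-continuous, indeed locally constant to the right: in any bounded range of $T$ only finitely many new vectors $\q$ become admissible, and none does so in a sufficiently small right-neighbourhood of a given $T$. Hence $(\Theta,\pmb\eta)\in\widehat{\bf D}_{n,m}[\ug;\alf,\betf]$ iff $\varphi_{\Theta,\pmb\eta}(T)\le g(T)$ for all large $T$, and, letting $s$ decrease to $T$ in this inequality, this is equivalent to $\varphi_{\Theta,\pmb\eta}(T)\le g^{+}(T)$ for all large $T$, where $g^{+}(T):=\lim_{s\downarrow T}g(s)$; so the $\ug$- and $g^{+}$-Dirichlet sets coincide. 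For the series, $g^{+}\le g$ gives one implication at once, while $g(l+1)\le g^{+}(l)\le g(l)$ together with quasimultiplicativity of $\alpha$ and $\beta$ (Observation \ref{quasiproperties}), which absorbs the shift $l\mapsto l+1$, gives the other; thus the series for $g$ and for $g^{+}$ converge together, and we may assume $g$ is right-continuous.

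\emph{Smoothing the jumps.} A right-continuous non-increasing function has, on each bounded interval and for each $\varepsilon>0$, only finitely many jumps of size $\ge\varepsilon$. I would modify $g$ only near its jump points: around a jump at $d$ (from $g(d^{-})$ down to $g(d)$) replace $g$ on a short interval ending at $d$ by a continuous decreasing ramp joining $g(d^{-})$ to $g(d)$, choosing the lengths of these intervals so small that they are pairwise disjoint, are tiny relative to their location (contained in $[d(1-\eta_{d}),d]$ with $\sum\eta_{d}<\infty$ and $\eta_{d}\to0$), and, in particular, contain no large integer. Call the resulting function $h$: it is continuous and non-increasing, and since $h(l)=g(l)$ for all large $l$, \eqref{converge_together} holds.

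\emph{The Dirichlet equivalence, and the main obstacle.} It remains to establish \eqref{approx_together}. Since $h=g$ off the modified intervals, the conditions ``$\varphi_{\Theta,\pmb\eta}\le g$ eventually'' and ``$\varphi_{\Theta,\pmb\eta}\le h$ eventually'' can differ only through what happens on those intervals; there the non-increasing $\varphi_{\Theta,\pmb\eta}$ is trapped between its right-continuous values at the two endpoints, which are values of $g$, and I would combine this with the right-continuity from the first step to conclude that the two ``eventually $\le$'' conditions coincide. This last point is the crux: a continuous $h$ cannot literally equal a discontinuous $g$, so one must argue — using monotonicity and right-continuity of $\varphi_{\Theta,\pmb\eta}$ rather than any pointwise comparison of $h$ with $g$ — that replacing the cliff of $g$ at each jump by a short continuous ramp neither creates nor destroys Dirichlet pairs. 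Arranging the ramps so that this holds (short enough, on the correct side of each jump, compatibly over all jumps at once) is where I expect essentially all of the work to lie; no single estimate is hard, but the placement of the ramps and the case analysis near the jumps must be carried out with care.
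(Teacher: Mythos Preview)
Your step 1 (passage to the right-continuous regularisation $g^{+}$) is fine. The gap is in step 2. You propose to smooth each jump of $g^{+}$ over a short interval ending at the jump point, with these intervals pairwise disjoint; but the jump set of a monotone function can be an arbitrary countable set, in particular dense --- for instance a function that drops by $2^{-n}$ at the $n$-th rational in $[1,2]$ under some enumeration. In such a case no system of pairwise disjoint smoothing intervals exists, and your construction cannot even get started before you reach the ``crux'' you identify in step 3.

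The paper's route is far shorter and sidesteps this entirely by pushing further the very observation you already make in step 1. You note that $\varphi_{\Theta,\pmb\eta}$ is locally constant to the right because only finitely many new $\q$ become admissible on any bounded range; in fact $\varphi_{\Theta,\pmb\eta}$ is a step function whose jumps all lie in the locally finite set $W':=\bigcup_{j}\beta_j^{-1}(\mathbb Z_+)$, since a new $\q\in\mathbb Z^m$ becomes admissible exactly when some $\beta_j(T)$ crosses an integer. Hence the Dirichlet condition depends on $g$ only through its values on $W'$, and the series only through its values on $\mathbb Z_+$. The paper therefore takes $W:=\mathbb Z_+\cup W'$, sets $h:=g$ on this locally finite set, and interpolates linearly in between. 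This produces a continuous non-increasing $h$ in one line, with \eqref{converge_together} immediate (since $h(l)=g(l)$ for every positive integer $l$) and \eqref{approx_together} following from the step-function structure of $\varphi_{\Theta,\pmb\eta}$. You had the right ingredient but used it only to extract right-continuity, rather than to identify the discrete set on which the whole problem lives.
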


    \begin{proof}
        Functions $\beta_j$ are strictly increasing, therefore the set $W: = \mathbb{Z}_+ \cup \bigcup\limits_{j=1}^{m} \beta_j^{-1}(\mathbb{Z}_+)$ is locally finite. By definition of the sets ${\bf D}_{n,m}$, it is enough for the values of $g$ and $h$ to coincide on the set $W$ to guarantee that \eqref{approx_together} and \eqref{converge_together} hold. We define $h(T): = g(T)$ for $T \in W$ and extend it linearly between any two subsequent points of $W$.
    \end{proof}

    \begin{obs}\label{remove_decreasing}
        Let $h: \,\,\, \mathbb{R}_+ \rightarrow \mathbb{R}_+$ be a continuous, non-increasing function. There exist such continuous, strictly decreasing functions $\mathfrak{h}_-$ and $\mathfrak{h}_+$ that 

        \begin{equation}\label{inequality_h}
        \mathfrak{h}_-(T) \leq h(T) \leq \mathfrak{h}_+(T) \,\,\,\,\,\,\,\,\,\,\,\, \text{for any} \,\,\,\,T \in \mathbb{R}_+
        \end{equation}

        and 

    \begin{equation}\label{hconverge_together}
        \sum\limits_{l=1}^{\infty} \frac{1}{l \cdot \beta(l) \cdot \alpha(\mathfrak{h}_-(l))} < \infty \,\,\,\,\,\,\,\, \text{if and only if} \,\,\,\,\,\,\,\, \sum\limits_{l=1}^{\infty} \frac{1}{l \cdot \beta(l) \cdot \alpha(\mathfrak{h}_+(l))} < \infty.
        \end{equation}

    \end{obs}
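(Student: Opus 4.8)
\medskip
\noindent\textbf{Proof plan (Observation \ref{remove_decreasing}).} The plan is to sandwich $h$ between two mild \emph{multiplicative} perturbations of itself. Concretely, set
\[
\mathfrak{h}_+(T) := \bigl(1 + 2^{-T}\bigr)\, h(T) \qquad\text{and}\qquad \mathfrak{h}_-(T) := \Bigl(\tfrac12 + \tfrac14\cdot 2^{-T}\Bigr)\, h(T).
\]
Continuity is immediate, and positivity holds because $h>0$ and both coefficients are positive. Each of $\mathfrak{h}_\pm$ is the product of a \emph{strictly decreasing} positive function and the non-increasing positive function $h$, so for $T_1<T_2$ one gets, e.g.,
\[
\mathfrak{h}_+(T_1) = (1+2^{-T_1})h(T_1) \ge (1+2^{-T_1})h(T_2) > (1+2^{-T_2})h(T_2) = \mathfrak{h}_+(T_2),
\]
and likewise for $\mathfrak{h}_-$; hence both functions are strictly decreasing. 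Since $\tfrac12 < \tfrac12 + \tfrac14\cdot 2^{-T} < 1 < 1 + 2^{-T}$ for every $T$, we obtain the chain $\tfrac12 h(T) \le \mathfrak{h}_-(T) \le h(T) \le \mathfrak{h}_+(T) \le 2h(T)$, which in particular gives \eqref{inequality_h}.

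For the series equivalence \eqref{hconverge_together}, recall (Observation \ref{quasiproperties}, and $\alpha_i(T)=1/\alpha_i(1/T)$ from \eqref{star_condition}) that $\alpha = \prod_{i=1}^n \alpha_i$ is increasing, quasimultiplicative, and satisfies $\alpha(T) = 1/\alpha(1/T)$. Consequently, for any fixed $c>0$ there is a constant $C_c$ with $C_c^{-1}\alpha(x)\le \alpha(cx)\le C_c\,\alpha(x)$ for \emph{all} $x\in\mathbb{R}_+$: for large $x$ this is \eqref{quasireformulation} (with a bootstrap through powers, using monotonicity, if the ``$R$ large enough'' threshold there exceeds $c$), and for small $x$ it follows by applying the large-argument case to $1/x$ and using the symmetry relation. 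Applying this with $c\in\{\tfrac12,2\}$ and using monotonicity of $\alpha$ together with $\tfrac12 h(l)\le \mathfrak{h}_-(l)\le h(l)\le \mathfrak{h}_+(l)\le 2h(l)$ yields $\alpha(\mathfrak{h}_-(l)) \asymp \alpha(h(l)) \asymp \alpha(\mathfrak{h}_+(l))$ with implied constants independent of $l$. Thus the general terms of the two series in \eqref{hconverge_together} are comparable up to an absolute factor, and they converge or diverge together by comparison.

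Together with Observation \ref{remove_continuity} this is exactly what is needed: the set $\widehat{\bf D}_{n,m}[\ug;\alf,\betf]$ is monotone in $\ug$ (a pointwise smaller $\ug$ is a more restrictive condition), so $\widehat{\bf D}_{n,m}[\mathfrak{h}_-;\alf,\betf]\subseteq \widehat{\bf D}_{n,m}[h;\alf,\betf] = \widehat{\bf D}_{n,m}[\ug;\alf,\betf]\subseteq \widehat{\bf D}_{n,m}[\mathfrak{h}_+;\alf,\betf]$, while by the computation above the series \eqref{kwseries} for $\mathfrak{h}_+$, for $\mathfrak{h}_-$, and for $\ug$ all share the same convergence behaviour; hence the already-established continuous strictly decreasing case of Theorem \ref{main for pairs}, applied to $\mathfrak{h}_+$ in the divergence case and to $\mathfrak{h}_-$ in the convergence case, transfers to arbitrary non-increasing $\ug$.

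The only genuinely delicate step is the construction of the \emph{lower} function $\mathfrak{h}_-$: a subtractive perturbation $h-\phi$ risks losing positivity, and multiplying or dividing $h$ by an increasing factor may destroy monotonicity, since the product of a non-increasing function with an increasing one need not be monotone. Using a strictly decreasing multiplicative coefficient that stays bounded away from both $0$ and $1$ is precisely what makes strict monotonicity, positivity, the sandwich inequality, and the quasimultiplicative comparison hold simultaneously.
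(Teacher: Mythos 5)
Your proposal is correct, and it uses a genuinely different (and arguably cleaner) construction than the paper's own proof. The paper takes the piecewise linear interpolation $\mathfrak{h}$ of $h$ at integer points, defines $\mathfrak{h}_\pm(T)=\mathfrak{h}(T\mp 1)$, and then modifies these functions by hand on each maximal interval where $\mathfrak{h}$ is constant in order to regain strict monotonicity --- a somewhat fiddly case-by-case adjustment. Your multiplicative perturbation $\mathfrak{h}_\pm(T)=c_\pm(T)\,h(T)$, with strictly decreasing positive coefficients $c_\pm$ bounded away from $0$ and $\infty$, avoids all of that: strict monotonicity comes for free from the observation that a strictly decreasing positive factor times a non-increasing positive function is strictly decreasing (this needs $h>0$, which is implicit in $h\colon\mathbb{R}_+\to\mathbb{R}_+$; both proofs rely on this), and both the sandwich \eqref{inequality_h} and the comparison \eqref{hconverge_together} follow directly from $\mathfrak{h}_\pm(l)\asymp h(l)$ with uniform constants together with quasimultiplicativity of $\alpha$. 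A further mild advantage of your route is that it perturbs only the \emph{values} of $h$, not its argument; after the change of variable $l\mapsto l\pm1$ the paper's argument must additionally invoke quasimultiplicativity of $\beta$ and the comparison $\beta(l\pm1)\asymp\beta(l)$, whereas your argument only uses quasimultiplicativity of $\alpha$. The bootstrap remark about extending \eqref{quasireformulation} to arbitrary fixed ratios $c$ is fine and could be made fully explicit in one line, but it is a standard consequence of monotonicity plus \eqref{quasireformulation} and does not affect correctness.
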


    \begin{proof}
    It is clear that the statement is true when $H: = \lim\limits_{T \rightarrow \infty} h(T) > 0$: in this case we can take any two continuous strictly decreasing functions with the condition $ \lim\limits_{T \rightarrow \infty} h_-(T) \leq H \leq  \lim\limits_{T \rightarrow \infty} h_+(T)$ (and then adjust on a bounded set if necessary). Assume now that $\lim\limits_{T \rightarrow \infty} h(T) = 0$.
    
        Let $\mathfrak{h}(T)$ be a piecewise linear non-increasing function which values coincide with the values of $h$ on integer values of the argument. Then the functions 

        \begin{equation}\label{shifted_ineq}
        \mathfrak{h}_-(T): = \mathfrak{h}(T+1)  \leq h(T) \leq  \mathfrak{h}_+(T): = \mathfrak{h}(T-1)
        \end{equation}

        satisfy \eqref{inequality_h}. 

        Now consider such positive integers $l$ and $k$ (if they exist) that 

        $$
        h(l-1) > h(l)  = \ldots = h(l+k) > h(l+k+1).
        $$

        We modify the function $\mathfrak{h}_-$ on the segment $[l-1, l+k]$ as follows: we set $\mathfrak{h}_-(l+k-1) : =\frac{1}{2} \left( {h}(l+k) + h(l+k+1) \right)$, do not change the values at endpoints and extend $\mathfrak{h}_-$ linearly on $[l-1, l+k-1]$ and $[l+k-1, l+k]$. This guarantees that $\mathfrak{h}_-(T)$ is strictly decreasing and 
        
        \begin{equation}\label{h-ineq}
        \frac{1}{2} \mathfrak{h}(T+1) \leq \mathfrak{h}_-(T) \leq \mathfrak{h}(T+1)
        \end{equation}
        
        for $T \in [l-1, l+k]$. We apply these procedure to each such segment where $\mathfrak{h}_-$ is constant.

        We apply a similar procedure to $\mathfrak{h}_+$ on the segment $[l, l+k+1]$, setting $\mathfrak{h}_+(l+1) : =\frac{1}{2} \left( {h}(l-1) + h(l) \right)$; we obtain a strictly increasing function $\mathfrak{h}_+$ for which the condition 
        
        \begin{equation}\label{h+ineq}
        \mathfrak{h}(T-1) \leq \mathfrak{h}_+(T) \leq 2\mathfrak{h}(T-1)
        \end{equation}
        
        holds. 

        The conditions \eqref{shifted_ineq}, \eqref{h-ineq} and $\eqref{h+ineq}$ imply \eqref{inequality_h}. The change of variable $l \mapsto l-1$ or $l \mapsto l+1$ in the sums \eqref{hconverge_together} together with the observation that $$\beta(T-1) \asymp \beta(T) \asymp \beta(T+1) \,\,\,\,\,\,\,\, \text{and} \,\,\,\,\,\,\,\, \alpha\left(2h(T)\right) \asymp \alpha\left(h(T)\right) \asymp \alpha\left(\frac{1}{2} h(T)\right)$$
        
        by quasimultiplicativity and the inequalities above implies that both sums in \eqref{hconverge_together} converge or diverge simultaneously with the sum

        $$
         \sum\limits_{l=1}^{\infty} \frac{1}{l \cdot \beta(l) \cdot \alpha({h}(l))}.
        $$
    \end{proof}

    {\bf Proof of Theorem \ref{main for pairs}.} Suppose $g: \,\,\, \mathbb{R}_+ \rightarrow \mathbb{R}_+$ is a non-increasing function, and the series \eqref{kwseries} converges. Construct a continuous function $h$ using Observation \ref{remove_continuity} and then a continuous strictly decreasing function $\mathfrak{h}_-$ based on this $h$ using Observation \ref{remove_decreasing}. The series 

    $$
    \sum\limits_{l=1}^{\infty} \frac{1}{l \cdot \beta(l) \cdot \alpha(\mathfrak{h}_-(l))}
    $$

    converges, and so, as shown above, the set  $\widehat{\bf D}_{n,m}[\mathfrak{h}_-; \alf, \betf]$ has full Lebesgue measure. By Observations \ref{remove_decreasing} and then \ref{remove_continuity}, 
    $$\widehat{\bf D}_{n,m}[\mathfrak{h}_-; \alf, \betf] \subseteq \widehat{\bf D}_{n,m}[h; \alf, \betf]  = \widehat{\bf D}_{n,m}[g; \alf, \betf]
    $$

    and thus the set $\widehat{\bf D}_{n,m}[g; \alf, \betf]$ has full Lebesgue measure.

    \vskip+0.2cm

    In the case when the series \eqref{kwseries} diverges, we apply a similar argument using the function $\mathfrak{h}_+$.

\vskip+0.3cm

\begin{remark}
    Using Lemma \ref{series equivalent}, we can similarly deduce Theorem \ref{general_Khgr} from Theorem \ref{main for pairs} through Theorem \ref{Jarnik_general} and Theorem \ref{weighted_impr}. Indeed: Theorem \ref{main for pairs} tells us whether the measure of almost all sets $\widehat{\bf D}_{n,m}^{\Theta} \left[ g; \alf, \betf \right]$ is full or zero. Knowing this for a fixed $\Theta$ (and choosing the correct constants in front of or inside the functions if necessary), one can use Theorem \ref{Jarnik_general} and Theorem \ref{weighted_impr} to establish whether $\Theta$ belongs to ${ \bf W}_{m, n}\left[f; \betf, \alf \right]$ or not.
\end{remark}

\section*{Acknowledgements}

The author is deeply grateful to Dmitry Kleinbock for insightful discussions and invaluable comments. Sincere thanks are also due to Nikolay Moshchevitin for his generous support, and to Oleg German for kindly providing helpful references.


\begin{thebibliography}{1}


\bibitem{BSV25} V. Beresnevich, D. Simmons, S. Velani, {\it Twisted Diophantine approximation on manifolds}, Preprint available at arXiv:2507.04405 (2025)

\bibitem{cas} J. W. S. Cassels, { \it An Introduction to the Geometry of Numbers}, Springer Verlag (1997).

\bibitem{CGGMS20} S. Chow, A. Ghosh, L. Guan, A. Marnat, D. Simmons, { \it Diophantine transference inequalities: weighted, inhomogeneous, and intermediate exponents}, ANNALI SCUOLA NORMALE SUPERIORE - CLASSE DI SCIENZE, 21b (special, 1-2), 643-671 (2020).

\bibitem{Dani} S. G. Dani, { \it Divergent trajectories of flows on homogeneous spaces and Diophantine approximation}, J. Reine Angew. Math. 359, 55–89 (1985)


\bibitem{DS1} H. Davenport, W.M. Schmidt, { \it Dirichlet’s theorem on diophantine approximation}, in: Symposia Mathematica, Vol. IV (INDAM, Rome,
1968/69), pp. 113–132, 1970.

\bibitem{DS2} H. Davenport, W.M. Schmidt, { \it Dirichlet’s theorem on diophantine approximation II}, Acta Arith. 16 (1969/1970) 413--424.

\bibitem{Evertse} J.-H. Evertse, {\it Mahler’s work on the geometry of numbers}, Doc.Math., Extra Vol., Mahler
Selecta (2019), 29-–43.

\bibitem{keith}
L. Fishman, D. Lambert, K. Merrill, D. Simmons, {\it Diophantine approximation on abelian varieties; a conjecture of M. Waldschmidt}, Preprint available at arXiv:2506.19060 (2025)

\bibitem{G20}
O. N. German, { \it Transference theorems for diophantine approximation with weights}, Mathematika 66 (2020), 325–-342

\bibitem{GK15}
O. N. German, K. G. Evdokimov, { \it A strengthening of
Mahler’s transference theorem},
Izvestiya: Mathematics, Volume 79, Issue 1 (2015), 60-–73



\bibitem{G38} 
A. V. Groshev, { \it Une théor\`{e}me sur les syst\`{e}mes des formes linéaires}, Dokl. Akad. Nauk SSSR 9 (1938), 151-–152.

\bibitem{Ha12} S. Harrap, { \it Twisted inhomogeneous Diophantine approximation and badly approximable sets}, Acta Arith., 151(1) (2012), 55-82

\bibitem{He25} Y. He, { \it Hausdorff measure of sets of inhomogeneous Dirichlet non-improvable affine forms with weights},  Preprint available at arXiv:2505.16473 (2025)

\bibitem{HW24} M. Hussain, B. Ward, { \it Weighted twisted inhomogeneous diophantine approximation}, Nonlinearity, 37(12):Paper No. 125004, 18 (2024).

\bibitem{j39}
V. Jarn\'{\i}k,  { \it
Remarque à l'article precedent de M. Mahler}, 
Časopis pro pěstování matematiky a fysiky,  68:3-4 (1939), 103-111.

\bibitem{j41}
V. Jarn\'{\i}k, {\it O lineárních nehomogenních diofantických aproximacích}, 
Rozpravy II. Třídy Čes. akademie LI (1941), No. 29, 21 p.

\bibitem{j46}
V. Jarn\'{\i}k, {\it
Sur les approximations diophantiques linéaires non homogénes}, 
Bull. internat. de l’Académie des sciences de Bohême 1946, 145-160

\bibitem{khint} A.Ya.Khintchine, { \it Uber eine Klasse linearer Diophantischer Approximationen}, Rend. Sirc. Mat. Palermo, 50 (1926), 170–-195.

\bibitem{kimkim}
T. Kim, W. Kim, { \it Hausdorff measure of sets of Dirichlet non-improvable affine forms}, Advances in Mathematics 403 (2022)

\bibitem{KMWW} D. Kleinbock, N. Moshchevitin, J. Warren, B. Weiss,  { \it Singularity, weighted uniform
approximation, intersections and rates}, Preprint available at arXiv:2409.15607 (2024)

\bibitem{K98} D. Kleinbock, { \it Flows on homogeneous spaces and Diophantine properties of matrices}, Duke Math. J. 95 (1998), no. 1, 107–124.

\bibitem{KM99} D. Kleinbock, G. A. Margulis, { \it Logarithm laws for flows on homogeneous spaces}, Invent. Math. 138 (1999), 451–-494.

\bibitem{kwad}
D. Kleinbock, N. Wadleigh, { \it An inhomogeneous Dirichlet theorem
via shrinking targets}, Compositio Math. 155 (2019), 1402-–1423.

\bibitem{KW23}
D. Kleinbock, B. Wang, { \it Measure theoretic laws for limsup sets defined
by rectangles}, Advances in Mathematics 428 (2023)

\bibitem{MN25} N. Moshchevitin, V. Neckrasov, {\it Metric theory of inhomogeneous Diophantine approximations with a fixed matrix}, Preprint available at arXiv:2503.21180 (2025)

\bibitem{schmidtbook} W. M. Schmidt, { \it Diophantine Approximation}. Lecture Notes in Math., 785,
Springer-Verlag (1980).
 
\bibitem{Spr}
V. G. Sprindzhuk, {\it Metric theory of diophantine approximations}, Scripta series in mathematics, 1979.

\end{thebibliography}
 \end{document}